\definecolor{shadecolor}{gray}{0.95}
\declaretheoremstyle[
headfont=\normalfont\bfseries,
notefont=\mdseries, notebraces={(}{)},
bodyfont=\normalfont,
postheadspace=0.5em,
spaceabove=1pt,
mdframed={
  skipabove=8pt,
  skipbelow=8pt,
  hidealllines=true,
  backgroundcolor={shadecolor},
  innerleftmargin=4pt,
  innerrightmargin=4pt}
]{shaded}
\declaretheorem[style=shaded,within=section]{definition}
\declaretheorem[style=shaded,sibling=definition]{theorem}
\declaretheorem[style=shaded,sibling=definition]{assumption}
\declaretheorem[style=shaded,sibling=definition]{corollary}
\declaretheorem[style=shaded,sibling=definition]{lemma}
\newcommand{\R}{\mathbb{R}} % Reals
\newcommand{\cB}{{\cal B}}
\newcommand{\cD}{{\cal D}}
\newcommand{\cF}{{\cal F}}
\newcommand{\cQ}{{\cal Q}}
\newcommand{\bI}{{\bf I}}
\newcommand{\bS}{{\bf S}}
\newcommand{\mA}{{\bf A}}
\newcommand{\mB}{{\bf B}}
\newcommand{\mD}{{\bf D}}
\newcommand{\mG}{{\bf G}}
\newcommand{\mH}{{\bf H}}
\newcommand{\mI}{{\bf I}}
\newcommand{\mR}{{\bf R}}
\newcommand{\mS}{{\bf S}}
\newcommand{\mU}{{\bf U}}
\newcommand{\mV}{{\bf V}}
\newcommand{\mW}{{\bf W}}
\newcommand{\mZ}{{\bf Z}}
\newcommand{\eqdef}{\coloneqq} 
\newcommand{\dotprod}[1]{\left< #1\right>} % product
\newcommand{\norm}[1]{ \left\| #1 \right\|}      % norm 
\newcommand{\Prob}[1]{\mathbb{P}[#1]}
\newcommand{\Diag}[1]{\mathbf{Diag}\left( #1\right)}
\providecommand{\trace}[1]{{\rm Trace}\left( #1\right)}
\newcommand{\E}[1]{\mathbb{E}\left[#1\right] } 
\newcommand{\EE}[2]{\mathbb{E}_{#1}\left[#2\right] }
\newcommand{\sym}{\mathbb{S}}
\newcommand{\prob}[1]{\mathbb{P}\left(#1\right)}
\newcommand{\normfro}[2]{\norm{#1}_{F(#2)}}
\icmltitlerunning{Fast Linear Convergence of Randomized BFGS}
\title{\bf Fast Linear Convergence of Randomized BFGS}
\author[1]{Dmitry Kovalev}
\author[2]{Robert M.\ Gower}
\author[1]{Peter Richt\'arik}
\author[1,3,4]{Alexander Rogozin} 
\affil[1]{King Abdullah University of Science and Technology (KAUST), Thuwal, Saudi Arabia}
\affil[2]{
T\'el\'ecom Paris, IPP, Paris, France}
\affil[3]{Moscow Institute of Physics and Technology, Dolgoprudny, Russia}
\affil[4]{Sirius University of Science and Technology, Sochi, Russia\footnote{This work was conducted while A. Rogozin was a research intern in the Optimization and Machine Learning Lab of Peter Richt\'arik at KAUST; this visit was funded by the KAUST Baseline Research Funding Scheme. The research of A. Rogozin was also partially supported by RFBR, project number 19-31-51001.}}
\date{February 9, 2020\\
(Revised: \today)}
\begin{document}
\maketitle

\begin{abstract}
 Since the late 1950's when  quasi-Newton methods first appeared, they have become one of the most widely used and efficient algorithmic paradigms for unconstrained optimization. Despite their immense practical success, there is little theory that shows why these methods are so efficient. We provide a local rate of convergence for a randomized BFGS method which can be significantly better than that of gradient descent, thus giving theoretical evidence supporting the superior empirical performance of quasi-Newton methods.
\end{abstract}

{\footnotesize
\tableofcontents
}
\newpage
\section{Introduction}

In this paper we consider the optimization problem
\begin{equation}\label{eq:prob}
x_* \in \arg\min\limits_{x \in \R^d} f(x),
\end{equation}
where $f: \R^d \rightarrow \R$ is a twice continuously differentiable function. While the BFGS method~\citep{broyden1967,fletcher1970,goldfarb1970,shanno1970conditioning} is one of the most efficient and celebrated algorithms for solving~\eqref{eq:prob}, a clear theoretical justification for its success, or the success of {\em any} quasi-Newton method, has been elusive. We do know, however, that the quasi-Newton methods converge $Q$--superlinearly~\citep{Powell1971}, and this is often pointed to as the justification for their success. Yet this superlinear convergence occurs only asymptotically, in an arbitrarily small ball around the solution, and at an unknown superlinear rate. In particular, this superlinear rate could be arbitrary close to linear.

In this work we provide {\em the first meaningful convergence rate of  (a randomized variant of) BFGS}. By meaningful, we mean a rate that can be faster then the rate  of gradient descent,  thus giving much stronger support to the practical success of the BFGS method than what has been available so far.  Furthermore, for our results to hold, we only need to assume  $f$ to be self-concordant. 

Let $\sym^d $ denote the set of $d \times d$ symmetric  matrices, and let $\mH_{x} \eqdef \nabla ^2 f(x)$ be the Hessian matrix of $f$ evaluated at $x$. For $x=x_k$ we will further abbreviate $\mH_{k}\eqdef \mH_{x_k}$.  In this paper we consider the randomized BFGS update first introduced by~\citet{inverse}, and later used in the context of machine learning by~\citet{GowerGold2016}, given by the formula
\begin{equation} 
\text{BFGS}(\mB,\mH, \mS)  \eqdef  \mG 	+ \left(\mI - \mG \mH \right)\mB \left(\mI - \mH \mG \right),
		\label{eq:BFGS}
\end{equation}
where 
$\mG = \mG(\mH, \mS)\eqdef \mS(\mS^\top  \mH \mS)^{-1}\mS^\top,$
 $\mS\in \R^{d \times \tau} $ is a (typically thin) random matrix with $\tau \ll d$ columns and full column rank, $\mH \in \sym^d$ is a non-singular target Hessian matrix and $\mB \in \sym^d$ is the current estimate of the inverse Hessian matrix. Note that $\mS^\top  \mH \mS$ is invertible since we  assume that $\mS$ has full column rank and $\mH$ is invertible. We use this BFGS update in  the randomized BFGS method stated as Algorithm~\ref{alg:BFGS}.

\begin{algorithm}
	\caption{RBFGS}
	\label{alg:BFGS}
	\begin{algorithmic}[1]
		\STATE {\bf Parameters:} $x_0 \in \R^d, \mB_0 \in \sym^{d}$, distribution $\cD$ over matrices from $\R^{d\times \tau}$ 
		\FOR{$k = 0,1,2, \ldots$}
		%	\STATE $\mH_k = \nabla^2 f(x_k)$
			\STATE $x_{k+1} = x_k - \mB_k\nabla f(x_k)$ \label{ln:step}

			\STATE   {\bf Monotonic option}:
			\STATE  $\quad x_{k+1} = \arg\min \left\{f(x_{k+1}), f(x_k) \right\}$  
			 \label{ln:monotonic} 
			\STATE Sample random $\mS_k \sim \cD$
			\STATE Compute 
			$\mB_{k+1}
			=
			\text{BFGS}(\mB_k,\mH_k,\mS_k)
			$						 
			%\hfill \% For the non-self concordant case.
		\ENDFOR
	\end{algorithmic}
\end{algorithm}

In order to update our estimate of the inverse Hessian $\mH_k^{-1}$, we use a {\em random linear} measurement $\mH_k \mS_k$ of the true Hessian $\mH_k$. Random linear transformations are often alternatively called {\em sketches}, and we adopt this terminology.  We thus refer to the random matrix $\mS_k \in \R^{d\times \tau}$  by the name {\em  sketching matrix,} and to the  product $\mH_k \mS_k$ by the name {\em Hessian sketch}. Note that the sketch can be computed by doing $\tau$ directional derivatives of the gradient since
\begin{equation}\label{eq:HS}
 \left. \frac{d}{dt}\nabla f(x_k+ t s_i )\right|_{t=0}  \;= \; \nabla ^2 f(x_k) s_i\;=\; \mH_k s_i, 
\end{equation}
for $i=1,\ldots, \tau,$ where $s_i$ is the $i$th column of $\mS_k.$ 

Standard BFGS performs very similar iterations to RBFGS, but uses {\em deterministic} sketching matrices of the form $\mS_k=x_k-x_{k-1}$ (in particular, BFGS is forced to use $\tau=1$), where  $x_k,x_{k-1}\in \R^d$ are the last two iterates produced by BFGS, and  {\em approximates} the Hessian sketch via first-order information as follows: $\mH_k \mS_k \approx \nabla f(x_k) - \nabla f(x_{k-1})$. 

Our forthcoming theoretical results hold for any distribution $\cD $ of sketching matrices.  Based on the intuition our general theory provides,  in the case of generalized linear models (see Section~\ref{sec:GLM})  we also develop a particular distribution $\cD$  based on the singular value decomposition.
In Section~\ref{sec:numerics} we show that our new sketching matrix is well suited to highly ill-conditioned problems. We also compare our new sketch  against  other standard sketches and the original BFGS method in a series of numerical experiments.

\subsection{Background}

We now recall the background on the convergence of BFGS, some modern randomized BFGS variants and their convergence results, and also techniques surrounding self-concordance.

\paragraph{Classic convergence results.} The BFGS method was first shown to converge locally and superlinearly about 50 years ago~\citep{Powell1971}. This proof was later extended to include a larger family of quasi-Newton methods by
\citet{BroydenDennisMore1973}.  Much more recently, \citet{GoldfarbGao2016b} showed that the {\em block} BFGS method first introduced by \cite{GowerGold2016} also converges superlinearly (and without the need for line search).

\paragraph{Modern randomized variants of BFGS.}
\citet{Byrd2015}  proposed the SQN method which uses  a single Hessian-vector product in the BFGS update, as opposed to using differences of stochastic gradients.
Subsequently,~\citet{Moritz2015} proposed combining  SQN with the variance reduced gradient method SVRG~\citep{Johnson2013}, and provided a global linear convergence rate for the resulting method.
\citet{GowerGold2016} extended the SQN method to allow for sketching matrices~\eqref{eq:HS}, and also provide an improved linear convergence rate. However, their rate is still orders of magnitude worse than the rate of convergence of gradient descent. 
  Very recently, \citet{SchmidtLacost2019} gave a global convergence rate for a stochastic variant of the BFGS method combined with stochastic gradient descent.
The reason that the rates of convergence in all of these previous works are significantly inferior to that of gradient descent, is that they do not factor in the contribution of the quasi-Newton matrix towards the convergence. Instead, the previous analysis focus on obtaining bounds on the estimated inverse Hessian 
\begin{equation}\label{eq:spectrabnd}
 c \mI  \preceq \mB_k \preceq C \mI, \quad \mbox{where} \quad 0<c<C,
\end{equation}
and then use this to ``bound away'' the contribution of the quasi-Newton method, for which a cost is paid. Following this step, the analysis follows verbatim  the standard analysis of gradient-based methods, albeit with the added burden that the constants $c$ and $C$~\eqref{eq:spectrabnd} bring.

\paragraph{Self-concordance.} 
It was a revolution\footnote{A precursor of this result made the front page of 1979 New York Times entitled ``Soviet Discovery Rocks World of Mathematics''. It also made the headlines of the Guardian, with a humorously incorrect title ``Soviet Answer to Travelling Salesman Problem''~\citep{EncyclopediaOpt}.}  in the optimization community when, in the late 70’s and early 80's, it was shown that a large class of convex optimization problems\footnote{The self-concordant barrier functions with closed bounded domains.} could be solved in polynomial time by interior point methods~\citep{nesterov1994interior}.
A key concept that facilitated this revolution was that of self-concordance, which describes a large class of convex functions whose second order derivative could be naturally ``controlled'' by the third derivative. We  rely on self-concordant functions in this paper in Theorem~\ref{theo:sc}, and in the proof, show how the convergence of the BFGS method can leverage self-concordance.

\subsection{Notation and definitions}

Both quasi-Newton methods and self-concordance are defined using weighted norms.  For every $\mW \in \R^{d\times d}$ and $\mH \in \sym^d$ and positive definite we write
\[\normfro{\mW}{\mH}^2 \eqdef  \trace{\mH \mW \mH \mW^\top} = \norm{\mH^{1/2}\mW\mH^{1/2}}_F^2\] to denote a weighted Frobenius norm.  Let 
$$\norm{v}_x^2 \eqdef \dotprod{\mH_x v,  v}$$ 
be the \emph{local inner product}~\citep{Renegar:2001}. To further abbreviate our notation, we will use $\norm{v}_* \eqdef \norm{v}_{x_*}$.
Let 
\[\cB^\delta_x \;\eqdef \; \{y\in \R^d \, : \, \norm{y-x}_x < \delta  \}\] 
be the ball of size $\delta>0$ around $x$ under the local norm.  Using the local inner product, we can now state the definition of the key concept of self-concordance.
\begin{definition}\label{def:sc} A functional $f:\R^d\to \R$ is self-concordant if for all $x$ in its domain the  Hessian is positive definite, and for each $y\in \cB_x^1$ and $ v \neq 0$ we have 
\begin{equation}\label{eq:selfcon}
1-\norm{y-x}_x \leq   \frac{\norm{v}_y}{ \norm{v}_x} \leq \frac{1}{1-\norm{y-x}_x}.
\end{equation}
\end{definition}

 \section{Convergence Results}
 
 In this section we establish our main convergence results: local linear convergence for self-concordant functions and for smooth and strongly convex functions, and superlinear convergence.

\subsection{Local linear convergence for self-concordant functions}

Under the assumption of self-concordance only, we now prove local linear convergence of the randomized BFGS method in the following theorem.
\begin{theorem} \label{theo:sc}
Let
	\begin{equation}\label{eq:rho2} \rho \eqdef \inf\limits_{x \in \R^d} \lambda_{\min}\left(\E{\mH_{x}^{1/2}\mS(\mS^\top  \mH_x \mS)^{-1}\mS^\top \mH_{x}^{1/2}}\right)
	\end{equation}
and consider the Lyapunov function
$$
\Phi_{\sigma}^k  \eqdef  \sigma\norm{\mB_{k} - \mH_*^{-1}}_{F(\mH_*)}^2 +   \norm{x_k -x_*}_*,
$$
where $\sigma \eqdef \frac{3}{ \rho}$.
If $f$ is self-concordant and 
\begin{equation}\label{eq:convradap}\Phi_{\sigma}^0 \leq  \frac{1}{2}\min \left \{ \frac{3}{2} - \frac{1}{2}\sqrt{1+ 8\sqrt{\frac{1-\rho}{1-\frac{2}{3}\rho}}},  \;\;   \rho\frac{2-\rho}{69  d  +5 \rho}  % Previously\rho \frac{2-\rho}{90  d  +10 \rho}
 \right\} ,
\end{equation}
%
%\begin{align}&\norm{x_0-x_*}_{*} \leq \nonumber\\
%& \;\;  \frac{1}{2}\min \left \{ \frac{3}{2} - \frac{1}{2}\sqrt{1+ 8\sqrt{\frac{1-\rho}{1-\frac{2}{3}\rho}}},  \;\;   \rho\frac{2-\rho}{69  d  +5 \rho} \right\} ,\label{eq:convradap}
%\end{align}
%and 
%\begin{equation}\label{eq:lessthanquarter}
%  \sigma\norm{\mB_{0} - \mH_*^{-1}}_{F(\mH_*)}^2 +   \norm{x_0 -x_*}_*  \; \leq \; \frac{1}{4},
%\end{equation}
%\begin{equation}\label{eq:convrad}\norm{x_0-x_*}_{*} \leq  \frac{1}{2}\min \left \{\frac{1}{2}, \; 1 -\sqrt{\frac{1-\rho}{1-\frac{2\rho}{3}} }, \; \frac{4-2\rho}{9 \rho d  +10}\right\} ,
%\end{equation}
then Algorithm~\ref{alg:BFGS} converges linearly according to
\begin{equation}\label{eq:Phirecur}
\E{\Phi_{\sigma}^{k+1}  }\; \leq \; \left( 1- \frac{\rho}{2}\right)  \E{\Phi_{\sigma}^{k}}.
\end{equation}
Unrolling this recurrence, we get
$$
\E{\norm{x_k -x_*}_*} \; \leq \;  \left( 1- \frac{\rho}{2}\right)^k \Phi_{\sigma}^0 .
$$
\end{theorem}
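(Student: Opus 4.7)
The plan is to prove the one-step recursion \eqref{eq:Phirecur} by separately contracting the two pieces of the Lyapunov $\Phi_\sigma^k$ and then combining them through the weight $\sigma=3/\rho$; unrolling is then immediate since $\norm{x_k-x_*}_*\le\Phi_\sigma^k$. Throughout, I would maintain inductively that $\Phi_\sigma^k$ stays below the bound \eqref{eq:convradap}, so that $x_k$ remains in a ball around $x_*$ where Definition~\ref{def:sc} applies; the \textbf{Monotonic option} in Algorithm~\ref{alg:BFGS} is what prevents a bad random sample from ejecting the iterate from this ball.

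For the matrix piece I would invoke the projection interpretation of \eqref{eq:BFGS}: the map $\mB\mapsto\mathrm{BFGS}(\mB,\mH_k,\mS_k)$ is the orthogonal projection of $\mB$ onto the affine subspace $\{\mB\in\sym^d:\mB\mH_k\mS_k=\mS_k\}$ with respect to the inner product $\langle\mX,\mY\rangle_{F^{-1}(\mH_k)}=\trace(\mH_k^{-1}\mX\mH_k^{-1}\mY)$, and this subspace contains $\mH_k^{-1}$. By Pythagoras together with the definition of $\rho$ in~\eqref{eq:rho2},
\begin{equation*}
\E{\normfro{\mB_{k+1}-\mH_k^{-1}}{\mH_k}^2\,\mid\,x_k,\mB_k}\;\le\;(1-\rho)\,\normfro{\mB_k-\mH_k^{-1}}{\mH_k}^2.
\end{equation*}
To convert this into a contraction toward $\mH_*^{-1}$ in $\normfro{\cdot}{\mH_*}$, I would use \eqref{eq:selfcon} to compare $\mH_k$ with $\mH_*$ (paying only a factor $(1-\norm{x_k-x_*}_*)^{-O(1)}\approx 1$ thanks to \eqref{eq:convradap}) and add a triangle inequality involving $\normfro{\mH_k^{-1}-\mH_*^{-1}}{\mH_*}\le \sqrt{d}\cdot O(\norm{x_k-x_*}_*)$; the $\sqrt d$ here is what will force the dimension dependence in~\eqref{eq:convradap}.

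For the iterate piece, using $\nabla f(x_*)=0$ and the fundamental theorem of calculus, $x_{k+1}-x_*=(\mI-\mB_k\bar\mH_k)(x_k-x_*)$ with $\bar\mH_k\eqdef\int_0^1\nabla^2 f(x_*+t(x_k-x_*))\,dt$. Writing $\mI-\mB_k\bar\mH_k=\mH_*^{-1}(\mH_*-\bar\mH_k)+(\mH_*^{-1}-\mB_k)\bar\mH_k$ and taking the $\norm{\cdot}_*$-norm yields
\begin{equation*}
\norm{x_{k+1}-x_*}_*\;\le\;\Big(\normfro{\mB_k-\mH_*^{-1}}{\mH_*}+O(\norm{x_k-x_*}_*)\Big)\cdot\norm{x_k-x_*}_*,
\end{equation*}
where the self-concordance factor is quadratic in $\norm{x_k-x_*}_*$ (controlled via \eqref{eq:selfcon}) and the prefactor of the matrix error uses the spectral-to-Frobenius bound. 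In particular, under \eqref{eq:convradap} the factor in parentheses is strictly less than $1$, which is what lets the \textbf{Monotonic option} be vacuous and the induction go through.

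Feeding both bounds into $\Phi_\sigma^{k+1}$ and choosing $\sigma=3/\rho$, the matrix side contributes $(1-\rho)\sigma\,\normfro{\mB_k-\mH_*^{-1}}{\mH_*}^2$ plus a triangle-inequality perturbation, while the iterate side contributes at most $\bigl(\normfro{\mB_k-\mH_*^{-1}}{\mH_*}+O(\norm{x_k-x_*}_*)\bigr)\norm{x_k-x_*}_*$. The key algebraic step is to absorb the two cross contributions---the $\sqrt d\cdot\norm{x_k-x_*}_*$ perturbation on the matrix side and the $\normfro{\mB_k-\mH_*^{-1}}{\mH_*}\cdot\norm{x_k-x_*}_*$ coupling on the iterate side---via AM--GM into the slack $\rho/2$ that the matrix contraction leaves beyond the target rate $1-\rho/2$. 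Carrying this out quantitatively is what produces the two explicit constants $\tfrac32-\tfrac12\sqrt{1+8\sqrt{(1-\rho)/(1-\tfrac23\rho)}}$ and $\rho(2-\rho)/(69d+5\rho)$ in~\eqref{eq:convradap}. I expect this combining step to be the main obstacle: because the matrix error enters $\Phi_\sigma^k$ quadratically while the iterate enters linearly, the two cross terms must be balanced with different AM--GM splits, and the $\sqrt d$ incurred by the Frobenius triangle inequality is precisely what dictates the $1/d$ scaling of the admissible initial radius.
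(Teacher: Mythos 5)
Your architecture coincides with the paper's actual proof in Appendix~\ref{sec:prooftheosc}: an induction that keeps $\Phi_\sigma^k$ below the threshold \eqref{eq:convradap} (hence $\norm{x_k-x_*}_*\le \tfrac14$, which is what activates all the self-concordance lemmas), a sketch-and-project contraction for the matrix error with an additive Hessian-drift term of order $d\,\norm{x_k-x_*}_*^2$ coming from the spectral-to-Frobenius bound and Lemma~\ref{lem:hesscontrol}, the same splitting of the iterate error into a Newton-at-$x_*$ term plus $(\mH_*^{-1}-\mB_k)\nabla f(x_k)$, and the same AM--GM balancing that makes $\sigma=3/\rho$ the right weight (the paper trades $1-\tfrac{2\rho}{3}+\tfrac{1}{2\sigma}=1-\tfrac{\rho}{2}$). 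The one genuinely different step is your matrix bound: you contract toward the moving target $\mH_k^{-1}$ by Pythagoras and then triangle-inequality over to $\mH_*^{-1}$. The paper instead expands $\normfro{\mB_{k+1}-\mH_*^{-1}}{\mH_k}^2$ directly, obtaining $\norm{P(\mR_k)}_F^2+\norm{P(\mD_k)-\mD_k}_F^2$ plus a cross term that is \emph{exactly zero} (Lemma~\ref{lem:Bkplus1contract12}); this exact orthogonality is what lets the $(1-\rho)$ factor survive intact. Your triangle-inequality route forces a Young-type split $(a+b)^2\le(1+\epsilon)a^2+(1+\epsilon^{-1})b^2$ with $\epsilon\sim\rho$, inflating both the contraction factor and the drift term, so while it proves the same qualitative theorem, it cannot reproduce the specific constants in \eqref{eq:convradap} and yields a strictly smaller admissible neighborhood than the statement claims.

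Two local corrections. First, the Monotonic option plays no role in Theorem~\ref{theo:sc} (it is invoked only for Theorem~\ref{theo:locallinear}); monotonicity of $f$ would not by itself control $\norm{x_k-x_*}_*$, and in both your argument and the paper's it is the induction hypothesis on the full Lyapunov function --- not the monotone step --- that keeps $x_k\in\cB_{x_*}^{1/4}$. Relatedly, your remark that the iterate contraction factor being below one ``lets the induction go through'' is not sufficient: the matrix error $\normfro{\mB_k-\mH_*^{-1}}{\mH_*}$ can grow along a single sample path (the $(1-\rho)$ contraction holds only in expectation, and the norm-change factors of \eqref{eq:normchangescx}--\eqref{eq:normchangescy} exceed one), so what must be propagated is the bound on $\Phi_\sigma^k$ itself, exactly as the paper does. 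Second, the BFGS update \eqref{eq:BFGS} is the projection of $\mB_k$ onto $\{\mB\in\sym^d:\mB\mH_k\mS_k=\mS_k\}$ with respect to the inner product $\trace{\mH_k\mX\mH_k\mY}$ --- i.e., the $\normfro{\cdot}{\mH_k}$ geometry, in which $\mH_k^{1/2}\mB\mH_k^{1/2}\mapsto \mZ_k+(\mI-\mZ_k)\mH_k^{1/2}\mB\mH_k^{1/2}(\mI-\mZ_k)$ is a plain Frobenius projection --- not $\trace{\mH_k^{-1}\mX\mH_k^{-1}\mY}$ as you wrote; your displayed contraction is stated in the correct norm $\normfro{\cdot}{\mH_k}$, so this is an inconsistency in the justification rather than in the inequality itself.
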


We now provide a few key insights and a brief outline of the proof technique of Theorem~\ref{theo:sc}; the  complete proof is given in Appendix~\ref{sec:prooftheosc}. Our starting point is the work of \citet{inverse} who  studied randomized algorithms for inverting a fixed invertible\footnote{An extension to the computation of the Moore-Penrose pseudoinverse of a general rectangular matrix was developed in \citep{PSEUDOINVERSE}.} matrix $\mH$. Their methods can be interpreted as  randomized, non-adaptive (and possibly block) variants of classical quasi-Newton matrix inversion formulas.  In particular, they studied randomized BFGS updates for inverting $\mH\succ 0$ and showed that the sequence of random matrices given by  the randomized BFGS rule $$\mB_{k+1} = \text{BFGS}(\mB_k, \mH, \mS_k)$$  converges linearly to $\mH^{-1}$ in mean square. 

In this work, we face the additional challenge of  a moving target $\mH$. Indeed, in our setting, $\mH$ is not fixed, but is set to $\mH = \mH_k = \nabla^2 f(x_k)$ in iteration $k$, and hence changes from iteration to iteration. If the Hessian $\mH_k$ changes too fast, then there is no reason why the randomized BFGS update should be able to ``catch up'' with the change, and be any good at maintaining a good estimate of the inverse of  $\mH_k$ throughout the iterations, let alone improve the estimate and converge to $(\nabla^2 f(x_*))^{-1}$ as $k\to \infty$.

This is the place where self-concordance comes to our aid.
Indeed,  self-concordance provides  control over how fast the Hessian, and the inverse Hessian, changes as the point $x_k$ changes; see Lemma~\ref{lem:hesscontrol} in the Appendix. By carefully combining these intuitions, we are able to control how much $\mB_k$ deviates from the Hessian $\mH_k$; see Lemma~\ref{lem:Bkplus1contract12}.  Yet stringing this  result into an induction argument is difficult due to the changing local norm. Fortunately, self-concordance allows us to ``change the norm'' to another point~\eqref{eq:selfcon}. Using this defining property of self-concordance, we state all of our results with respect to the local norm at the optimal point. This allows for a more straightforward induction argument and the resulting recurrence in~\eqref{eq:Phirecur}.

\subsection{Local linear convergence for smooth and strongly convex functions}\label{sec:b98fg9809f}

Here we prove that BFGS converges with same rate of convergence 
as in Theorem~\ref{theo:sc}, but under an alternative (though  also common) set of assumptions.
We also now use the Monotonic option on line~8 of Algorithm~\ref{alg:BFGS}. 
This enforces that the function values $f(x_k)$ are monotonically decreasing. That is, let
$\cQ \eqdef \{x \; : \; f(x) \leq f(x_0)\}.$
By~\eqref{ln:monotonic} we have that $x_k \in \cQ$ for all $k$.

Instead of self-concordance, we now rely on strong convexity, Lipschitz gradients and Hessians.
\begin{assumption} \label{ass:strconv}
	Function $f$ is $\mu$-strongly convex. That is, 
	\begin{equation}\label{eq:strconv}
		f(y) \geq f(x) + \dotprod{\nabla f(x), y-x} + \frac{\mu}{2}\norm{y-x}_2^2, \quad \forall x,y \in \R^d.
	\end{equation}
\end{assumption}

\begin{assumption} \label{ass:GradLip}
	Function $f$ has $L_1$-Lipschitz gradient. That is,
	\begin{equation} \label{eq:GradLip}
		\norm{\nabla f(y) - \nabla f(x)}_2 \leq L_1 \norm{y-x}_2, \quad \forall x,y \in \R^d.
	\end{equation}
\end{assumption}

\begin{assumption}\label{ass:HessLip}
	Function $f$ has $L_2$-Lipschitz Hessian. That is,
	\begin{equation} \label{eq:HessLip}
		\norm{\mH_{y} - \mH_{x}}_2 \leq L_2 \norm{y-x}_2, \quad \forall x,y \in \R^d.
	\end{equation}
\end{assumption}

Under these assumptions we have the following theorem.
\begin{theorem} \label{theo:locallinear}

Let Assumptions~\ref{ass:strconv},~\ref{ass:GradLip} and~\ref{ass:HessLip} hold and let
$$
\Psi_k = \sqrt{f(x_k) - f(x_*)} + \beta \norm{\mB_k - \mH_*^{-1}}_{F(\mH_*)}^2,
$$
where  $\beta = \frac{4\sqrt{2}L_1^{5/2}}{\mu L_2 \rho}$ and where $\rho$ is defined in~\eqref{eq:rho2}.
	If $f(x_0) - f(x_*) \leq \cF$, where 
	\begin{equation}\label{eq:1}
		\cF \eqdef \frac{1}{4}\left[
		\frac{\sqrt{2L_1}L_2}{\mu^2}  + \frac{32\sqrt{2}dL_1^{5/2}L_2}{\rho\mu^4}
		\right]^{-2},
	\end{equation}
then
	$
		\E{\Psi_{k+1} } \leq \left(1 - \frac{ \rho }{2} \right)\E{\Psi_k},
	$
	where 
 	\begin{equation}\label{eq:rho} \rho \eqdef \inf\limits_{x \in \cQ} \lambda_{\min}\left(\E{\mH_{x}^{1/2}\mS(\mS^\top  \mH_x \mS)^{-1}\mS^\top \mH_{x}^{1/2}}\right).\end{equation}
\end{theorem}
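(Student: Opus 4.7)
My plan is to mirror the architecture of the proof of Theorem~\ref{theo:sc}, replacing self-concordance with the triple of Assumptions~\ref{ass:strconv}--\ref{ass:HessLip} and leveraging the monotonic option on line~\ref{ln:monotonic} to confine all iterates to $\cQ$ (so that the infimum in~\eqref{eq:rho} is attained over a region containing the trajectory). The Lyapunov function $\Psi_k$ combines a first-order term $\sqrt{f(x_k)-f(x_*)}$ with a second-order matrix-deviation term $\beta\norm{\mB_k-\mH_*^{-1}}_{F(\mH_*)}^2$, so the proof reduces to showing a one-step contraction by $1-\rho/2$ of both pieces simultaneously, with a carefully chosen $\beta$ and smallness threshold $\cF$ to balance the coupling between them.

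\textbf{Matrix contraction with moving target.} The starting point is the classical sketch-and-project estimate of \citet{inverse}, which reads
\begin{equation*}
\E\bigl[\,\norm{\mB_{k+1}-\mH_k^{-1}}_{F(\mH_k)}^2\,\bigm|\,x_k,\mB_k\bigr]\;\leq\;(1-\rho)\norm{\mB_k-\mH_k^{-1}}_{F(\mH_k)}^2.
\end{equation*}
Since $\Psi_k$ is defined at the fixed reference $\mH_*$, I would transfer this bound to $\norm{\cdot}_{F(\mH_*)}$ using Assumption~\ref{ass:HessLip}. Concretely, Hessian Lipschitzness combined with strong convexity gives perturbation estimates for $\norm{\mH_*^{1/2}\mH_k^{-1/2}-\mI}_2$ and $\norm{\mH_k^{-1}-\mH_*^{-1}}_{F(\mH_*)}$ that scale polynomially in $\sqrt{d},L_1,L_2,1/\mu$ and linearly in $\norm{x_k-x_*}_2$. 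Plugging these into a Young-type splitting $\mB_{k+1}-\mH_*^{-1}=(\mB_{k+1}-\mH_k^{-1})+(\mH_k^{-1}-\mH_*^{-1})$, and using $\norm{x_k-x_*}_2^2\leq \tfrac{2}{\mu}(f(x_k)-f(x_*))$ from Assumption~\ref{ass:strconv}, yields
\begin{equation*}
\E\bigl[\,\norm{\mB_{k+1}-\mH_*^{-1}}_{F(\mH_*)}^2\,\bigr]\;\leq\;(1-\rho+\epsilon_k)\norm{\mB_k-\mH_*^{-1}}_{F(\mH_*)}^2+\gamma_k,
\end{equation*}
with $\epsilon_k,\gamma_k$ bounded by multiples of $\sqrt{f(x_k)-f(x_*)}$ and $f(x_k)-f(x_*)$ respectively.

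\textbf{Descent step and combination.} Taylor-expanding the gradient about $x_*$ using Assumption~\ref{ass:HessLip} gives $\nabla f(x_k)=\mH_*(x_k-x_*)+r_k$ with $\norm{r_k}_2\leq \tfrac{L_2}{2}\norm{x_k-x_*}_2^2$, so $x_{k+1}^{\mathrm{raw}}-x_*=(\mI-\mB_k\mH_*)(x_k-x_*)-\mB_k r_k$. Using $\norm{\mI-\mB_k\mH_*}_2\leq (L_1/\mu)\norm{\mB_k-\mH_*^{-1}}_{F(\mH_*)}$, the elementary bound $\norm{\mB_k}_2\leq 1/\mu+(1/\mu)\norm{\mB_k-\mH_*^{-1}}_{F(\mH_*)}$, Assumption~\ref{ass:GradLip} giving $f(x_{k+1}^{\mathrm{raw}})-f(x_*)\leq \tfrac{L_1}{2}\norm{x_{k+1}^{\mathrm{raw}}-x_*}_2^2$, taking square roots, and finally invoking the monotonic option to transfer the bound to $x_{k+1}$, produces an estimate of the form
\begin{equation*}
\sqrt{f(x_{k+1})-f(x_*)}\;\leq\;C_1\norm{\mB_k-\mH_*^{-1}}_{F(\mH_*)}\sqrt{f(x_k)-f(x_*)}+C_2(f(x_k)-f(x_*)),
\end{equation*}
with $C_1\sim L_1^{3/2}/\mu^{3/2}$ and $C_2\sim L_2\sqrt{L_1}/\mu^2$. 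Adding $\beta$ times the matrix contraction and splitting the cross-term by AM--GM, $C_1\norm{\Delta}\sqrt{\delta f}\leq \tfrac{\beta\rho}{2}\norm{\Delta}^2+\tfrac{C_1^2}{2\beta\rho}\delta f$, each residual $\delta f=f(x_k)-f(x_*)$ is turned into a multiple of $\sqrt{\cF}\sqrt{\delta f}$ using the induction hypothesis $\delta f\leq\cF$ (which propagates from $k=0$ by monotonicity). The specific choice $\beta=\tfrac{4\sqrt{2}L_1^{5/2}}{\mu L_2\rho}$ equalizes $C_1^2/(\beta\rho)$ with $C_2$, so that both surviving function-value coefficients are of order $C_2\sqrt{\cF}$; the threshold~\eqref{eq:1} is then exactly what makes this sum $\leq 1-\rho/2$ while simultaneously forcing $\epsilon_k$ and $\beta\gamma_k$ to be absorbable into the $\rho/2$ slack on the matrix side.

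\textbf{Main obstacle.} The delicate step is the norm transfer for the matrix contraction: extracting a clean $(1-\rho+\text{small})\norm{\mB_k-\mH_*^{-1}}_{F(\mH_*)}^2$ bound at the fixed reference from the exact $(1-\rho)$ contraction at the moving reference $\mH_k$. The perturbation errors couple the matrix deviation with $\norm{x_k-x_*}_2$ and introduce the dimension $d$ through the Frobenius bound on $\mH_k^{-1}-\mH_*^{-1}$. Closing the induction demands that these errors remain dominated by the $\rho/2$ gap at every $k$, not only at initialization, which is precisely what dictates the compound form of $\cF$ in~\eqref{eq:1}: the first summand inside the brackets governs the descent-step error, while the second (carrying the $d/(\rho\mu^4)$ factor) governs the norm-transfer error, and both must be tight simultaneously for the Lyapunov recurrence to close at the advertised rate $1-\rho/2$.
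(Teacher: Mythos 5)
Your overall architecture does match the paper's: the same Lyapunov function, the same use of the monotonic option to keep $x_k \in \cQ$ and to bound $\norm{x_k - x_*}_2 \leq \sqrt{2\cF/\mu}$, essentially the same descent estimate (the paper's Lemma~\ref{lem:sqrtfbnd} absorbs the cross term $L_1\norm{\mB_k - \mH_*^{-1}}_2\norm{x_k-x_*}_2$ by Young's inequality inside the lemma, where you defer it to an AM--GM at the combination stage --- an equivalent bookkeeping choice), and the same division of labor between $\beta$ and $\cF$. The genuine gap is in your matrix-contraction step. You start from the sketch-and-project contraction toward the \emph{moving} target, $\EE{k}{\norm{\mB_{k+1}-\mH_k^{-1}}_{F(\mH_k)}^2} \leq (1-\rho)\norm{\mB_k-\mH_k^{-1}}_{F(\mH_k)}^2$, and transfer to the fixed reference via the Young split $\mB_{k+1}-\mH_*^{-1}=(\mB_{k+1}-\mH_k^{-1})+(\mH_k^{-1}-\mH_*^{-1})$. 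Since the second piece satisfies $\norm{\mH_k^{-1}-\mH_*^{-1}}_F^2 = O\bigl(d L_2^2 \mu^{-4}\norm{x_k-x_*}_2^2\bigr)$, a split with parameter $\theta_k$ yields multiplicative penalty $\epsilon_k \sim \theta_k$ and additive penalty $\gamma_k \sim (1+\theta_k^{-1})\, d\,\norm{x_k-x_*}_2^2$. Your claimed scalings --- $\epsilon_k \lesssim \sqrt{f(x_k)-f(x_*)}$ \emph{and} $\gamma_k \lesssim f(x_k)-f(x_*)$ --- are mutually incompatible: the first forces $\theta_k \sim \norm{x_k-x_*}_2$, whence $\gamma_k \sim d\,\norm{x_k-x_*}_2 \sim d\sqrt{f(x_k)-f(x_*)}$, and then $\beta\gamma_k$ contributes a coefficient of order $\beta d$ on the \emph{first} power of $\sqrt{f(x_k)-f(x_*)}$, the leading term of $\Psi_k$; this cannot be shrunk by taking $\cF$ small, and the recurrence does not close. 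Conversely, a fixed $\theta \sim \rho$ repairs the additive scaling ($\gamma_k \sim \delta f_k/\rho$, absorbable via $\sqrt{\cF}$), but you then also need a second split to relate $\norm{\mB_k-\mH_k^{-1}}_{F(\mH_k)}$ back to $\norm{\mB_k-\mH_*^{-1}}_{F(\mH_k)}$, eating a constant fraction of the contraction twice and inflating the error by $1/\rho$; you would recover a rate of the form $1-c\rho$ only on a region of convergence strictly smaller than the $\cF$ stated in~\eqref{eq:1}.

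The paper avoids the split entirely, and this is the idea your plan is missing. Writing $\mR_k = \mH_k^{1/2}(\mB_k-\mH_*^{-1})\mH_k^{1/2}$, $\mD_k = \mH_k^{1/2}(\mH_*^{-1}-\mH_k^{-1})\mH_k^{1/2}$, $\mZ_k = \mH_k^{1/2}\mS(\mS^\top\mH_k\mS)^{-1}\mS^\top\mH_k^{1/2}$ and $P(\mA)=(\mI-\mZ_k)\mA(\mI-\mZ_k)$, the BFGS update gives exactly $\mH_k^{1/2}(\mB_{k+1}-\mH_*^{-1})\mH_k^{1/2} = P(\mR_k) + P(\mD_k) - \mD_k$, and the cross term $\dotprod{P(\mR_k),\, P(\mD_k)-\mD_k}$ \emph{vanishes identically}, because $\mZ_k$ is an orthogonal projector, so $P$ is a self-adjoint idempotent in the trace inner product and $\dotprod{P(\mR_k), P(\mD_k)} = \dotprod{P(\mR_k), \mD_k}$. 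This yields Lemma~\ref{lem:Bkplus1contract1}: contraction by exactly $1-\rho$ directly toward the \emph{fixed} target $\mH_*^{-1}$, with additive error $\normfro{\mH_k-\mH_*}{\mH_*^{-1}}^2 \leq \frac{2dL_2^2}{\mu^3}\bigl(f(x_k)-f(x_*)\bigr)$, which is genuinely second order in the distance. After that, only a benign multiplicative change of weighting $\bigl(1+\frac{L_2}{\mu}\norm{x_k-x_*}_2\bigr)^2$ between $F(\mH_k)$ and $F(\mH_*)$ (the paper's inequality~\eqref{eq:normchange}) remains, and it is controlled by $\cF$. Without this exact cancellation, the stated constants in~\eqref{eq:1} --- and, along your adaptive-$\theta_k$ variant, the recurrence itself --- are out of reach.
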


A disadvantage of using Assumptions~\ref{ass:strconv},  \ref{ass:GradLip},  and \ref{ass:HessLip}, as we do in Theorem~\ref{theo:locallinear},  as compared to self-concordance,  as we do in Theorem~\ref{theo:sc}, is that now the region of local convergence~\eqref{eq:1} depends on constants which can be hard, or almost numerically impossible (such as $\mu$), to compute. As such, it would be very difficult to use Theorem~\ref{theo:locallinear} together with globalization strategies to develop a global linear convergence. In contrast, we only need to have a bound on $\rho$ to compute the size of the region of convergence in  Theorem~\ref{theo:locallinear} (see \eqref{eq:convradap}). We give some insights into bounding $\rho$ in Section~\ref{sec:exe}. The downside of Theorem~\ref{theo:locallinear} is that the norms $\norm{\cdot}_*$ cannot be computed since it relies on $x_*.$ This issue can be dealt with by using a continuation scheme to design a globalization strategy \citep{Renegar:2001}.

\subsection{Superlinear convergence}

For completion, we also prove the superlinear convergence of Algorithm~\ref{alg:BFGS} with high probability. 
\begin{theorem}\label{thm:superlinear}
	$\sqrt{f(x_k) - f(x_*)}$ converges to 0 at superlinear rate with probability 1.
\end{theorem}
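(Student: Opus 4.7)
The plan is to upgrade the expected contraction from Theorem~\ref{theo:locallinear} to almost-sure convergence of both $\{x_k\}$ and $\{\mB_k\}$, and then apply a perturbed-Newton step decomposition to show that the ratio of consecutive errors vanishes. I work in the setting of Theorem~\ref{theo:locallinear}, i.e., Assumptions~\ref{ass:strconv},~\ref{ass:GradLip}, and~\ref{ass:HessLip}, with initial condition $f(x_0)-f(x_*)\leq \cF$; the Monotonic option preserves this condition at every iteration, so the recurrence of Theorem~\ref{theo:locallinear} may be iterated to give $\E{\Psi_k}\leq (1-\rho/2)^k\Psi_0$.

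From this, Markov's inequality yields $\Prob{\Psi_k\geq \epsilon}\leq \epsilon^{-1}(1-\rho/2)^k\Psi_0$, which is summable in $k$, so Borel--Cantelli gives $\Psi_k\to 0$ almost surely. Since $\Psi_k$ is a sum of two nonnegative terms, both $f(x_k)-f(x_*)\to 0$ and $\norm{\mB_k-\mH_*^{-1}}_{F(\mH_*)}\to 0$ a.s. Strong convexity then upgrades the first to $\norm{x_k-x_*}_2\to 0$ a.s., while $\mH_*\succ 0$ ensures that weighted-Frobenius convergence of $\mB_k$ forces $\norm{\mB_k-\mH_*^{-1}}_2\to 0$ a.s., whence $\norm{\mB_k}_2$ is bounded for large $k$.

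Next I would decompose the BFGS update as a perturbed Newton step. Using $\nabla f(x_*)=0$ and Assumption~\ref{ass:HessLip}, the integral form of Taylor's theorem gives $\norm{\nabla f(x_k)-\mH_k(x_k-x_*)}_2\leq \tfrac{L_2}{2}\norm{x_k-x_*}_2^2$. Writing
\[
x_{k+1}-x_* \;=\; (\mI-\mB_k\mH_k)(x_k-x_*)-\mB_k\bigl(\nabla f(x_k)-\mH_k(x_k-x_*)\bigr),
\]
and further splitting $\mI-\mB_k\mH_k=(\mH_*^{-1}-\mB_k)\mH_k+\mH_*^{-1}(\mH_*-\mH_k)$, one obtains
\[
\norm{x_{k+1}-x_*}_2 \leq \Bigl(L_1\norm{\mB_k-\mH_*^{-1}}_2+\tfrac{L_2}{\mu}\norm{x_k-x_*}_2\Bigr)\norm{x_k-x_*}_2+\tfrac{L_2}{2}\norm{\mB_k}_2\norm{x_k-x_*}_2^2.
\]
By the previous paragraph each quantity inside the parentheses vanishes almost surely while $\norm{\mB_k}_2$ stays bounded, so $\norm{x_{k+1}-x_*}_2/\norm{x_k-x_*}_2\to 0$ a.s.\ (with the usual convention when the denominator vanishes), which is $Q$-superlinear convergence of $\{x_k\}$. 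Finally, since $\nabla f(x_*)=0$, Assumptions~\ref{ass:strconv} and~\ref{ass:GradLip} yield the two-sided bound $\sqrt{\mu/2}\,\norm{x_k-x_*}_2\leq \sqrt{f(x_k)-f(x_*)}\leq \sqrt{L_1/2}\,\norm{x_k-x_*}_2$, so the same $Q$-superlinear rate transfers to $\sqrt{f(x_k)-f(x_*)}$.

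The main technical hurdle is the first step: upgrading the expected Frobenius contraction to an almost-sure operator-norm bound on $\mB_k-\mH_*^{-1}$, and carefully checking that the Monotonic option keeps every iterate in the region where Theorem~\ref{theo:locallinear} applies so that the recurrence can be iterated. Once $x_k\to x_*$ and $\mB_k\to\mH_*^{-1}$ along almost every sample path, the perturbed-Newton decomposition above is essentially the classical argument of \citet{BroydenDennisMore1973}, specialized to the randomized setting by passing to limits pathwise.
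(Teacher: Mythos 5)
Your probabilistic skeleton is essentially the paper's: the paper likewise iterates the recurrence of Theorem~\ref{theo:locallinear}, converts the linear decay of $\E{\Psi_k}$ into almost-sure statements via Markov plus Borel--Cantelli, and uses the same Newton-plus-perturbation split $x_k-x_*-\mH_*^{-1}\nabla f(x_k)+(\mH_*^{-1}-\mB_k)\nabla f(x_k)$ (your factorization of $\mI-\mB_k\mH_k$ is algebraically equivalent). The structural difference is where the almost-sure limit is taken: the paper never passes to the iterates at all, but proves the pathwise inequality $\sqrt{(f(x_{k+1})-f(x_*))/(f(x_k)-f(x_*))}\leq a\Psi_k+b\sqrt{\Psi_k}$ and applies Markov/Borel--Cantelli directly to this function-value ratio (Lemma~\ref{lem:1}), combining it with the almost-sure convergence $\sqrt{f(x_k)-f(x_*)}\to 0$ (Lemma~\ref{lem:2}); you apply Borel--Cantelli to $\Psi_k$ itself and then argue deterministically along almost every sample path. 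Both conversions are valid.

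The genuine gap is your iterate-level ratio claim under the Monotonic option. Theorem~\ref{theo:locallinear}, which you invoke and whose proof requires the monotone step, sets $x_{k+1}=\arg\min\{f(x_+),f(x_k)\}$ where $x_+=x_k-\mB_k\nabla f(x_k)$, so your decomposition controls $\norm{x_+-x_*}_2$, not $\norm{x_{k+1}-x_*}_2$: at any iteration where the trial point is rejected you have $x_{k+1}=x_k$, the ratio $\norm{x_{k+1}-x_*}_2/\norm{x_k-x_*}_2$ equals $1$, and the claimed convergence of that ratio to $0$ is not yet proved. The repair is short: your bound gives $\norm{x_+-x_*}_2\leq\epsilon_k\norm{x_k-x_*}_2$ with $\epsilon_k\to 0$ almost surely, and once $\epsilon_k<\sqrt{\mu/L_1}$, smoothness and strong convexity yield $f(x_+)-f(x_*)\leq \tfrac{L_1}{2}\norm{x_+-x_*}_2^2\leq \tfrac{L_1}{\mu}\,\epsilon_k^2\,(f(x_k)-f(x_*))<f(x_k)-f(x_*)$, so the trial step is accepted for all sufficiently large $k$ on almost every path and the superlinear ratio holds along the tail, which suffices for the asymptotic statement. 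Note that the paper sidesteps this issue entirely by staying at the function-value level, where $f(x_{k+1})\leq f(x_+)$ holds by construction of the monotone step whether or not the trial point is accepted. With that one repair, your argument is correct.
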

\begin{proof}
The proof follows by combining Lemma~\ref{lem:1} and Lemma~\ref{lem:2}.
\end{proof}

\section{Examples and Applications}
\label{sec:exe}
To gain some insight into the  consequences of Theorems~\ref{theo:locallinear} and~\ref{theo:sc},  we first examine two extreme cases for the choice of the sketching matrix: 
\begin{itemize}
\item[(i)] $\mS$ is invertible (full curvature),
\item[(ii)]$\mS$ has only one column.
\end{itemize} 
We then develop a more involved SVD sketch, and apply it to generalized linear models and linear programming.

\subsection{Invertible $\mS$} 
When the sketching matrix $\mS$ is invertible with probability one, then $\rho =1$. In view of~\eqref{eq:convradap}, if 
\begin{align*}
3\norm{\mB_{0} - \mH_*^{-1}}_{F(\mH_*)}^2 +   \norm{x_0 -x_*}_*  &\leq   \min \left \{\frac{1}{4},   \frac{1}{2}\frac{1}{69  d  +5 }\right\}\\
& =  \frac{1}{2}\frac{1}{69  d  +5 }\,,
\end{align*}
then 
$\norm{x_k -x_*}_*  \leq   \left( \frac{1}{2}\right)^k \Phi_{\sigma}^0 .$
This is not surprising. Indeed, given that when $\mS$ is invertible, it is not hard to show that $\mB_k = \mH_k^{-1}$; that is,  the quasi-Newton matrix equals the inverse Hessian. Consequently, Algorithm~\ref{alg:BFGS} is equivalent to Newton's method.

\subsection{One column $\mS$}
On the other extreme, if we choose the sketch matrix to have a single column only, $\mS =s \in \R^d$, then $\rho$ can be much smaller than $1$ and will also depend on the spectrum of the Hessian.
\begin{corollary}[Single column sketches]\label{cor:singlevecsketch}
Let $0 \prec \mU \in \R^{n\times n}$ be a symmetric positive definite matrix such that $\mH_x \preceq \mU, \; \forall x \in \R^d$.
Let $\mD =[d_1,\ldots, d_n] \in \R^{n\times n}$ be
a given invertible matrix such that $d_i^\top \mH_x d_i \neq 0$ for all $x \in \R^d$ and $i =1,\ldots, n$. Furthermore, suppose that $f$ is convex and the Hessian matrix is Lipschitz. If we sample according to $\prob{\mS_k = d_i} = p_i \eqdef \frac{d_i^\top \mU d_i}{\trace{\mD^\top \mU \mD}},$ then,  under the assumptions of Theorem~\ref{theo:locallinear}, Algorithm~\ref{alg:BFGS} converges at a rate of at least
\begin{equation}\label{eq:rhosinglesketch}
  \rho \; \geq \; \min_{x \in \cQ}\frac{\lambda_{\min}^+(\mH_x^{1/2} \mD \mD^\top \mH_x^{1/2})}{\trace{\mD^\top \mU\mD}}.
\end{equation}
\end{corollary}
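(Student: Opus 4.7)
The plan is to compute $\E{\mH_x^{1/2}\mS(\mS^\top \mH_x \mS)^{-1}\mS^\top \mH_x^{1/2}}$ explicitly under the prescribed single-column distribution, lower bound it in the Loewner order via the spectral assumption $\mH_x \preceq \mU$, and then read off the bound on $\rho$.

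First I would observe that because $\mS = d_i$ has a single column, $(\mS^\top\mH_x\mS)^{-1} = (d_i^\top\mH_x d_i)^{-1}$ is a scalar, and therefore the matrix inside $\lambda_{\min}(\cdot)$ in~\eqref{eq:rho} expands as
\[
\sum_{i=1}^n \frac{p_i}{d_i^\top \mH_x d_i}\, \mH_x^{1/2} d_i d_i^\top \mH_x^{1/2}.
\]
Substituting $p_i = (d_i^\top \mU d_i)/\trace{\mD^\top\mU\mD}$ rewrites each scalar coefficient as $(d_i^\top \mU d_i)/(\trace{\mD^\top\mU\mD}\cdot d_i^\top \mH_x d_i)$, which by $\mH_x\preceq\mU$ is bounded below by $1/\trace{\mD^\top\mU\mD}$. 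Since each $\mH_x^{1/2} d_i d_i^\top \mH_x^{1/2}$ is PSD, replacing the coefficients by this uniform lower bound preserves the Loewner order, and the sum telescopes:
\[
\E{\mH_x^{1/2}\mS(\mS^\top \mH_x \mS)^{-1}\mS^\top \mH_x^{1/2}} \;\succeq\; \frac{\mH_x^{1/2}\mD\mD^\top\mH_x^{1/2}}{\trace{\mD^\top\mU\mD}}.
\]

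Finally I would use monotonicity of $\lambda_{\min}$ under the Loewner order and take the infimum over $x\in\cQ$ to reach~\eqref{eq:rhosinglesketch}. Note that $\lambda_{\min}^+$ coincides with $\lambda_{\min}$ here because $\mD$ is invertible and $\mH_x\succ 0$, so $\mH_x^{1/2}\mD\mD^\top\mH_x^{1/2}$ is in fact positive definite. There is no genuine obstacle: the argument is essentially a one-line calculation given the right sampling distribution, and the only design choice to justify is that the probabilities $p_i$ are weighted by $d_i^\top\mU d_i$ precisely so that the numerator $d_i^\top\mU d_i$ cancels with the denominator bound $d_i^\top\mH_x d_i \leq d_i^\top\mU d_i$. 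Plugging this lower bound on $\rho$ into Theorem~\ref{theo:locallinear} then yields the advertised convergence guarantee.
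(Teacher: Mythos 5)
Your proof is correct, but it takes a genuinely different route from the paper: the paper does not compute anything here---it simply observes that the quantity $\rho$ coincides with the convergence rate of the randomized subspace Newton method and cites Corollary~1 of \citet{RSN} for the bound~\eqref{eq:rhosinglesketch}. Your argument, by contrast, is a self-contained direct verification: expand the expectation as $\sum_{i=1}^n p_i \, \mH_x^{1/2} d_i d_i^\top \mH_x^{1/2} / (d_i^\top \mH_x d_i)$, use $\mH_x \preceq \mU$ to lower bound each scalar coefficient by $1/\trace{\mD^\top \mU \mD}$, sum the PSD rank-one terms to $\mH_x^{1/2}\mD\mD^\top\mH_x^{1/2}$, and invoke Loewner monotonicity of $\lambda_{\min}$ before taking the infimum over $\cQ$. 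Every step checks out: under the assumptions of Theorem~\ref{theo:locallinear}, strong convexity gives $\mH_x \succ 0$, so the coefficients are well defined, and with $\mD$ invertible the matrix $\mH_x^{1/2}\mD\mD^\top\mH_x^{1/2}$ is positive definite, which justifies your identification $\lambda_{\min}^+ = \lambda_{\min}$. As for what each approach buys: yours makes the corollary self-contained and exposes the design principle behind the sampling weights---$p_i \propto d_i^\top \mU d_i$ is chosen precisely so that $d_i^\top \mU d_i / d_i^\top \mH_x d_i \geq 1$ cancels the worst case uniformly in $i$---whereas the paper's citation is shorter and draws the conceptual link that $\rho$ is the same rate governing randomized subspace Newton; moreover, the $\lambda_{\min}^+$ formulation inherited from \citet{RSN} is meaningful even in the merely convex, possibly singular-Hessian regime, where your direct $\lambda_{\min}$ bound would degenerate to zero (though that regime lies outside the hypotheses of Theorem~\ref{theo:locallinear} actually invoked by the corollary).
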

\begin{proof} This quantity $\rho$ also appears as the convergence rate of the randomized subspace Newton method of \cite{RSN}. In particular, 
it was shown in~Corollary~1 in~\citep{RSN} that under these assumptions, \eqref{eq:rhosinglesketch} holds.
\end{proof}
Thus, by carefully choosing the sketching matrix $\mS$, we can see that the rate of convergence $\rho$ is bounded below as in~\eqref{eq:rhosinglesketch}. Since in this example we have not assumed the spectrum of the Hessian to be bounded from below,  $\rho$ still depends on $\mH_x$ in~\eqref{eq:rhosinglesketch}.

\subsection{Generalized linear models}
\label{sec:GLM}

We now show that in situations where the Hessian matrix has a bounded spectrum, we can choose a sketching matrix so that $\rho$ essentially does \emph{not depend} on the spectra or ill-conditioning of the Hessian matrix.  This leads to a more precise bound on the rate of convergence $\rho$ that highlights how it can be arbitrarily bigger (leading to faster rate) than the rate of convergence of gradient descent. This also extends to accelerated gradient descent.

\begin{definition}
Let $0 \leq \ell \leq u.$ Let $\phi_i: \R \mapsto \R_+$ be a twice differentiable function such that
\begin{equation} \label{eq:phipripribnd}
  \phi_i''(t) \; \in [\ell, u], \quad \mbox{and} \quad \phi_i'''(t) \leq U\quad \mbox{for} \quad i =1,\ldots, n.
\end{equation}
Let $a_i \in \R^{d}$ for $i=1,
\ldots, n$ and $\mA = [a_1,\ldots, a_n] \in \R^{d\times n}.$ We say that $f:\R^d\to \R$ is a generalized linear model when
\begin{equation} \label{eq:fgenlin}
\textstyle  f(x) = \frac{1}{n} \sum \limits_{i=1}^n \phi_i (\dotprod{a_i, x})\;.
\end{equation}
\end{definition}

\begin{theorem}\label{prop:svd_sketch} Let $\mA$ have full row rank. Let $f$ be given by~\eqref{eq:fgenlin}. Let $\mA  = \mU \Sigma \mV^\top$ be the reduced singular value decomposition of $\mA$ with $\mU \in \R^{d\times d}, \Sigma \in \R^{d \times d}$ and $\mV \in \R^{n \times d}.$
Consider a sketching matrix such that 
\begin{equation}
\Prob{\mS = \mU \Sigma^{-1} e_i}  =  \frac{1}{d}, \quad  \mbox{for} \quad  i =1,\ldots, n. \label{eq:svdsketch}
\end{equation}
It follows that $\rho$ give in~\eqref{eq:rho2} is bounded by
\begin{equation}\label{eq:rhobndGLM}
1- \frac{\rho}{2} \leq   1-  \frac{1}{2}\frac{\ell}{u} \frac{1}{d}.
\end{equation}
\end{theorem}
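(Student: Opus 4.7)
The plan is to exploit the fact that the SVD sketch $\mS = \mU\Sigma^{-1}e_i$ ``whitens'' $\mA$, making $\mA^\top\mS$ an orthonormal vector and thereby decoupling the bound on $\rho$ from the spectrum of $\mA$. First, I would write the Hessian in factored form. Setting $\mD_x \eqdef \diag(\phi_1''(\langle a_1,x\rangle),\ldots,\phi_n''(\langle a_n,x\rangle))$ and differentiating~\eqref{eq:fgenlin} gives $\mH_x = \frac{1}{n}\mA\mD_x\mA^\top$, and by~\eqref{eq:phipripribnd} we have $\ell\mI_n \preceq \mD_x \preceq u\mI_n$. Hence $\frac{\ell}{n}\mA\mA^\top \preceq \mH_x \preceq \frac{u}{n}\mA\mA^\top$, and $\mA\mA^\top = \mU\Sigma^2\mU^\top$ is invertible since $\mA$ has full row rank.

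Next I would compute the action of the sketch. For $s_i \eqdef \mU\Sigma^{-1}e_i$ ($i=1,\ldots,d$), the SVD identity $\mA^\top\mU = \mV\Sigma$ gives $\mA^\top s_i = \mV e_i = v_i$, the $i$-th column of $\mV$. Since $\mV^\top\mV = \mI_d$, the vectors $v_i$ have unit norm, so
$$
s_i^\top \mH_x s_i = \tfrac{1}{n}\, v_i^\top \mD_x v_i \;\leq\; \tfrac{u}{n}.
$$
Moreover, $\sum_{i=1}^d s_i s_i^\top = \mU\Sigma^{-2}\mU^\top = (\mA\mA^\top)^{-1}$.

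Combining these ingredients, since $\mS$ equals $s_i$ with probability $1/d$,
$$
\E{\mH_x^{1/2}\mS(\mS^\top\mH_x\mS)^{-1}\mS^\top\mH_x^{1/2}} = \frac{1}{d}\sum_{i=1}^d \frac{\mH_x^{1/2} s_i s_i^\top \mH_x^{1/2}}{s_i^\top\mH_x s_i} \;\succeq\; \frac{n}{ud}\, \mH_x^{1/2}(\mA\mA^\top)^{-1}\mH_x^{1/2}.
$$
The matrix on the right has the same spectrum as the similar matrix $(\mA\mA^\top)^{-1/2}\mH_x(\mA\mA^\top)^{-1/2}$, which is at least $\tfrac{\ell}{n}\mI_d$ by the lower PSD bound on $\mH_x$ established in the first paragraph. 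Therefore the expectation in~\eqref{eq:rho2} has smallest eigenvalue at least $\tfrac{n}{ud}\cdot\tfrac{\ell}{n} = \tfrac{\ell}{ud}$. Taking the infimum over $x\in\R^d$ yields $\rho \geq \ell/(ud)$, which rearranges to~\eqref{eq:rhobndGLM}.

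The calculation is largely mechanical; the main conceptual step is recognizing that the SVD sketch acts as a preconditioner for $\mA$, so that the denominator $\mS^\top\mH_x\mS$ is uniformly upper bounded by $u/n$ while the sum $\sum_i s_i s_i^\top$ telescopes exactly into $(\mA\mA^\top)^{-1}$. This simultaneous control of numerator and denominator---independent of how ill-conditioned $\mA$ is---is precisely what yields a bound on $\rho$ that depends only on the conditioning ratio $\ell/u$ of the link functions and on the dimension $d$.
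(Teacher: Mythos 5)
Your proof is correct and takes essentially the same route as the paper's: both exploit the SVD identity $\mA^\top \mU\Sigma^{-1}e_i = \mV e_i$ to obtain the uniform denominator bound $\mS^\top \mH_x \mS = \frac{1}{n}\,v_i^\top \Phi''(\mA^\top x)\, v_i \leq \frac{u}{n}$, pull $\frac{n}{u}$ out of the expectation, and then lower-bound the spectrum of the whitened Hessian by $\ell$ --- your congruence with $(\mA\mA^\top)^{-1/2} = \mU\Sigma^{-1}\mU^\top$ is just an orthogonal conjugation away from the paper's computation $\lambda_{\min}\left(\Sigma^{-1}\mU^\top \mH_x \mU\Sigma^{-1}\right) = \frac{1}{n}\lambda_{\min}\left(\mV^\top \Phi''(\mA^\top x)\mV\right) \geq \frac{\ell}{n}$. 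You also implicitly fix the statement's typo by letting $i$ range over $1,\ldots,d$ rather than $1,\ldots,n$, consistent with what the paper's own calculation requires.
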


\begin{proof}
 Since $\mA$ has full row rank we have that $\Sigma$ is invertible, $\mU$ is orthonormal and $\mV^\top \mV = \mI.$ Consequently $\mA^\dagger = \mV \Sigma^{-1} \mU^\top $. The Hessian of $f$  is thus given by
\begin{equation}
\mH_x  =   \frac{1}{n} 
\mA \Phi'' (\mA^\top x) \mA^\top\;,\label{eq:genlinhess2}
\end{equation}
where $\Phi'' (\mA^\top x) = \Diag{\phi''_1(\dotprod{a_1, x}), \ldots ,\phi''_n(\dotprod{a_n, x})}.$  It is now not hard to show that 
\begin{equation}
 \frac{\ell \sigma_{\min}(\mA)^2}{n} \mI  \preceq  \mH_x   \preceq   \frac{u \sigma_{\max}(\mA)^2}{n} \mI,
\end{equation}
and thus $f$ is strongly convex with Lipschitz gradient. Furthermore, $\phi'''_i(t) \leq U$ guarantees that the Hessian is Lipschitz.
Consequently, Theorem~\ref{theo:locallinear} holds. It now remains to bound $\rho$. With probability $\frac{1}{d}$, we have
 \begin{align} 
\mS^\top \mH_x \mS &=   \frac{1}{n} e_i^\top  \Sigma^{-1} \mU^\top \mU \Sigma \mV^\top \Phi'' (\mA^\top x)   \mV \Sigma \mU^\top \mU \Sigma^{-1}  e_i  \nonumber \\
 &= \frac{1}{n} e_i^\top \mV^\top  \Phi'' (\mA^\top x) \mV e_i.\label{eq:s99ks94s}
 \end{align}
 Now, since $\mV^\top \mV =\mI$, we get
$
 \ell \mI \preceq  \mV^\top  \Phi'' (\mA^\top x) \mV \preceq  u \mI.
$ Consequently,
\begin{eqnarray*}
\rho  &=&  \lambda_{\min}\left(\E{\mH_{x}^{1/2}\mS(\mS^\top  \mH_x \mS)^{-1}\mS^\top \mH_{x}^{1/2}} \right)\\
 &=& \lambda_{\min}\left(\E{\frac{\mH_{x}^{1/2}\mS \mS^\top \mH_{x}^{1/2}}{\frac{1}{n} e_i^\top \mV^\top  \Phi'' (\mA^\top x) \mV e_i} }\right) \\
&\geq & \frac{n}{u}\lambda_{\min}\left(\mH_{x}^{1/2}\E{\mS \mS^\top } \mH_{x}^{1/2}\right) \\
&= & \frac{n}{u} \lambda_{\min}\left(\mH_{x}^{1/2}\mU \Sigma^{-1}\E{e_i e_i^\top} \Sigma^{-1}\mU^\top \mH_{x}^{1/2}\right) \\
&= & \frac{n}{u d} \lambda_{\min}\left( \Sigma^{-1} \mU^\top \mH_{x}\mU \Sigma^{-1}\right) \\
& = & \frac{1}{u d} \lambda_{\min}\left( \mV^\top  \Phi'' (\mA^\top x) \mV\right)  \\
&\geq & \frac{\ell}{u} \frac{1}{d}.
\end{eqnarray*}

\end{proof}

We refer to the sketch in~\eqref{eq:svdsketch} as the SVD-sketch.
To contrast the bound on $\rho$ given in~\eqref{eq:rhobndGLM}, let us compare it to the rate of convergence of gradient descent. 
Gradient descent converges at a rate of 
\begin{equation}
 \rho_{GD} \eqdef 1 - \frac{\max_x \lambda_{\min}(\mH_x)}{\min_x \lambda_{\max}(\mH_x)} \nonumber\\
 \leq  1 -\frac{\ell}{u} \frac{\sigma_{\min}(\mA)^2}{\sigma_{\max}(\mA)^2},\label{eq:rate}
\end{equation}
and is thus at the mercy of the condition number 
\[\kappa \eqdef \frac{\sigma_{\max}(\mA)}{\sigma_{\min}(\mA)}\] 
of the matrix $\mA.$ To highlight this, consider the extreme case where
$\mA$ is a square $d \times d$ Hilbert matrix~\citep{david1894ein}. In this case  \[\kappa =O\left((1+\sqrt{2})^{4d} \right), \] and thus  $\rho_{GD}$ grows exponentially with $d$ while $\rho$  grows at most linearly due to~\eqref{eq:rhobndGLM} .  Even the rate of convergence of accelerated gradient descent~\citep{Nesterov1998} would grow exponentially with an exponent of $2d$ in this case. 

Though in practice it is well known that the convergence of gradient descent is far more sensitive to ill conditioning as compared to the convergence of the BFGS method, to the best of our knowledge, this is the first clear theoretical justification to support this observation.

\subsection{Linear programs}

The original problem that motivated the introduction of self-concordant functions was linear programming,
\begin{align}
\min_{x \in \R^d}& \dotprod{c,x} \quad \mbox{subject to } \quad \mA x \leq b,  \label{eq:lp}
%& \ell \leq x \leq u,
\end{align}
where $c \in \R^d, \mA \in \R^{n \times d}$ and $b\in \R^n$ are the given data. %and $\ell, u \geq 0 $ are given. 
Interior point methods and barrier methods for solving~\eqref{eq:lp} are based on solving a sequence of problems such as
\begin{align}
\textstyle f(x) = \lambda \dotprod{c,x} - \sum \limits_{i=1}^n \log(b_i - \dotprod{a_i,x}), \label{eq:logbarrier}
\end{align}
where $ \lambda >0$ is the barrier parameter, see~\citep{Renegar:2001}.
The objective function in~\eqref{eq:logbarrier} is self-concordant.  Furthermore, the logarithmic barrier enforces that the constraint $\mA x < b$ hold. The Hessian of~\eqref{eq:logbarrier} is given by
\begin{align}\label{eq:hesslogbarrier}
 \nabla^2 f(x) = \mA^\top \Diag{\frac{1}{(b_i - \dotprod{a_i,x})^2}}\mA.
\end{align}

Let $x^k$ be the iterates of Algorithm~\ref{alg:BFGS} applied to minimizing~\eqref{eq:logbarrier} with the monotonic option on line~8.
\begin{theorem}\label{prop:svd_sketch_log} Let $\mA$ have full row rank, $f$ be given by~\eqref{eq:logbarrier}, and $\mA  = \mU \Sigma \mV^\top$ be the reduced singular value decomposition of $\mA$ with $\mU \in \R^{d\times d}, \Sigma \in \R^{d \times d}$ and $\mV \in \R^{n \times d}.$
Consider the SVD-sketch given in~\eqref{eq:svdsketch}.
Let
\begin{align}
u  = \frac{1}{\displaystyle \min_{i=1,\ldots, n}\; \min_{x\; : \; \mA x <b } (b_i - \dotprod{a_i,x})^2 } ,\nonumber\\
\ell  = \frac{1}{\displaystyle \max_{i=1,\ldots, n}\; \max_{x\; : \; \mA x < b } (b_i - \dotprod{a_i,x})^2 }.\label{eq:ellulog}
\end{align}
It follows that $\rho$ given in~\eqref{eq:rho2} is bounded by
\begin{equation}\label{eq:rhobndipm}
1- \frac{\rho}{2} \leq   1-  \frac{1}{2}\frac{\ell}{u} \frac{1}{d}.
\end{equation}
\end{theorem}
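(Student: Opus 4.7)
The plan is to follow the template of the proof of Theorem~\ref{prop:svd_sketch}, since the Hessian~\eqref{eq:hesslogbarrier} of the log-barrier has the same product structure $\mA^\top D(x) \mA$ with diagonal $D(x) = \Diag{1/(b_i - \dotprod{a_i, x})^2}$ that the GLM Hessian $\frac{1}{n}\mA \Phi''(\mA^\top x) \mA^\top$ had (up to a transpose and the $1/n$ factor). First I would observe that $-\sum_i \log(b_i - \dotprod{a_i, x})$ is self-concordant on the open feasible region $\{x : \mA x < b\}$ and that adding the linear term $\lambda \dotprod{c,x}$ preserves self-concordance, so Theorem~\ref{theo:sc} applies and delivers the rate $1-\rho/2$ as soon as $\rho$ is bounded below.

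Next I would verify that the definitions~\eqref{eq:ellulog} of $\ell$ and $u$ are engineered precisely so that $\ell \mI \preceq D(x) \preceq u \mI$ for every feasible $x$. Since the monotonic option on line~8 of Algorithm~\ref{alg:BFGS} keeps each iterate in the initial sublevel set of the barrier---hence inside $\{x : \mA x < b\}$---these two-sided bounds hold uniformly along the trajectory. This is the direct analogue of the assumption $\phi_i''(t) \in [\ell, u]$ that drove the GLM argument.

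With the SVD conventioned so that $\mU\Sigma \mV^\top$ matches the dimensions declared in the theorem, I would then plug the SVD-sketch $\mS = \mU \Sigma^{-1} e_i$ into $\mS^\top \mH_x \mS$ and, exactly mirroring~\eqref{eq:s99ks94s}, reduce it to the scalar $e_i^\top \mV^\top D(x) \mV e_i$, which lies in $[\ell, u]$ because $\mV^\top \mV = \mI$. The rest of the chain of inequalities from the proof of Theorem~\ref{prop:svd_sketch}---upper bound this denominator by $u$, pull the expectation $\E{e_i e_i^\top} = \mI/d$ through, and lower bound $\lambda_{\min}(\mV^\top D(x) \mV)$ by $\ell$---then transfers verbatim and yields $\rho \geq \ell/(ud)$, which is~\eqref{eq:rhobndipm}.

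The main obstacle is purely structural rather than computational: one must check that the self-concordance hypothesis of Theorem~\ref{theo:sc} is genuinely compatible with RBFGS applied to~\eqref{eq:logbarrier}, meaning that every iterate stays in the strict feasible region so that $\mH_k$ is well-defined and positive definite at every step, and that the warm-start condition~\eqref{eq:convradap} is meaningful for this $f$. The monotonic option handles the former, and the latter is inherited from Theorem~\ref{theo:sc}; once both are in place, the bound on $\rho$ is a direct reuse of the GLM calculation.
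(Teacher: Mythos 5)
Your proposal is correct and matches the paper exactly: the paper's entire proof of Theorem~\ref{prop:svd_sketch_log} is the single sentence ``The proof follows verbatim the proof of Theorem~\ref{prop:svd_sketch}'', and your argument carries out precisely that transfer, substituting $\Diag{1/(b_i - \dotprod{a_i,x})^2}$ for $\Phi''(\mA^\top x)$ and rerunning the chain of inequalities around~\eqref{eq:s99ks94s} to get $\rho \geq \frac{\ell}{u}\frac{1}{d}$. If anything, you supply more detail than the paper does, e.g.\ the self-concordance of the log-barrier and the role of the monotonic option in keeping iterates strictly feasible so that $\mH_k$ stays positive definite.
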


As the barrier parameter $ \lambda$ becomes larger,  the minimizer of~\eqref{eq:logbarrier} will get closer to the boundary of the set $\mA x \leq b.$ Consequently, the Hessian~\eqref{eq:hesslogbarrier} becomes increasing ill-conditioned, which is a well known issue with log-barrier methods. This in turn also implies that the constants $u$ and $\ell$ in~\eqref{eq:ellulog} will increase and decrease, respectively, as $ \lambda$ grows. Thus like all methods for solving the log-barrier problem~\eqref{eq:logbarrier}, this ill-conditioning needs to be treated with care.

\section{Numerical Experiments}
\label{sec:numerics}
\newcommand{\figmult}{0.45} 
\graphicspath{{./figures/}}

\graphicspath{{./figures/}}

We run numerical tests for the Randomized BFGS algorithm and compare three different distributions of $\mS$:
\begin{enumerate}
	\item \textbf{gauss}: All entries of $\mS$ are generated independently and have standard Gaussian distribution.
	\item\label{item:svd_sketch} \textbf{svd}: SVD-sketch from Proposition \ref{prop:svd_sketch}, i.e., for singular value decomposition of $\mA  = \mU \Sigma \mV^\top$ take $\Prob{\mS = \mU \Sigma^{-1} e_i} = \tfrac{1}{d} \quad  \mbox{for  } i =1,\ldots, n.$
	\item\label{item:coord} \textbf{coord}: Coordinate sketch, i.e., $\Prob{\mS = e_i} = \tfrac{1}{d}$.
\end{enumerate}
We also experiment with the sketch size, i.e. the number of columns $\tau$ of the sketching matrix $\mS$. We use \textbf{distr}\_$\tau$ in the legends of our plots to denote which distribution (\textbf{distr}) and which sketch size $\tau$ was used. For example, \textbf{gauss}\_$10$ corresponds to using a Gaussian sketch with sketch size $10$.
Finally, we compare the performance of RBFGS to the classical BFGS and accelerated Nesterov method both on synthetic and real data. For a fair comparison across algorithms, we use the Wolfe line search~\citep{Wolfe1971} to determine the stepsizes for the Randomized BFGS, classical BFGS and Nesterov methods.

\subsection{Synthetic Quadratic Problem}\label{subsection:exper_synthetic}

\begin{figure}
	\centering
	\begin{subfigure}[t]{0.23\textwidth}
		\centering
		\includegraphics[width = \textwidth]{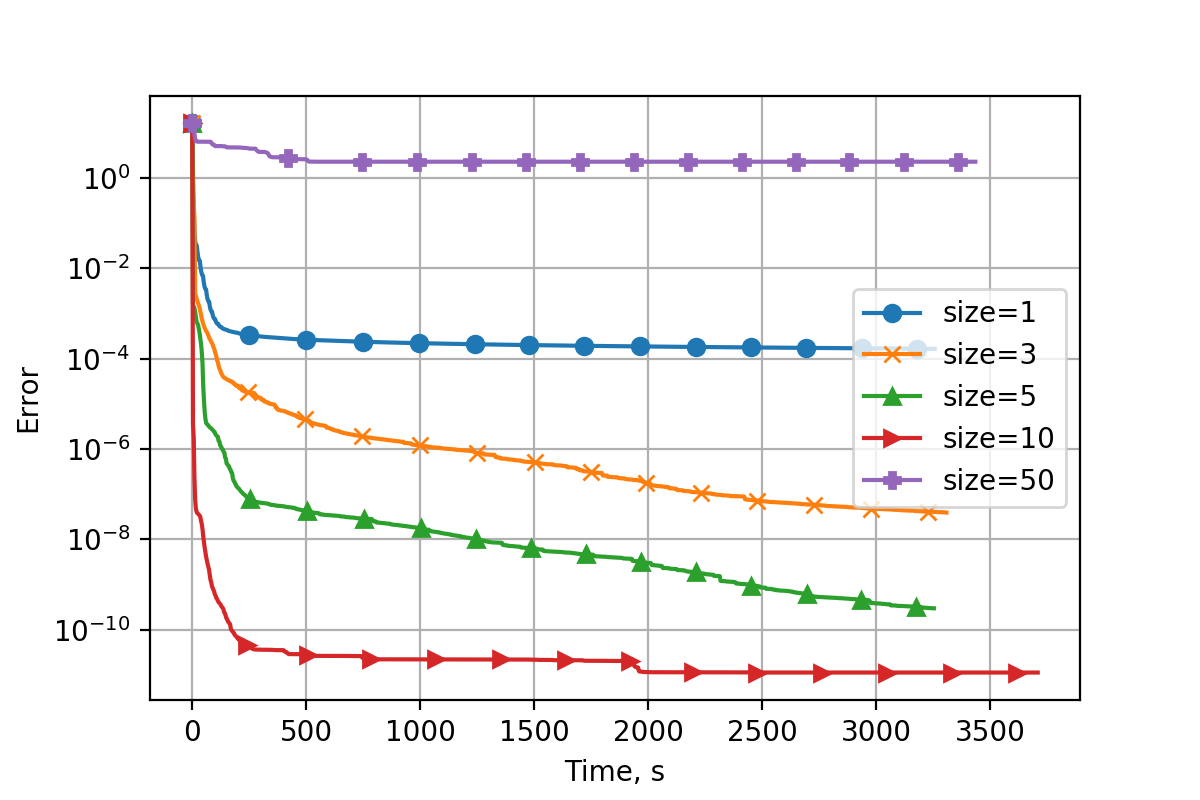}
		\caption{gauss}
	\end{subfigure}
	\begin{subfigure}[t]{0.23\textwidth}
		\centering
		\includegraphics[width = \textwidth]{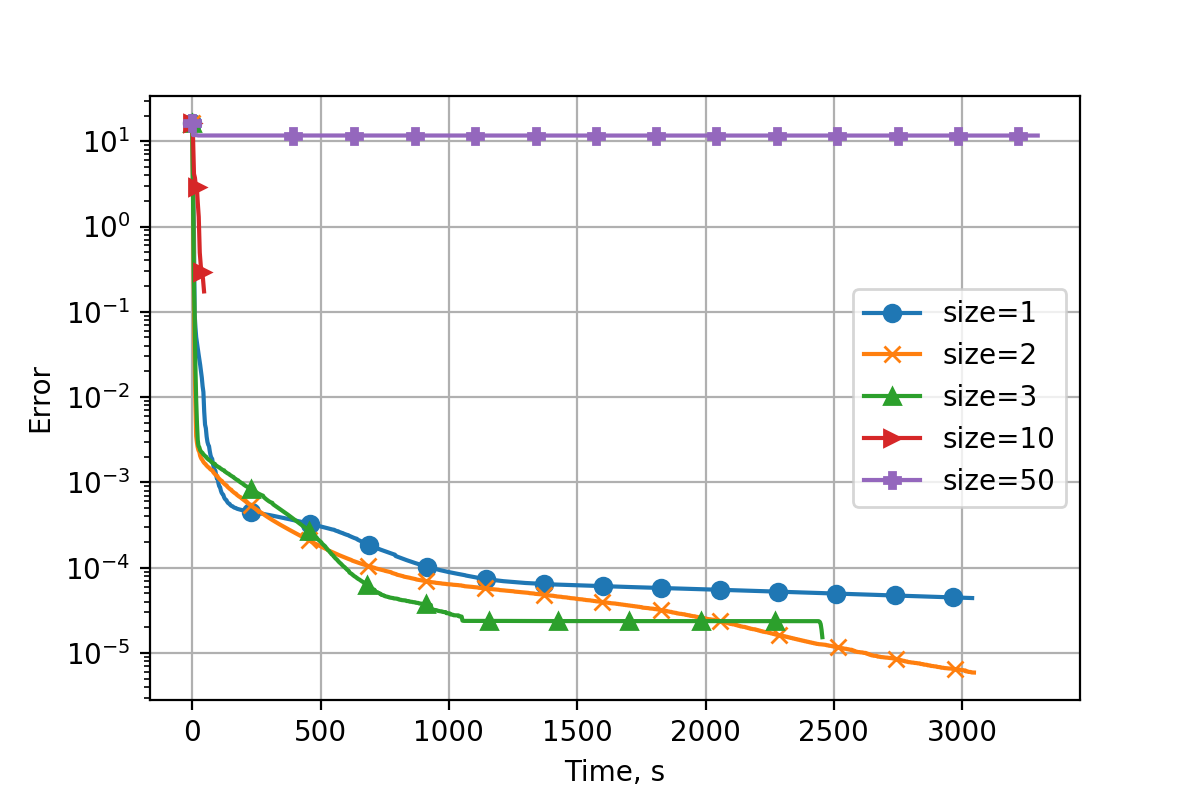}
		\caption{coord}
	\end{subfigure}
	\\
	\begin{subfigure}[t]{0.23\textwidth}
		\centering
		\includegraphics[width = \textwidth]{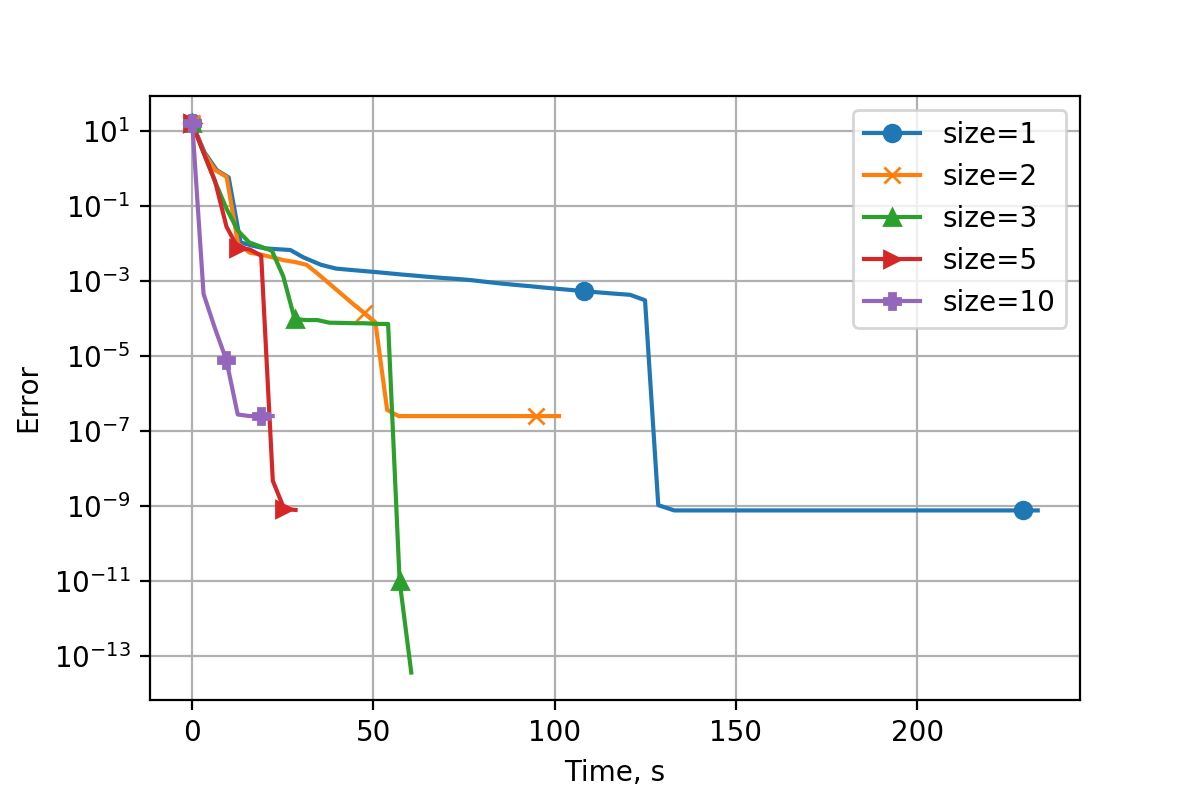}
		\caption{svd}
	\end{subfigure}
	\begin{subfigure}[t]{0.23\textwidth}
		\centering
		\includegraphics[width = \textwidth]{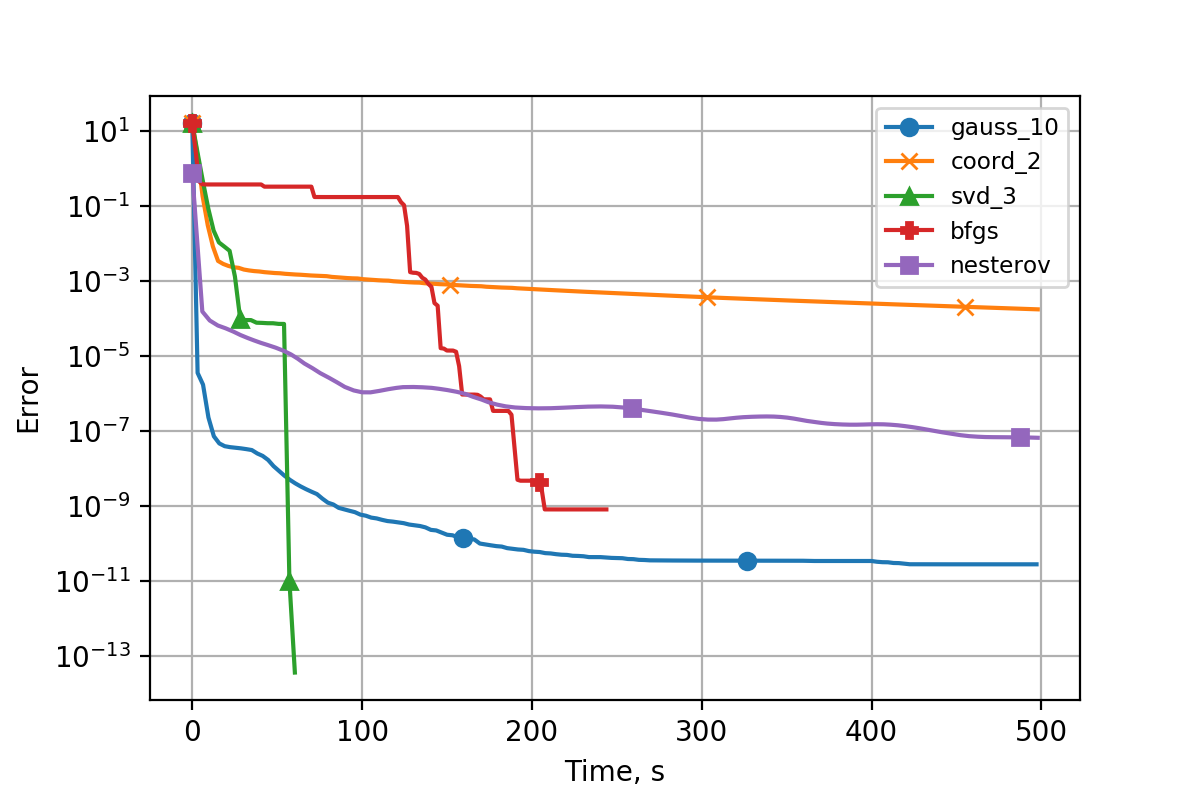}
		\caption{methods compared}
	\end{subfigure}
	\caption{Hilbert matrix $10000\times 10000$}
	\label{fig:Hilbert}
\end{figure}

Consider a quadratic problem
\begin{equation*}
\min\limits_{x \in \R^d} \left\{f(x) \eqdef \tfrac{1}{2} \norm{\mA x}_2^2 =\tfrac{1}{2} x^\top (\mA^\top \mA) x \right\}, 
\end{equation*}
where $\mA\in\R^{d\times d}$ is a Hilbert matrix~\citep{david1894ein} defined as $[\mA]_{ij} = \tfrac{1}{i + j - 1}$ for $  i = 1,\ldots,n.$ This problem is interesting due to the large condition number of $\mA$. 
For each distribution, we experiment with different sketch sizes and choose the one that gives the best results in terms of computational time. Another possibility is to compare the number of iterations. However, due to the fact that the complexity of  Algorithm~\ref{alg:BFGS} depends on $\tau$, this would not lead to a fair comparison.

When using the \textbf{svd} sketch, we only include singular values above tolerance $10^{-8}$. This is done to improve numerical stability. As a benchmark, we also compare to the classical BFGS method and accelerated Nesterov gradient descent, see Figure~\ref{fig:Hilbert}. We observe in Figure~\ref{fig:Hilbert} that for this particular quadratic problem small sketch sizes ($10$ or below) perform better. We note that \textbf{svd} distribution yielded the best results. This is to be expected, since according to Proposition~\ref{prop:svd_sketch}, the rate of convergence when using the svd sketch is essentially invariant to ill-conditioning.

\subsection{Classification Problem on Real Data}

Now consider the logistic regression problem with L2 regularizer given by
\begin{equation}
\textstyle	f(x) = \frac{1}{n}\sum\limits_{i=1}^n \log\left(1 + \exp(-b_i\dotprod{a_i, x})\right) + \frac{\lambda}{2}\norm{x}_2^2,
	% \to \min_{x\in\R^d}, 
\end{equation}
where $a_1,\ldots,a_n\in\R^d$ are the data points, $b_1,\ldots,b_n\in\{-1, 1\}$ are the class labels and $\lambda> 0$ is a regularization coefficient. In our experiments, we used LIBSVM datasets \cite{Chang2011}. We set the regularization parameter $\lambda\sim 10^{-3} L$, where $L$ denotes the smoothness constant of logistic loss without regularizer. Similar to Section \ref{subsection:exper_synthetic}, we experiment with different sketch sizes and choose ones which perform better. 
  
The experiments show that a good general strategy of choosing sketch size is $\tau\sim\sqrt{d}$; see Figures~\ref{fig:w8a},~\ref{fig:a9a},~\ref{fig:covtype},~\ref{fig:gisette} and ~\ref{fig:colon_cancer}.

\begin{figure}[t]
	\centering
	\begin{subfigure}[t]{0.23\textwidth}
		\centering
		\includegraphics[width = \textwidth]{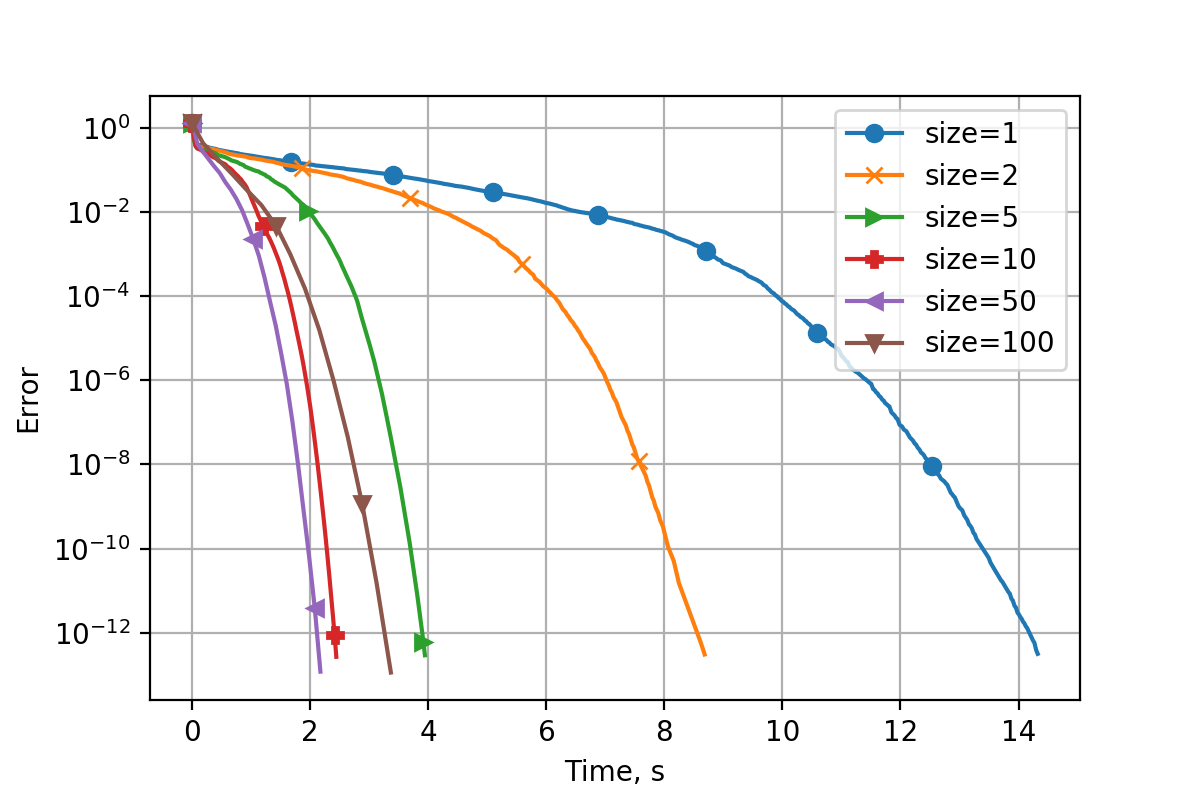}
		\caption{gauss}
	\end{subfigure}
	\begin{subfigure}[t]{0.23\textwidth}
		\centering
		\includegraphics[width = \textwidth]{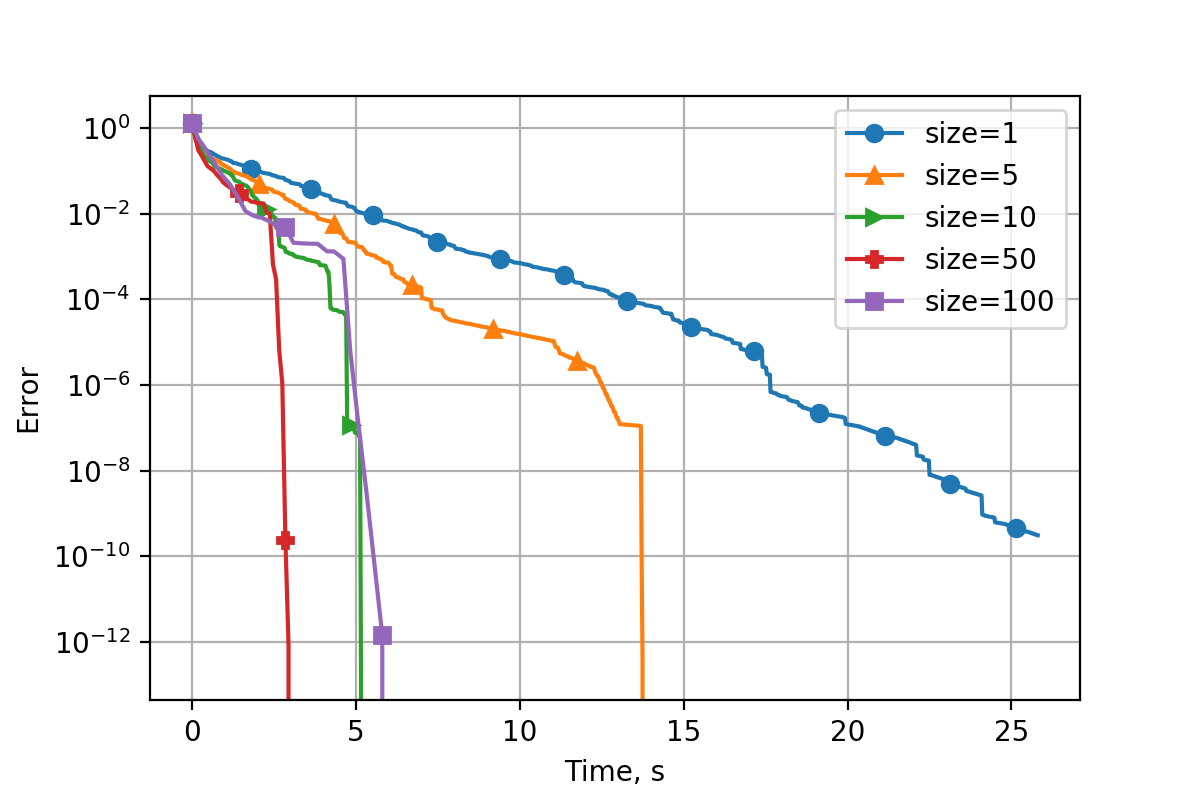}
		\caption{coord}
	\end{subfigure}
	\\
	\begin{subfigure}[t]{0.23\textwidth}
		\centering
		\includegraphics[width = \textwidth]{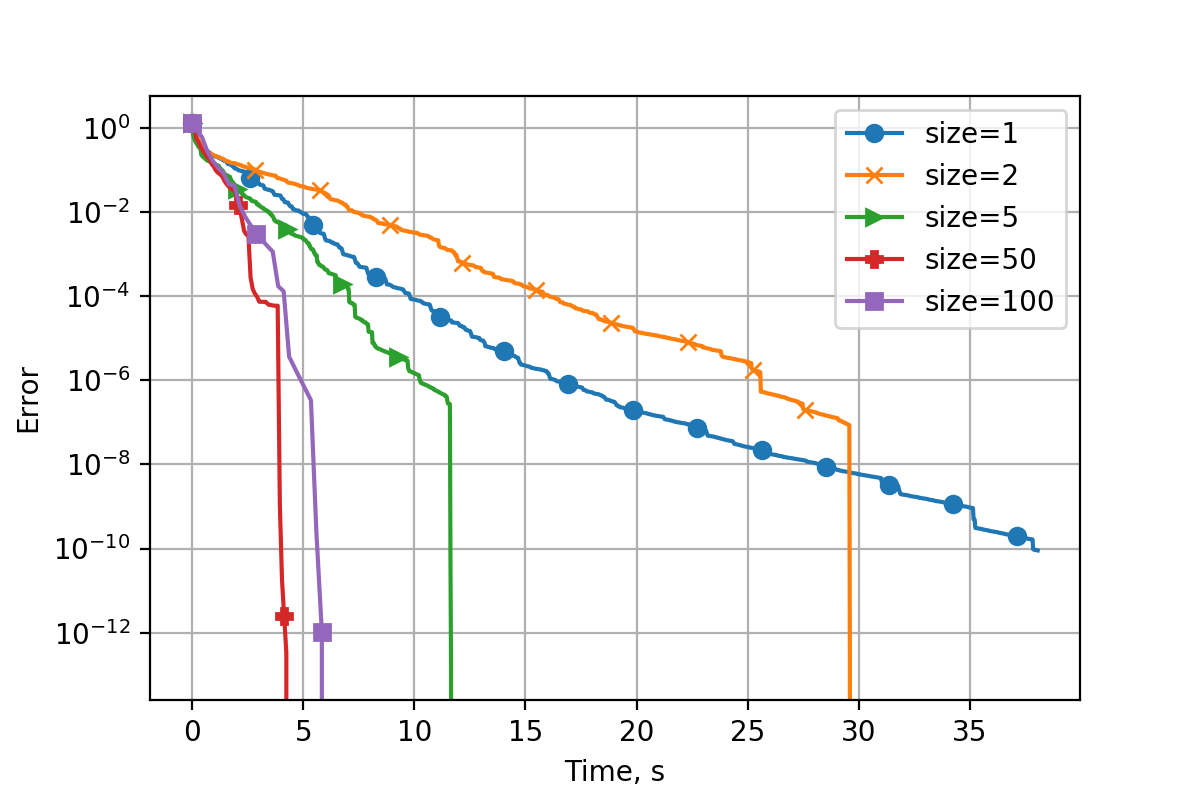}
		\caption{svd}
	\end{subfigure}
	\begin{subfigure}[t]{0.23\textwidth}
		\centering
		\includegraphics[width = \textwidth]{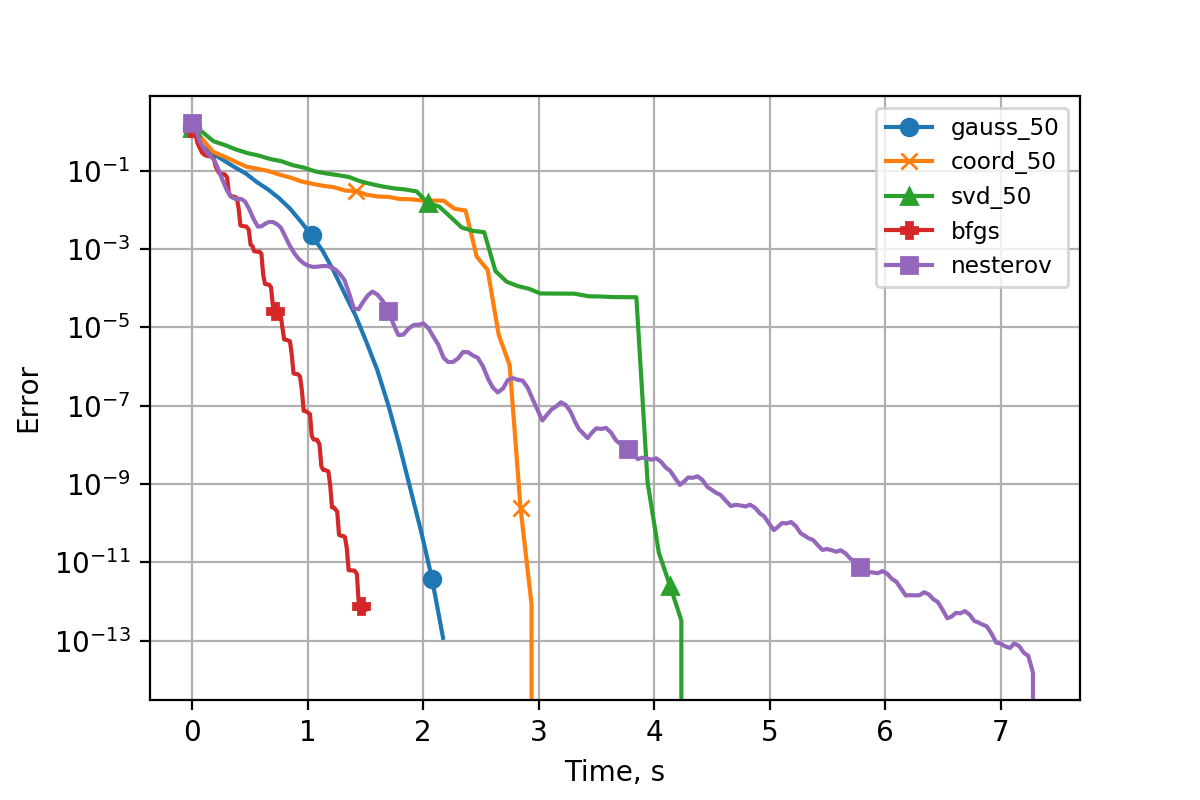}
		\caption{methods compared}
	\end{subfigure}
	\caption{w8a, $\lambda = 10^{-3}$. (n, d) = (49749, 300)}
	\label{fig:w8a}
\end{figure}

\begin{figure}
	\centering
	\begin{subfigure}[t]{0.23\textwidth}
		\centering
		\includegraphics[width = \textwidth]{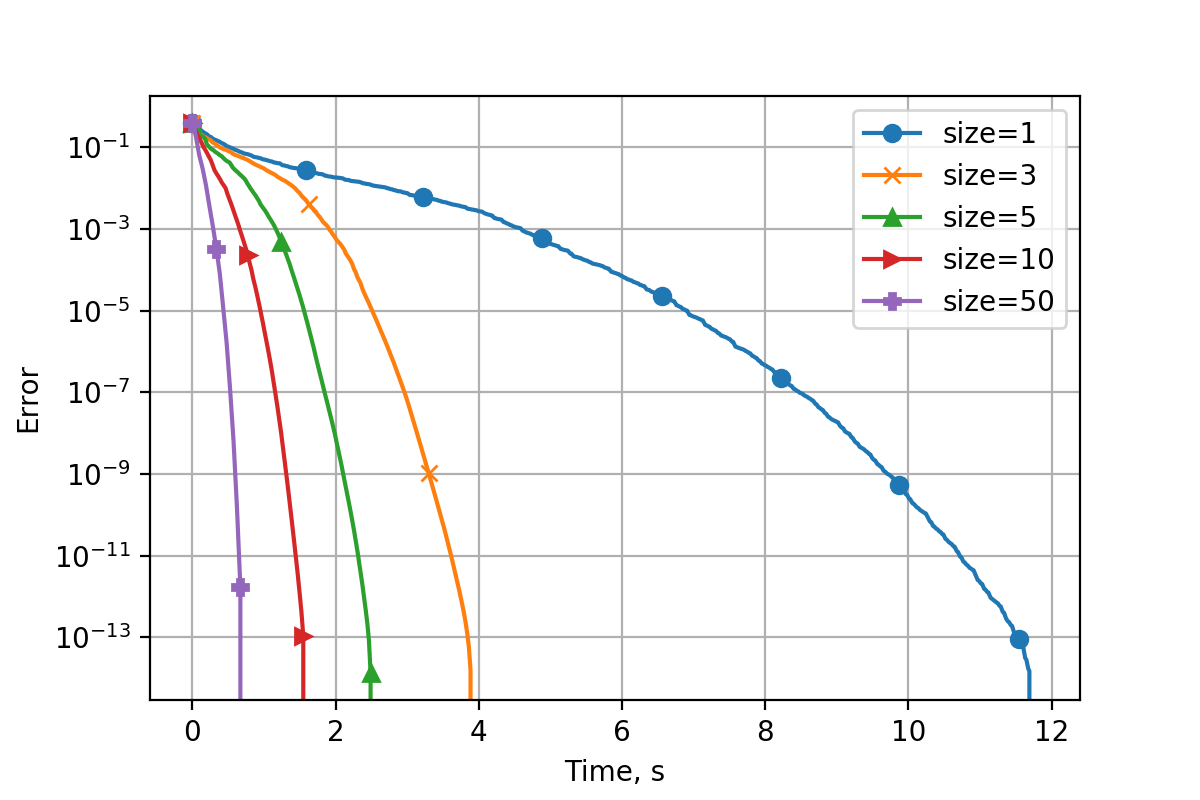}
		\caption{gauss}
	\end{subfigure}
	\begin{subfigure}[t]{0.23\textwidth}
		\centering
		\includegraphics[width = \textwidth]{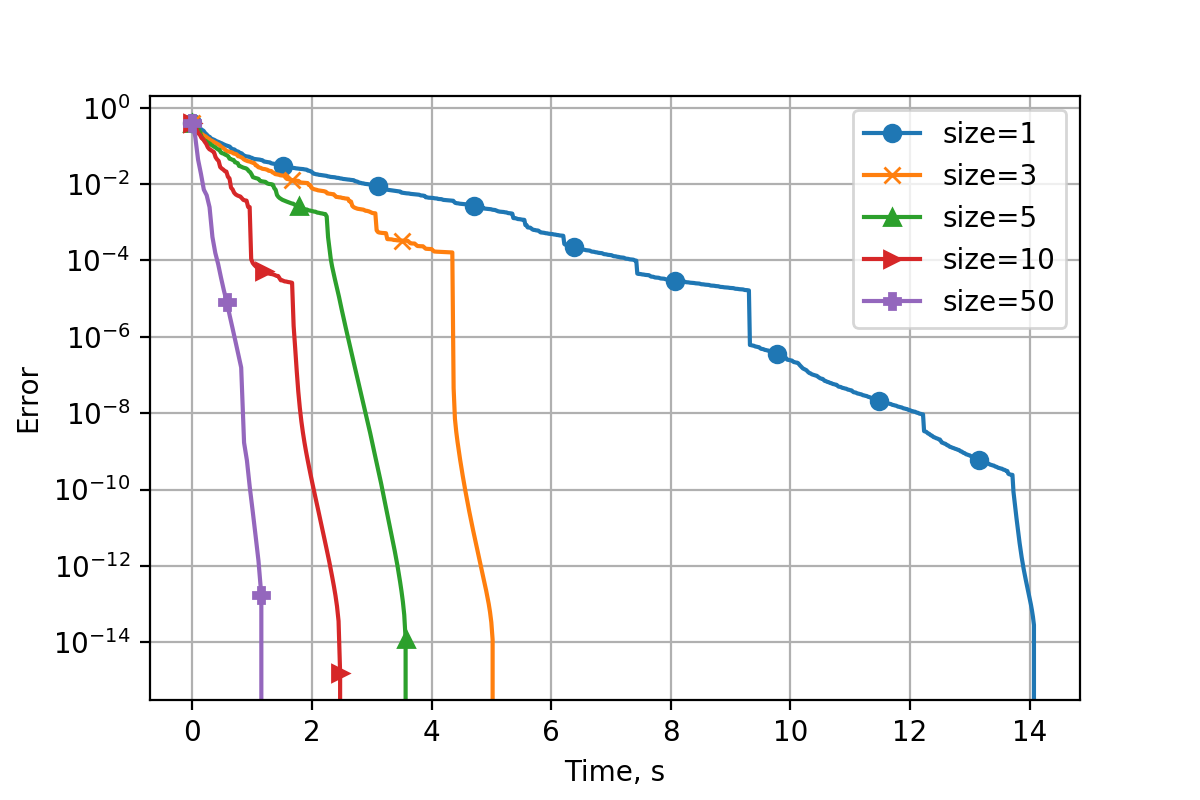}
		\caption{coord}
	\end{subfigure}
	\\
	\begin{subfigure}[t]{0.23\textwidth}
		\centering
		\includegraphics[width = \textwidth]{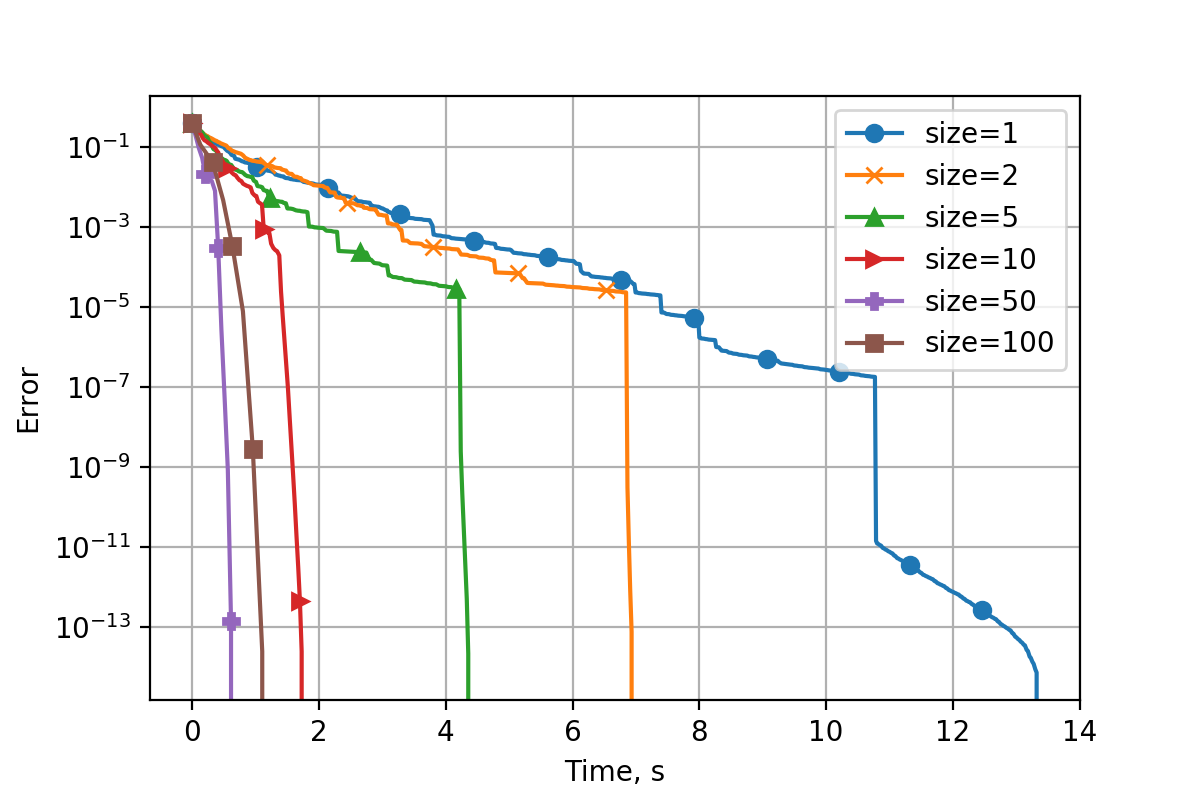}
		\caption{svd}
	\end{subfigure}
	\begin{subfigure}[t]{0.23\textwidth}
		\centering
		\includegraphics[width = \textwidth]{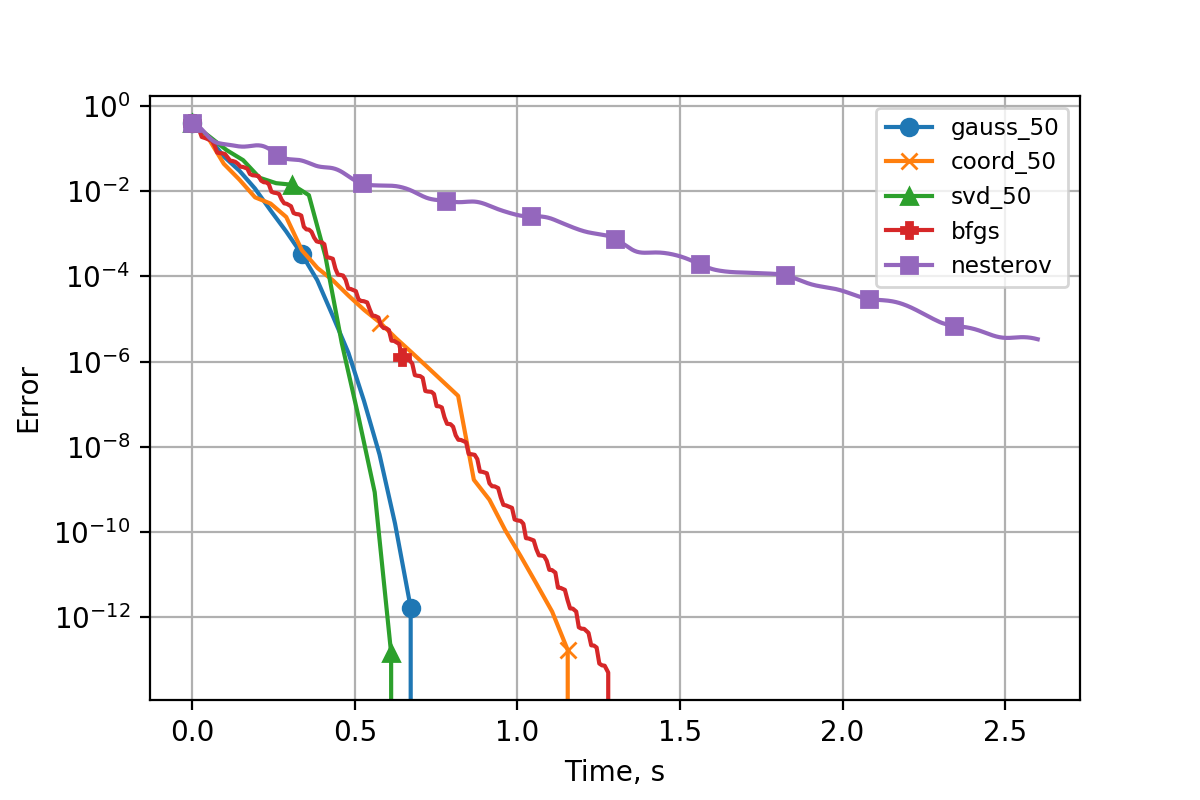}
		\caption{methods compared}
	\end{subfigure}
	\caption{a9a, $\lambda=10^{-3}$. (n, d) = (29159, 123)}
	\label{fig:a9a}
\end{figure}

\begin{figure}
	\centering
	\begin{subfigure}[t]{0.23\textwidth}
		\centering
		\includegraphics[width = \textwidth]{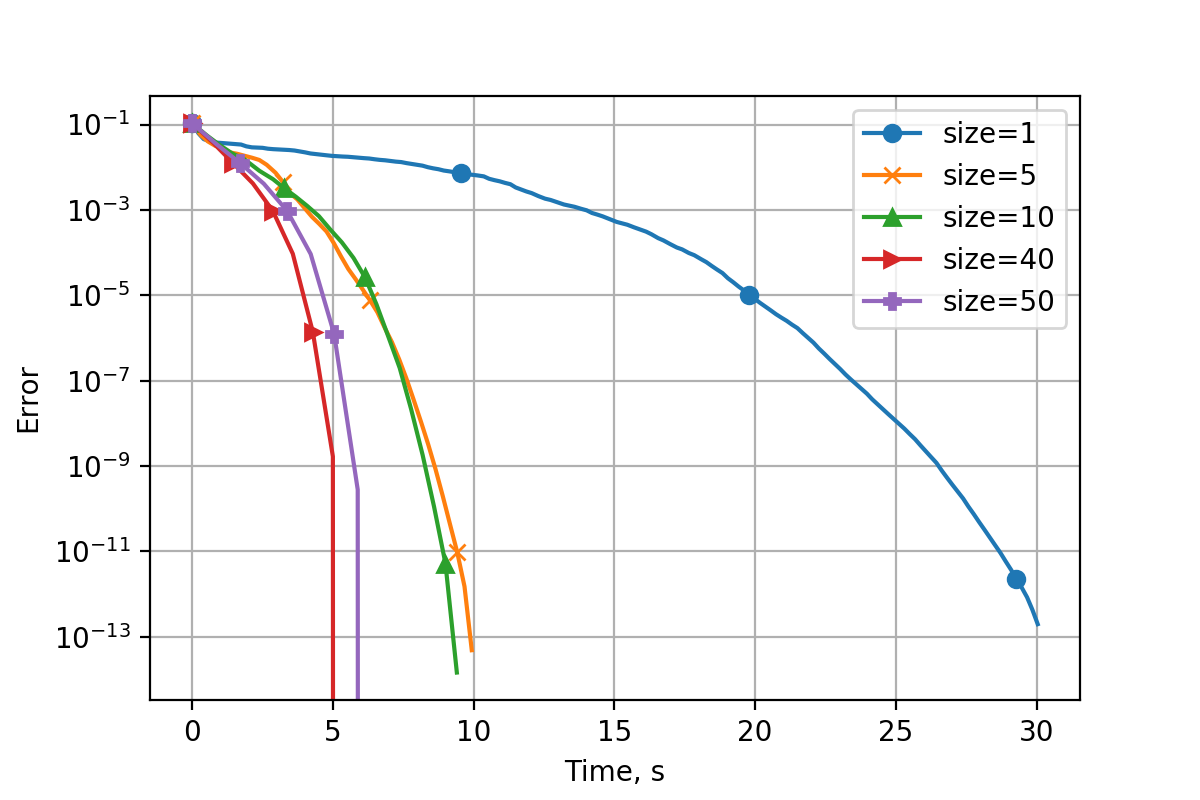}
		\caption{gauss}
	\end{subfigure}
	\begin{subfigure}[t]{0.23\textwidth}
		\centering
		\includegraphics[width = \textwidth]{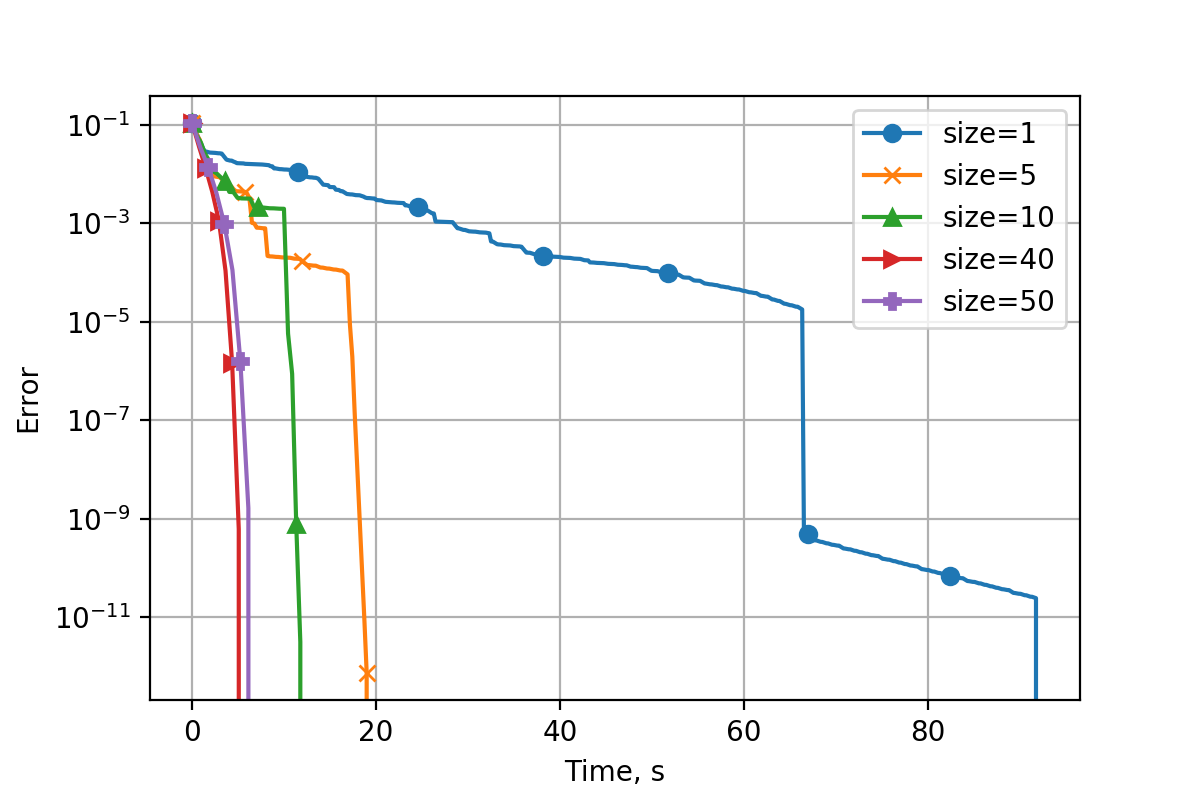}
		\caption{coord}
	\end{subfigure}
	\\
	\begin{subfigure}[t]{0.23\textwidth}
		\centering
		\includegraphics[width = \textwidth]{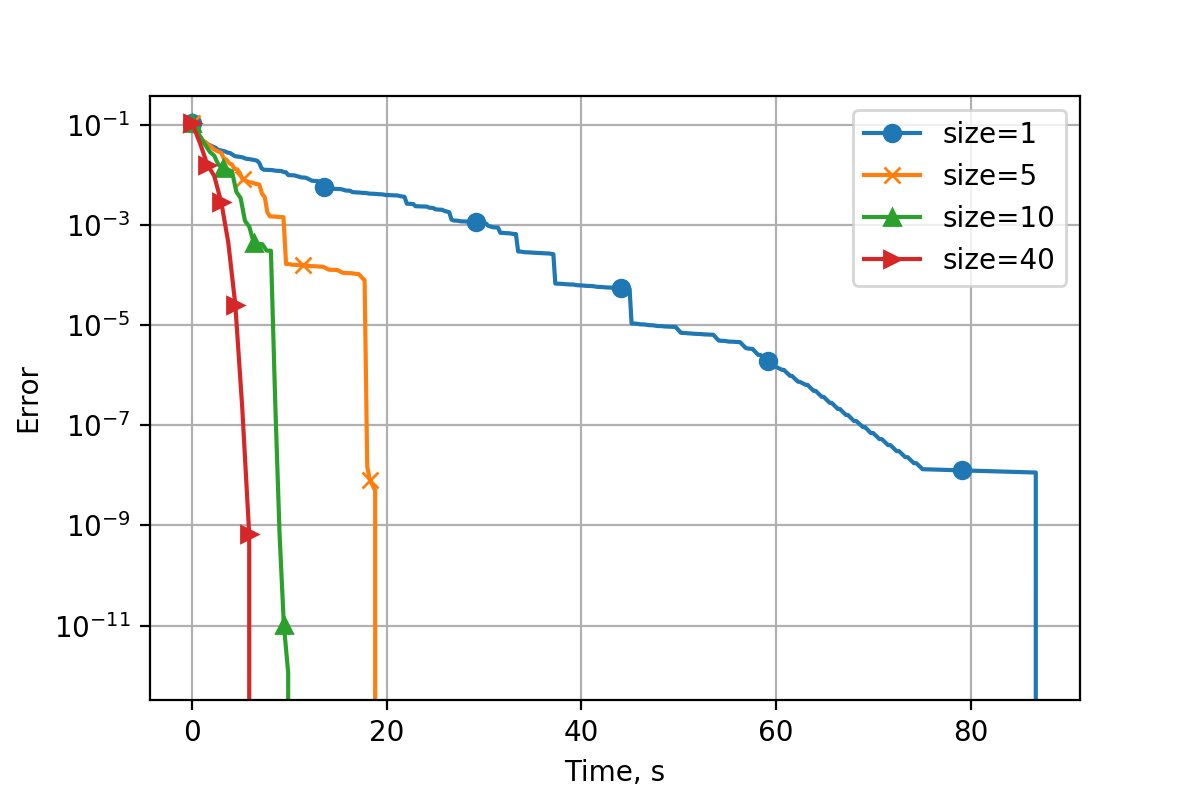}
		\caption{svd}
	\end{subfigure}
	\begin{subfigure}[t]{0.23\textwidth}
		\centering
		\includegraphics[width = \textwidth]{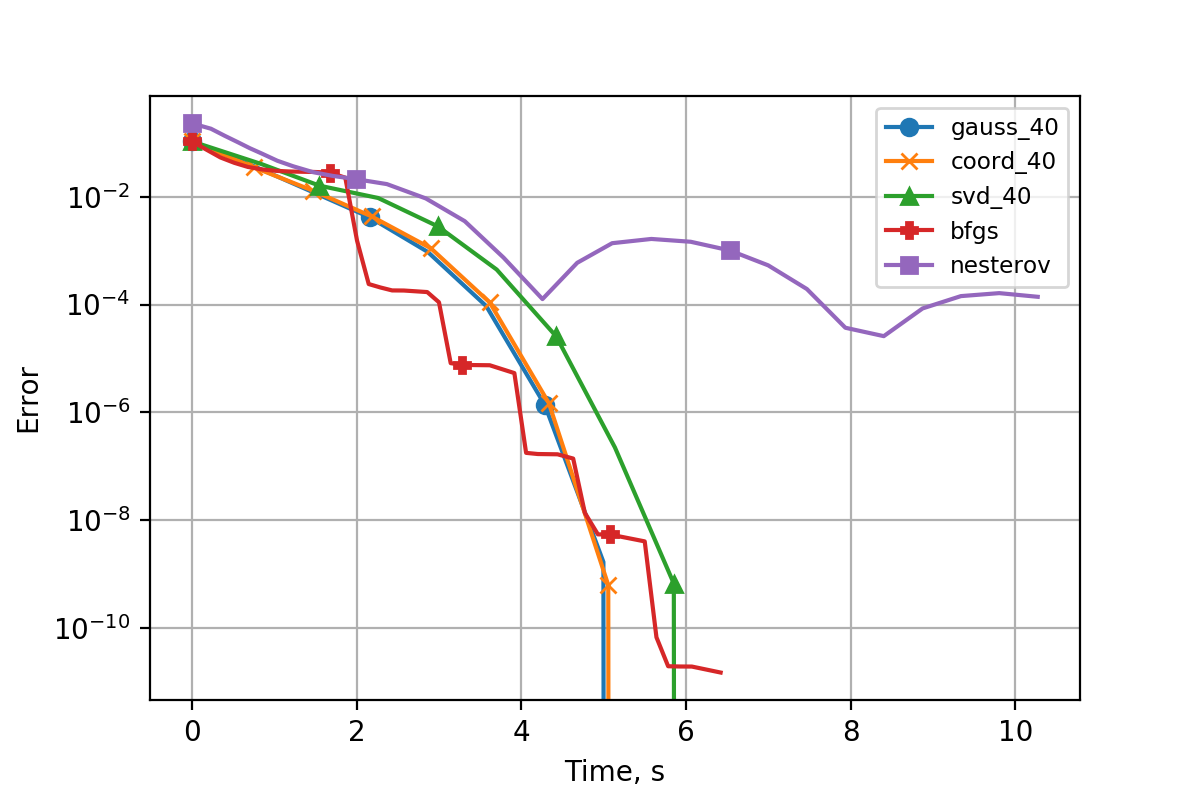}
		\caption{methods compared}
	\end{subfigure}
	\caption{covtype, $\lambda=10^{-3}$. (n, d) = (581012, 54)}
	\label{fig:covtype}
\end{figure}

\begin{figure}
	\centering
	\begin{subfigure}[t]{0.23\textwidth}
		\centering
		\includegraphics[width = \textwidth]{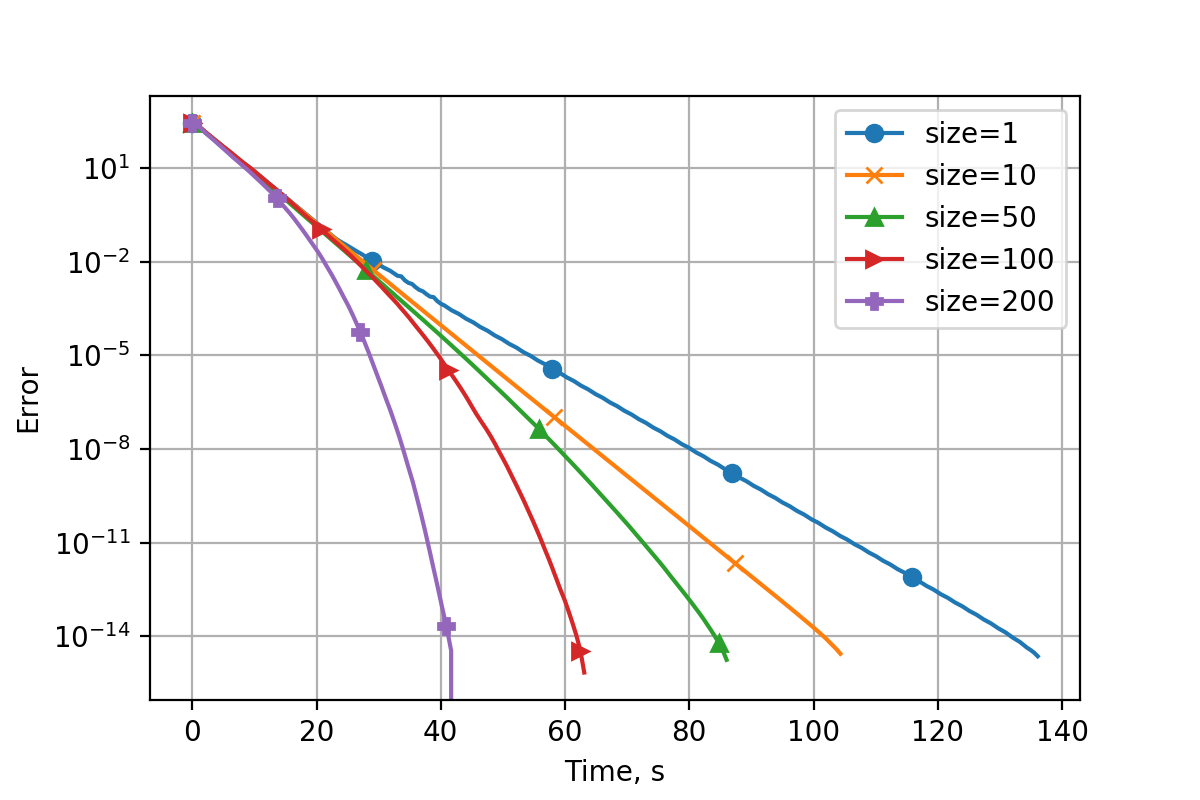}
		\caption{gauss}
	\end{subfigure}
	\begin{subfigure}[t]{0.23\textwidth}
		\centering
		\includegraphics[width = \textwidth]{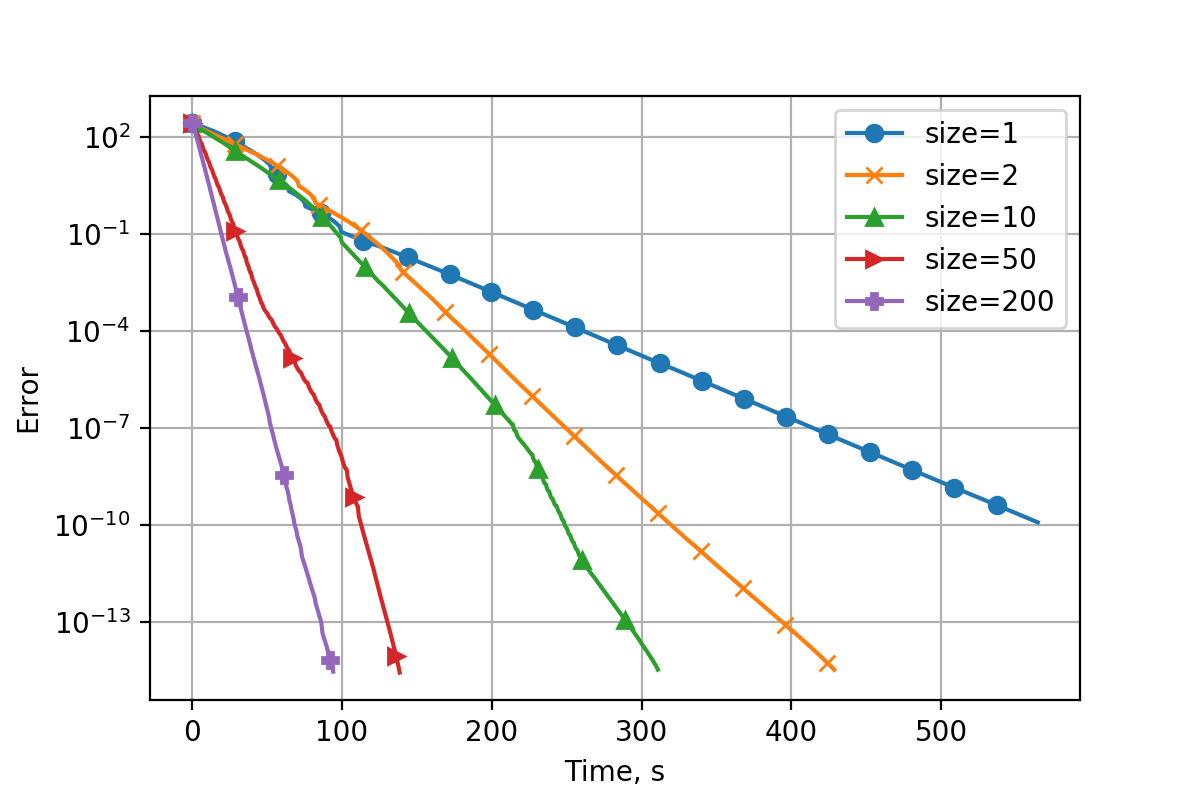}
		\caption{coord}
	\end{subfigure}
	\\
	\begin{subfigure}[t]{0.23\textwidth}
		\centering
		\includegraphics[width = \textwidth]{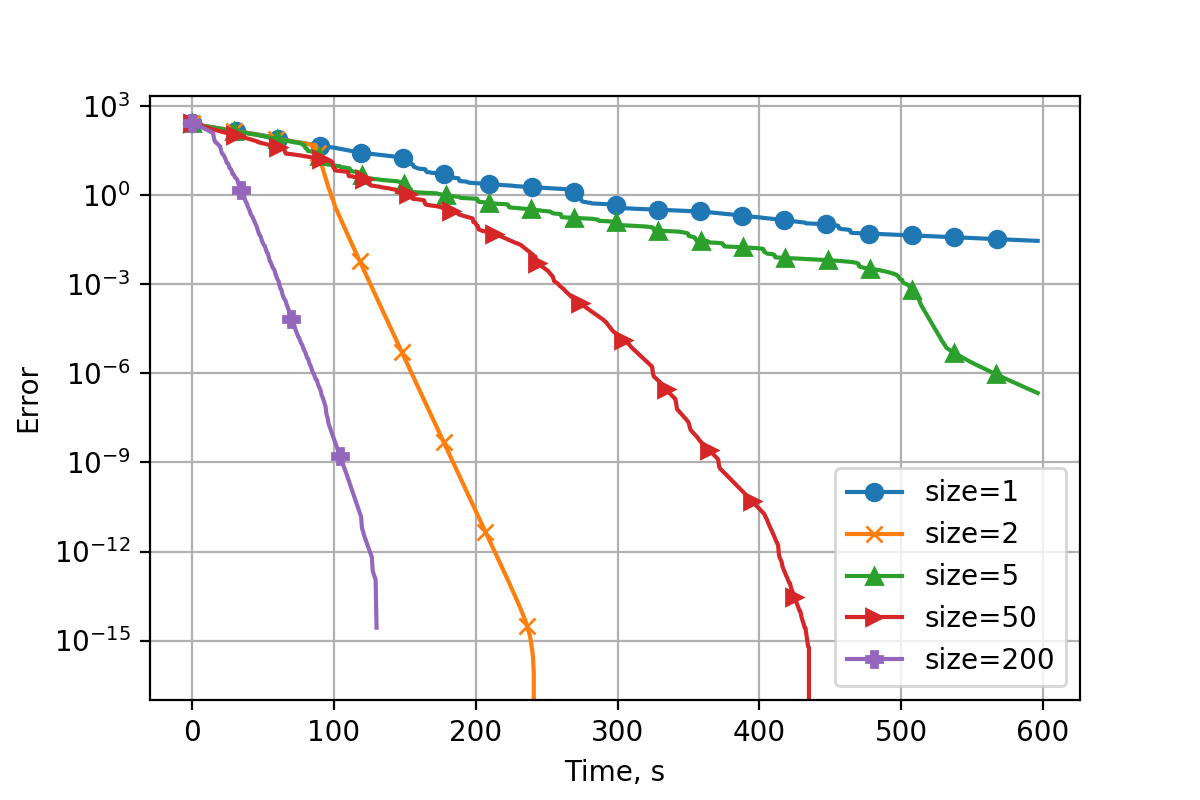}
		\caption{svd}
	\end{subfigure}
	\begin{subfigure}[t]{0.23\textwidth}
		\centering
		\includegraphics[width = \textwidth]{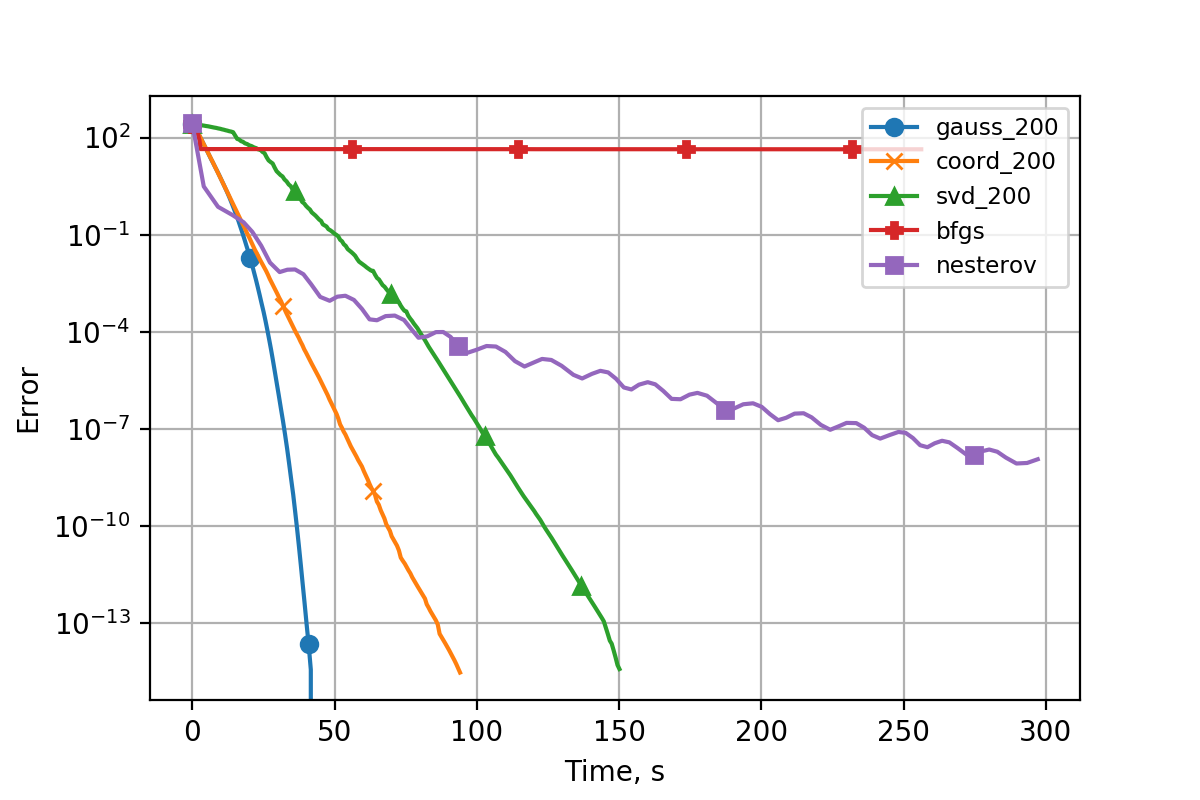}
		\caption{methods compared}
	\end{subfigure}
	\caption{gisette, $\lambda=10^{-1}$. (n, d) = (6000, 5000)}
	\label{fig:gisette}
\end{figure}

Contrary to the simulation results in Section~\ref{subsection:exper_synthetic}, we find that \textbf{svd} sketch is outperformed by others. Overall, \textbf{gauss} yielded the best results throughout all numerical tests. Randomized BFGS is comparable to the classical  BFGS method on datasets with small $d$ and moderate number of samples, see Figures~\ref{fig:w8a},~\ref{fig:a9a},~\ref{fig:covtype}, but it performs better than classical BFGS on datasets with a high number of samples; see Figures~\ref{fig:higgs},~\ref{fig:susy}. Moreover, Randomized BFGS significantly outperforms the classical BFGS method on larger dimensional problems such as {\tt gisette} ($d = 5,000$), {\tt colon-cancer} ($d = 2,000$) and {\tt epsilon} ($d = 2,000$); see Figures~\ref{fig:gisette},~\ref{fig:colon_cancer} and~\ref{fig:epsilon}.

\begin{figure}
	\centering
	\begin{subfigure}[h]{0.23\textwidth}
		\centering
		\includegraphics[width = \textwidth]{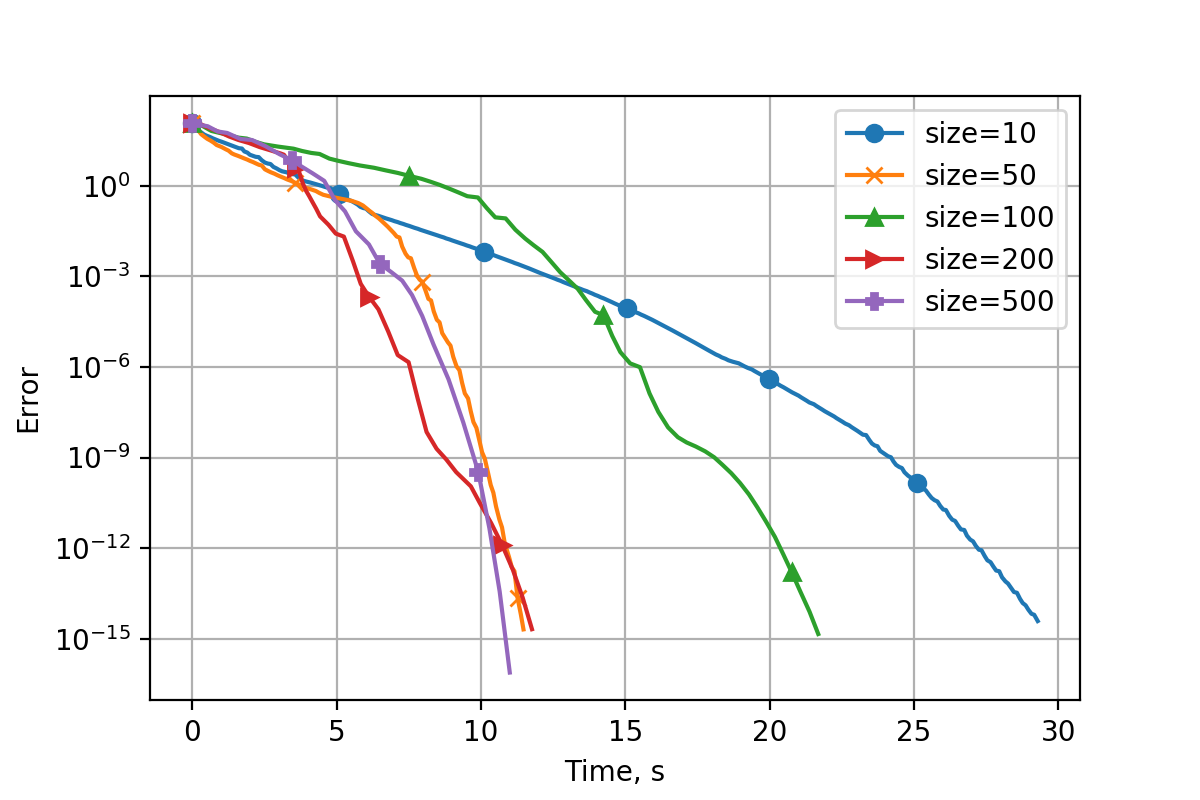}
		\caption{gauss}
	\end{subfigure}
	\begin{subfigure}[h]{0.23\textwidth}
		\centering
		\includegraphics[width = \textwidth]{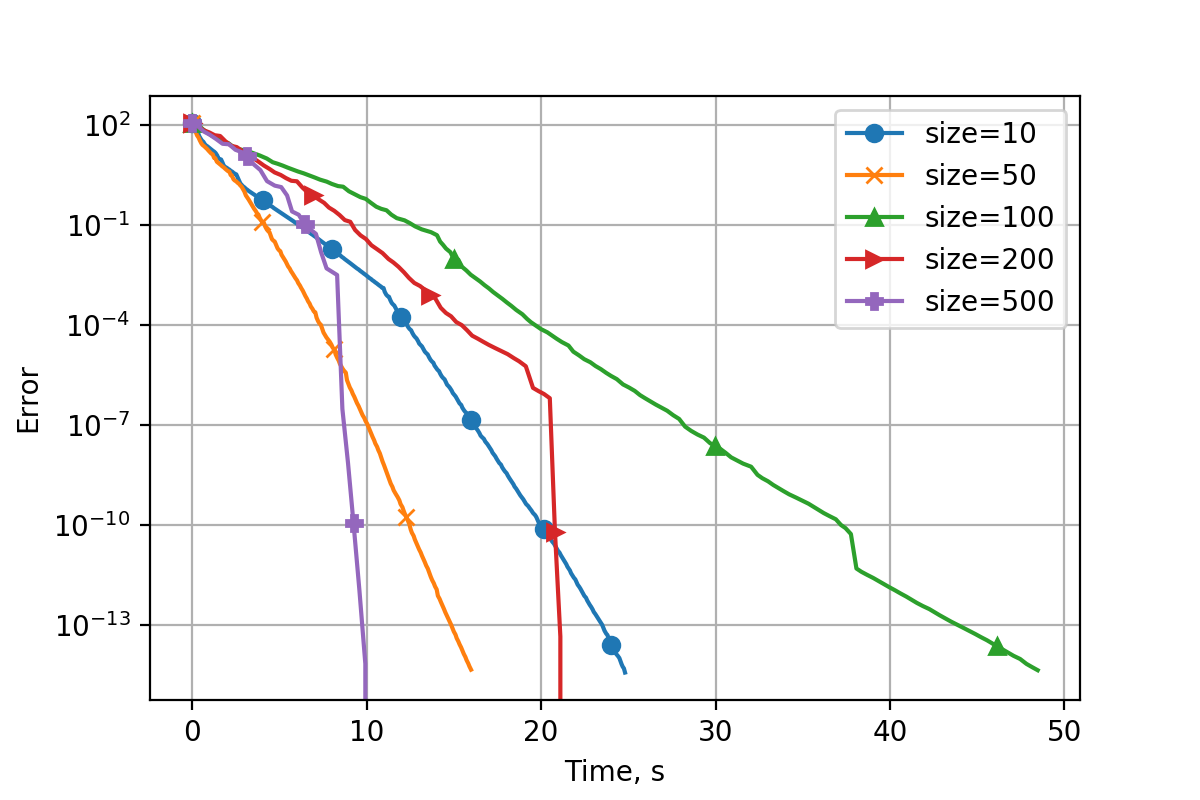}
		\caption{coord}
	\end{subfigure}
	\\
	\begin{subfigure}[h]{0.23\textwidth}
		\centering
		\includegraphics[width = \textwidth]{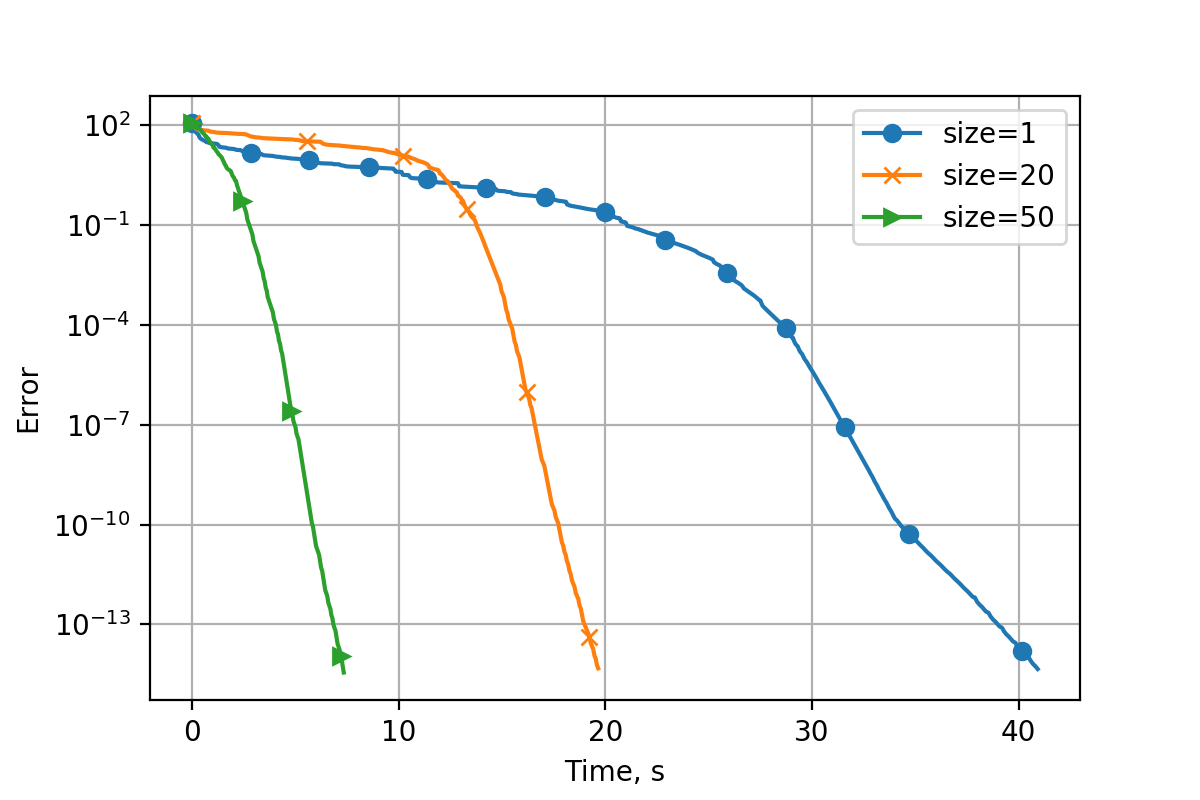}
		\caption{svd}
	\end{subfigure}
	\begin{subfigure}[h]{0.23\textwidth}
		\centering
		\includegraphics[width = \textwidth]{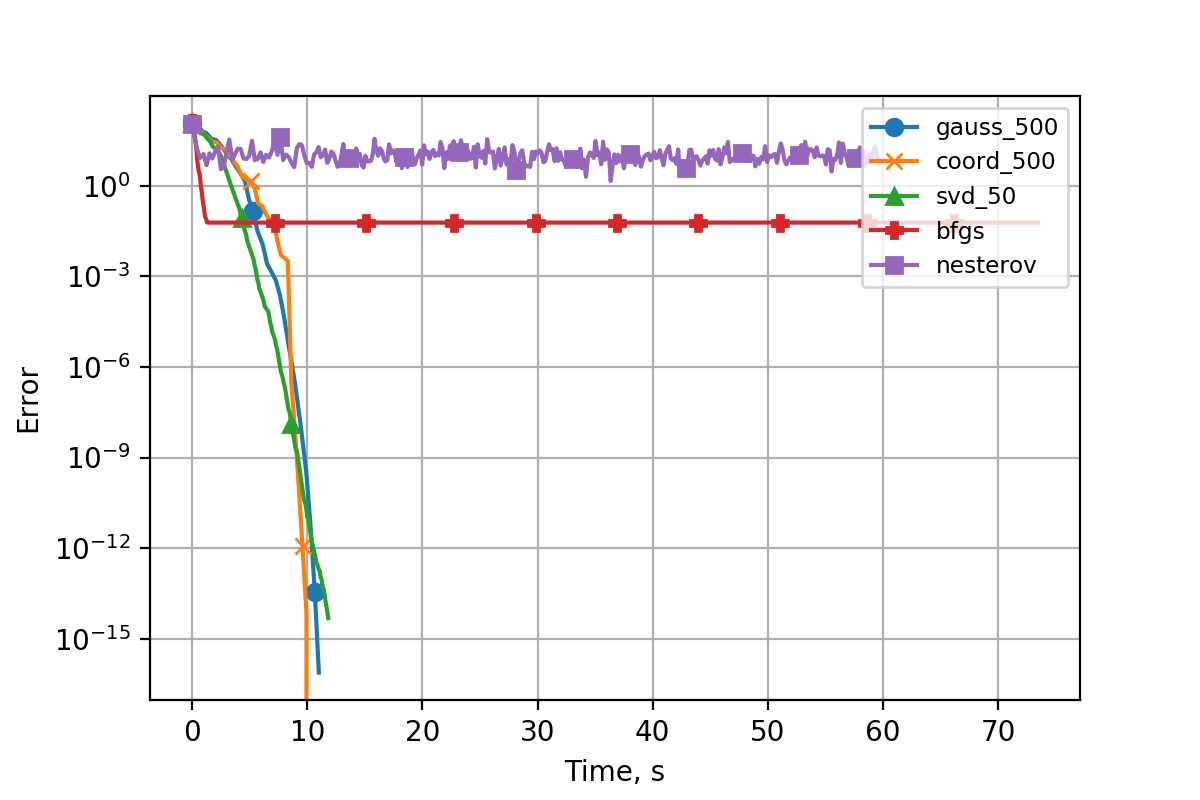}
		\caption{methods compared}
	\end{subfigure}
	\caption{colon-cancer, $\lambda=10^{-1}$. (n, d) = (62, 2000)}
	\label{fig:colon_cancer}
\end{figure}

\begin{figure}
	\centering
	\includegraphics[width = 0.4\textwidth]{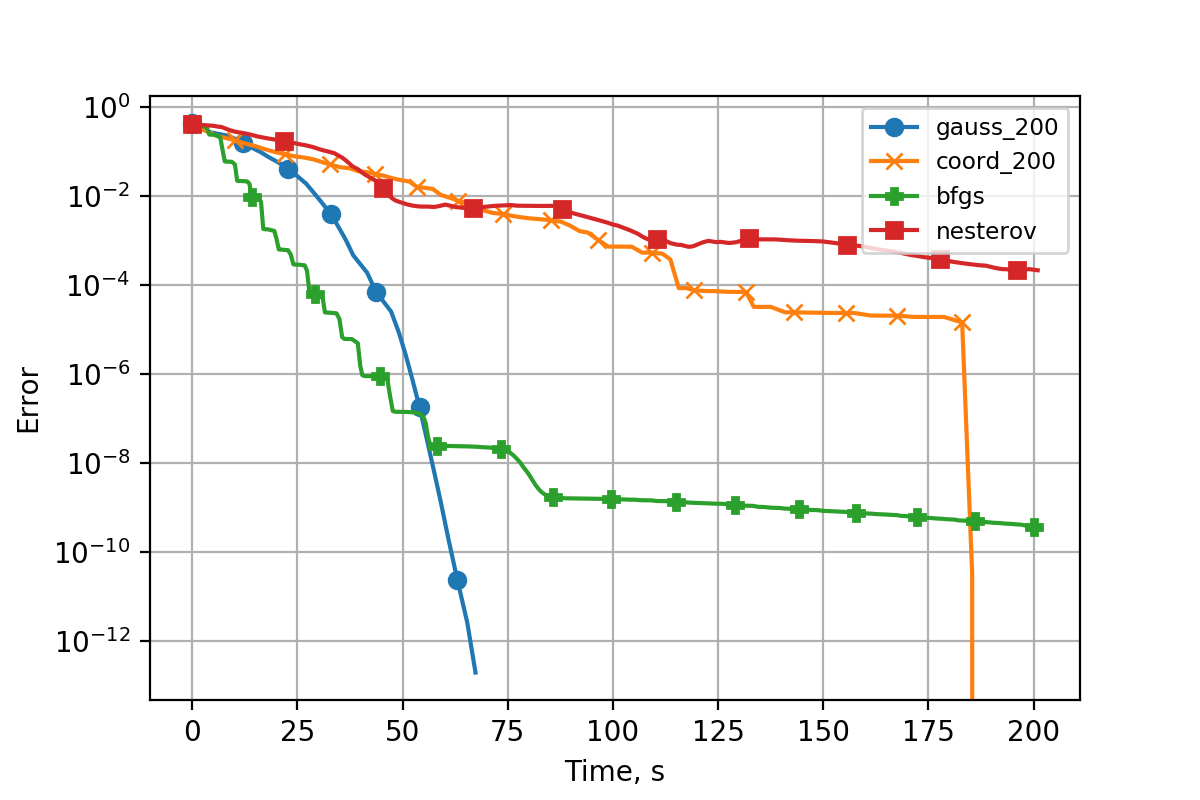}
	\caption{epsilon, $\lambda=10^{-4}$. (n, d) = (400000, 2000)}
	\label{fig:epsilon}
\end{figure}

%\begin{figure}
%	\centering
%	\begin{subfigure}[t]{0.23\textwidth}
%		\centering
%		\includegraphics[width = \textwidth]{gisette/regcoef=1.00e-01/comparison.png}
%		\caption{gisette, $\lambda=10^{-1}$.\\ (n, d) = (6000, 5000)}
%	\end{subfigure}
%	\begin{subfigure}[t]{0.23\textwidth}
%		\centering
%		\includegraphics[width = \textwidth]{colon_cancer/regcoef=1.00e-01/comparison.png}
%		\caption{colon-cancer, $\lambda=10^{-1}$.\\ (n, d) = (62, 2000)}
%	\end{subfigure}
%	\caption{High dimensional datasets}
%	\label{fig:highdim}
%\end{figure}

\section{Conclusion, Consequences and Future Work}

With a meaningful rate for the local convergence of  randomized BFGS, we now point to several new open avenues for further study.

\paragraph{Recovering size of local convergence neighborhood of Newton's method.} In Section~\ref{sec:b98fg9809f} we show that when $\bS=\bI$, we recover Newton’s method and its rate of convergence. We also show that the size of the region of convergence is proportional to $1/d$. On the other hand, in the classic analysis of Newton’s method, the size of this region is $1/4$. This is a marked difference, and we believe it is because we not only establish the convergence of the iterates, but also the convergence of the inverse Hessians!
%, which is not of  the standard Newton analysis.
 It seems that showing that the Hessians also converge at a favorable linear rate has come at the cost of a smaller local region of convergence. We believe it makes sense to investigate whether this apparent deficiency is improvable or not.

\paragraph{Beyond access to sketched Hessian.} While RBFGS does not require access to the full Hessian, it requires access to a Hessian sketch. Note that the required Hessian sketch can correspond to as little as a single row of the Hessian only, and that this can be computed at the same cost as gradient evaluation (e.g., by doing backprop twice) as we pointed out in the paper. On the other extreme, when $\bS= \bI$, RBFGS  recovers the full Newton’s method. As such, RBFGS can be thought of as a randomized second-order method, or a method lying somewhere on the spectrum between a first-order method and second-order method. It should be interesting to investigate whether similar theory to ours can be established for some variants of RBFGS which do not require explicit sketch of the Hessian, but instead use difference of gradients as an approximation thereof. 

\paragraph{De-randomizing.} One immediate question is whether randomization is necessary to establish a good convergence rate for the BFGS method. That is, can we show that a (suitably constructed) deterministic variant of BFGS\footnote{Note that RBFGS becomes deterministic when $\bS =\bI$ with probability 1. However, this is not what we have in mind here. We are talking about deterministic variants RBFGS in cases when the sketching matrix $\mS$ has a small number of columns.} also enjoys a fast convergence rate? Based on our techniques, we believe it may be possible to achieve this through the following. First, we would need to establish a rate at which the estimates of the inverse matrix converge to a fixed constant Hessian matrix. In other words, we need to understand how fast would the BFGS matrices $\mB_k$ converge to the inverse Hessian were we to apply BFGS to minimizing a quadratic. We believe this step could perhaps be answered by  using the  equivalence of the BFGS method and the conjugate gradients method (CG)~\citep{Nazareth1979}. Since the CG method has been shown to converge linearly at an accelerated rate (as compared to gradient descent), we suspect that the Hessian estimates of the BFGS method might also converge at an accelerated rate. If this is confirmed, the remaining steps of the proof would follow verbatim from our proofs.

\paragraph{Globalizing.} By using a globalization strategy, such as the trust region framework~\citep{trustregionbook}, line-search or a carefully designed continuation scheme~\citep{Renegar:2001}, we believe it may be possible to extend our results to obtain a global convergence theory.

\paragraph{Practical consequences.} The general rate of convergence and the bound in~\eqref{eq:rhobndGLM} suggest that new sketching matrices could be devised  for accelerating our randomized BFGS method. We believe that by exploring the use of modern fast randomized sketches, such as the ROS sketch of~\citet{Pilanci2015a}, it may be possible to further accelerate the convergence of RBFGS.

\paragraph{Beyond self-concordance.}  It may be possible to extend  our results beyond the class of self-concordant functions to the class of generalized self-concordant functions~\citep{genscnewton,Bach2010} which also allow for one to control how fast the Hessian (and the inverse Hessian) changes.

%\clearpage
%\section*{Broader Impact}
%
%Our work is of a theoretical character; we do not anticipate an immediate broader impact beyond impact on our foundations of understanding of stochastic second order and quasi-Newton type methods. We believe that stochastic second-order methods will eventually replace the currently used stochastic first-order methods (such as SGD and its many variants) in the training of supervised machine learning models. Our work takes a  step towards this goal.

%\newpage
\bibliography{stochQN-NeurIPS2020}
\bibliographystyle{icml2021}
\newpage

\appendix

\clearpage
\appendix

\onecolumn

\part*{Supplementary Material of Fast Linear Convergence of Randomized BFGS}

\section{Extra Experiments:  Figures~\ref{fig:susy} and~\ref{fig:higgs}}

\begin{figure}[h]
\centering
	\begin{subfigure}[t]{0.24\textwidth}
		\centering
		\includegraphics[width = \textwidth]{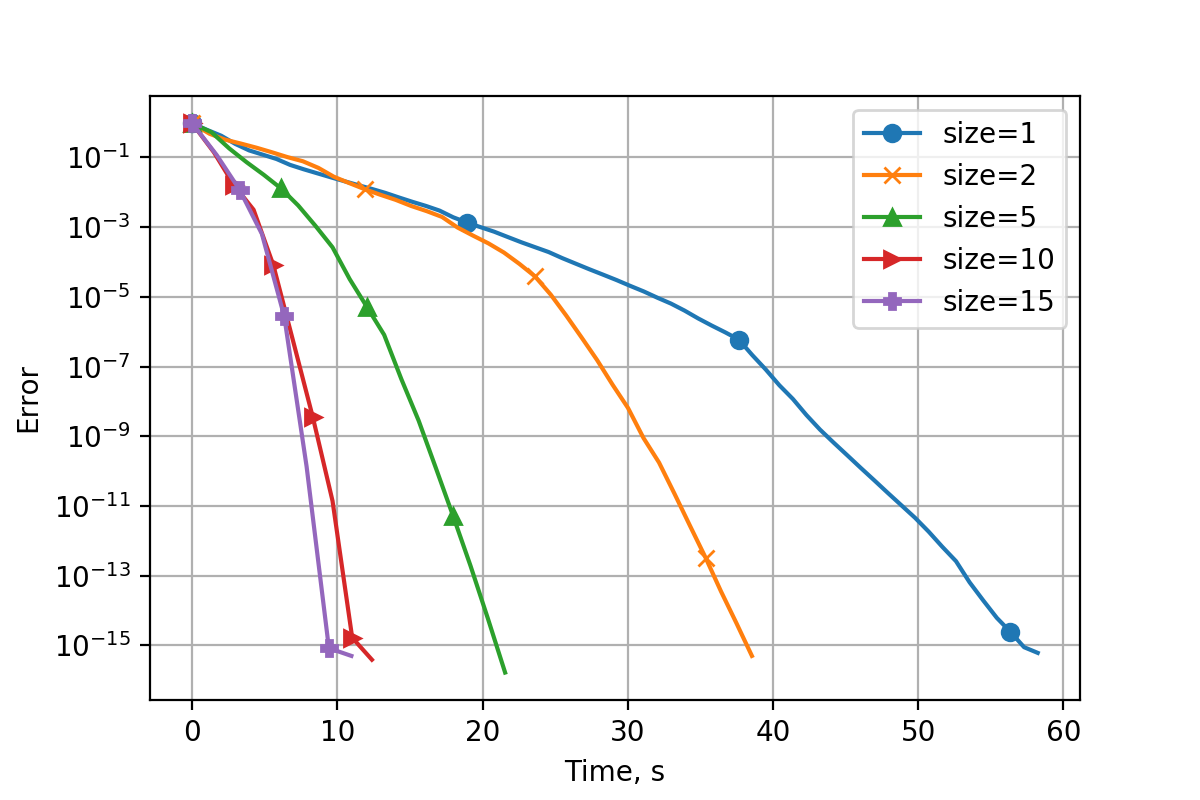}
		\caption{gauss}
	\end{subfigure}
	\begin{subfigure}[t]{0.24\textwidth}
		\centering
		\includegraphics[width = \textwidth]{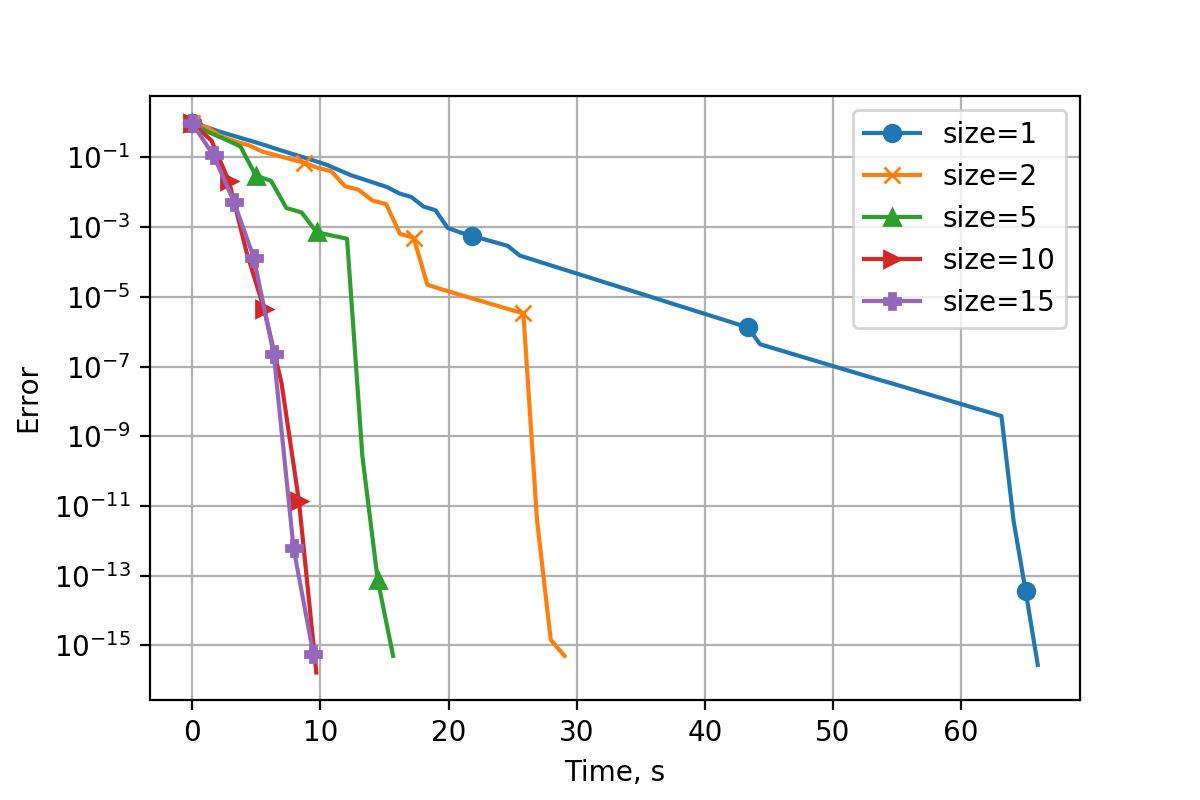}
		\caption{coord}
	\end{subfigure}
	\begin{subfigure}[t]{0.24\textwidth}
		\centering
		\includegraphics[width = \textwidth]{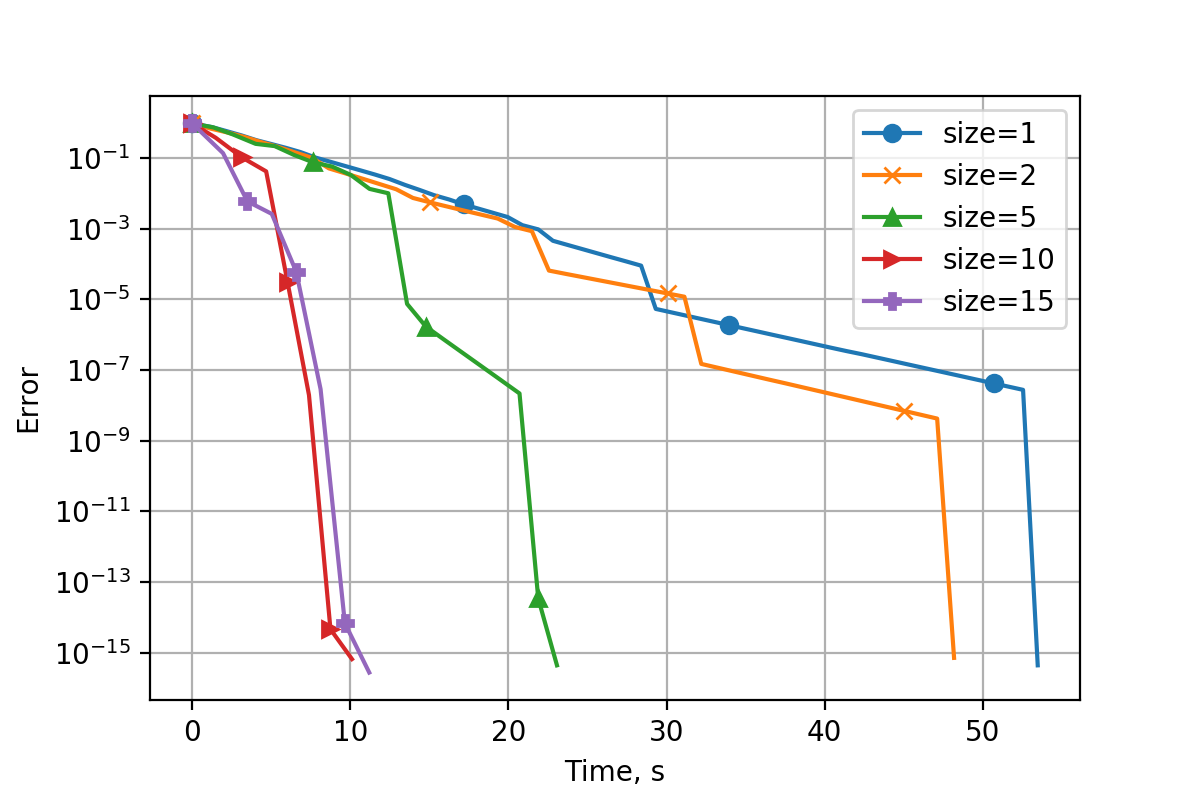}
		\caption{svd}
	\end{subfigure}
	\begin{subfigure}[t]{0.24\textwidth}
		\centering
		\includegraphics[width = \textwidth]{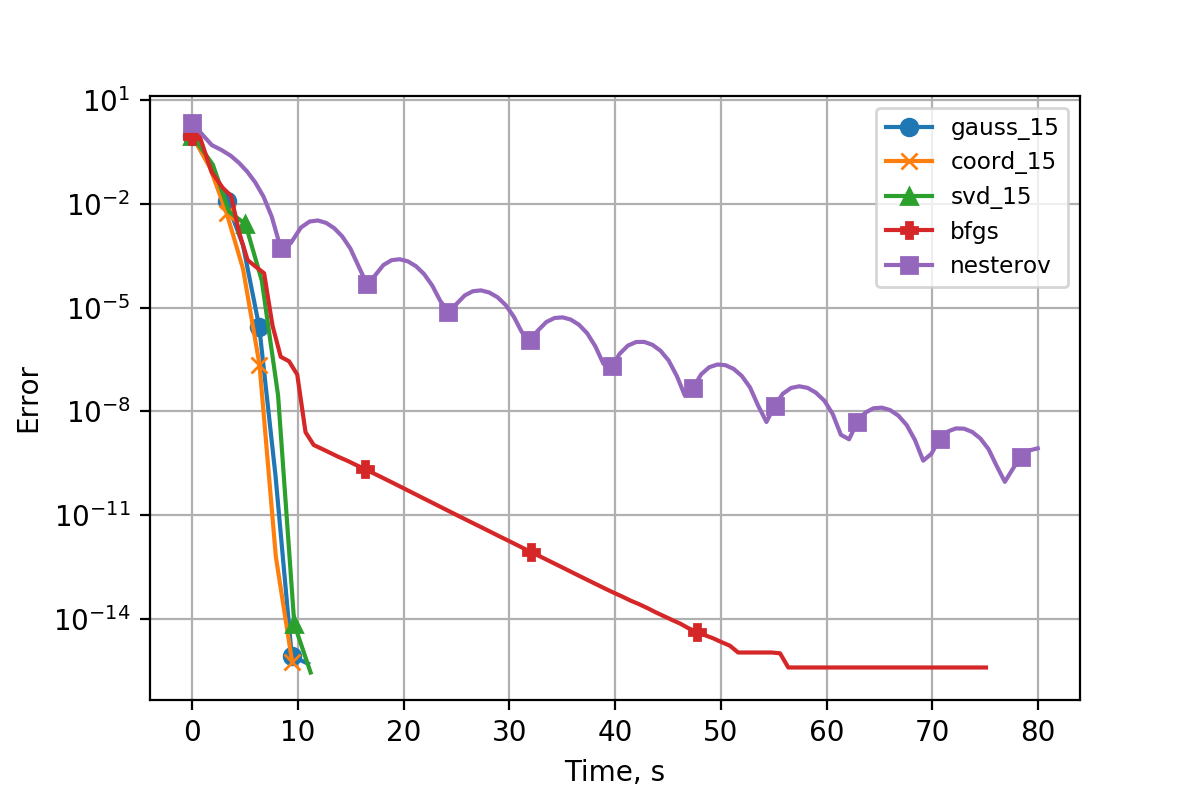}
		\caption{methods compared}
	\end{subfigure}
	\caption{{\tt SUSY}; $\lambda = 10^{-1}$; $n = 5,000,000$; $d = 18$; $\kappa = 6.1\cdot 10^3$}
	\label{fig:susy}
\end{figure}

\begin{figure}[!h]
		\begin{subfigure}[t]{0.24\textwidth}
			\centering
			\includegraphics[width = \textwidth]{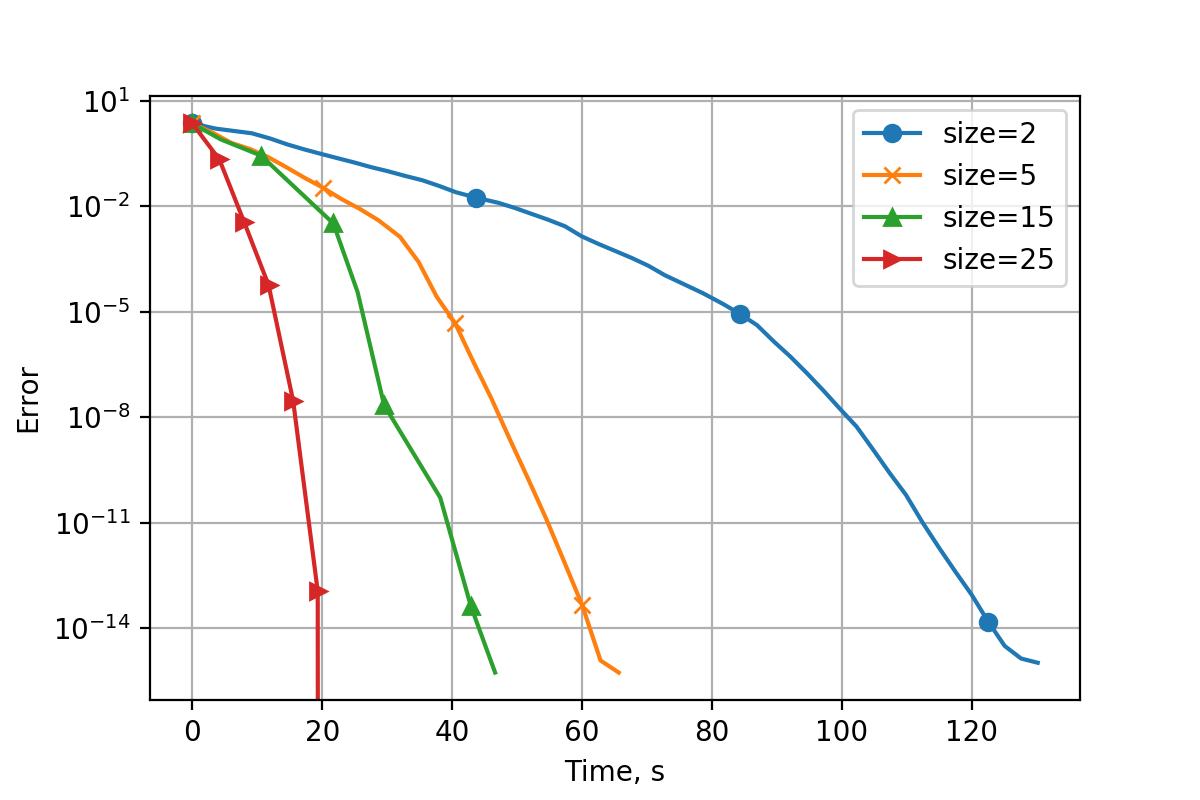}
			\caption{gauss}
		\end{subfigure}
		\begin{subfigure}[t]{0.24\textwidth}
			\centering
			\includegraphics[width = \textwidth]{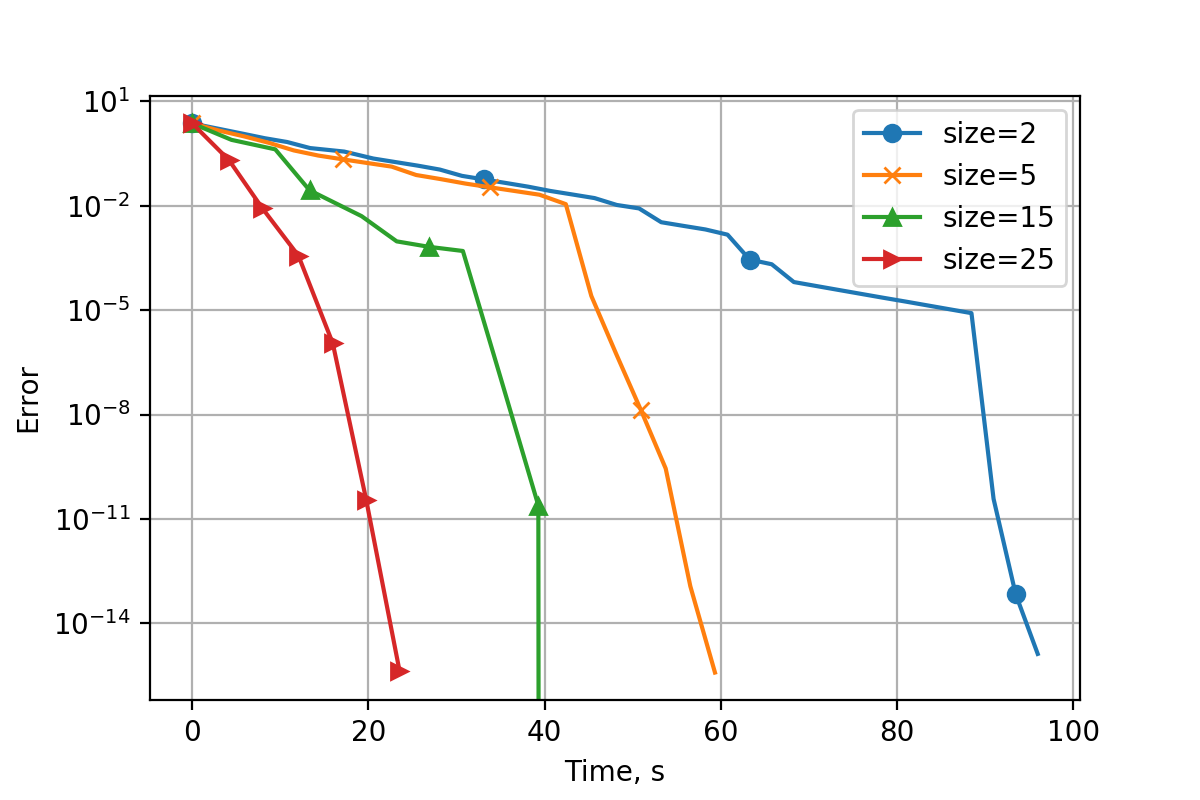}
			\caption{coord}
		\end{subfigure}
		\begin{subfigure}[t]{0.24\textwidth}
			\centering
			\includegraphics[width = \textwidth]{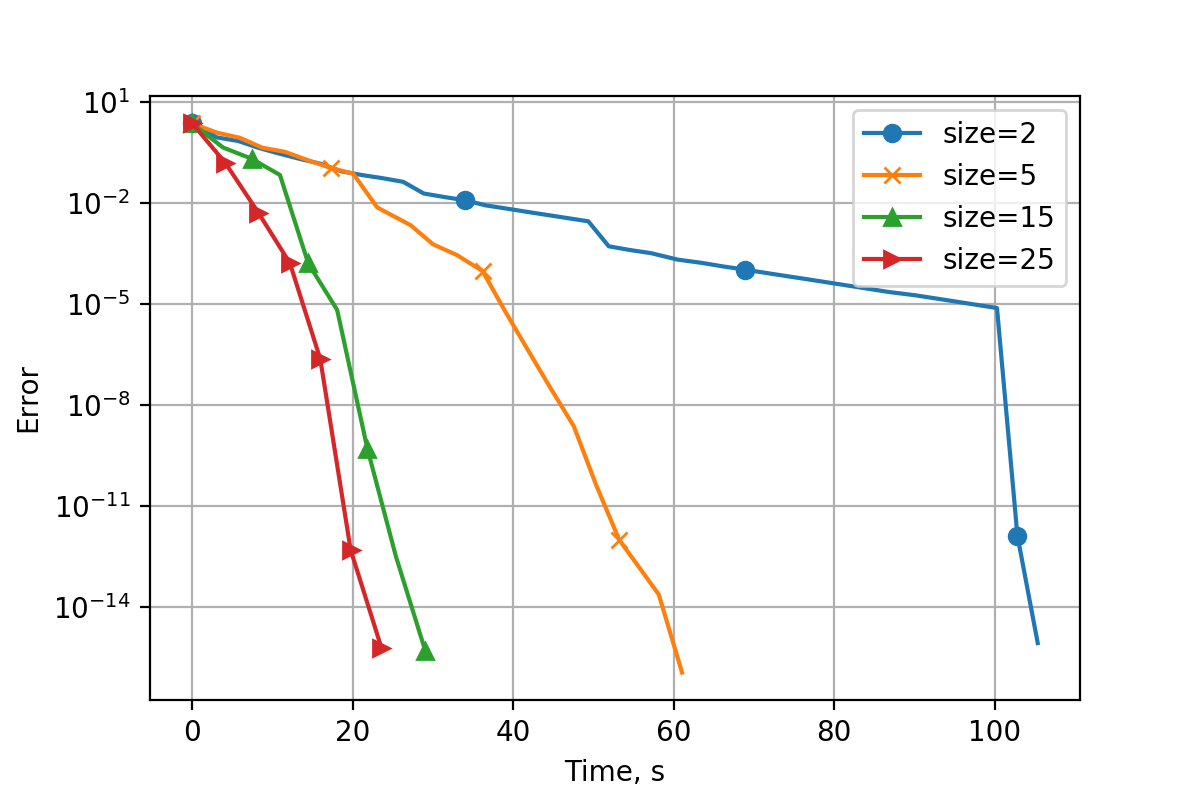}
			\caption{svd}
		\end{subfigure}
		\begin{subfigure}[t]{0.24\textwidth}
			\centering
			\includegraphics[width = \textwidth]{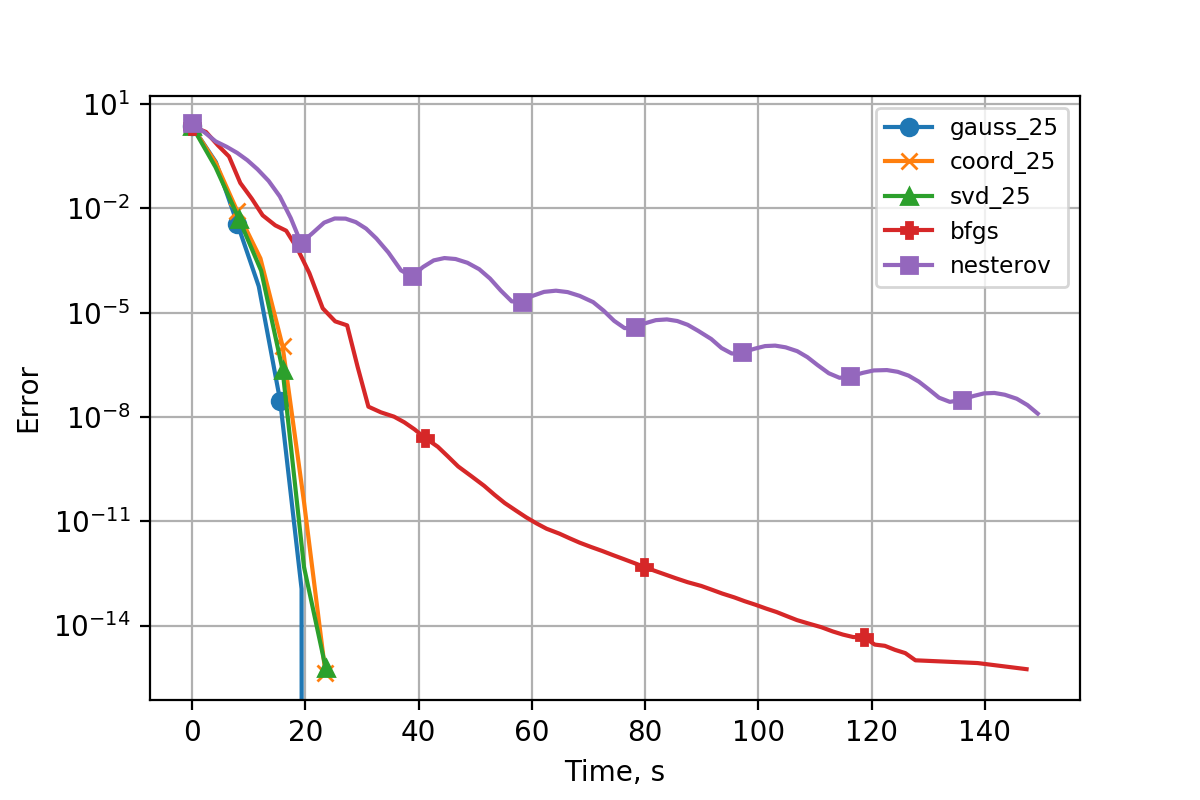}
			\caption{methods compared}
		\end{subfigure}
		\caption{HIGGS, $\lambda = 10^{-1}$. (n, d) = (11000000, 28)}
		\label{fig:higgs}
\end{figure}

\section{Proof of Theorem~\ref{theo:sc}}
\label{sec:prooftheosc}
First we collect some more notation.

\paragraph{Further notation.} For matrices $\mW \in \R^{d\times d}$, let $$ \norm{\mW}_x \eqdef \sup_{v \neq 0 } \frac{\norm{\mW v}_x}{\norm{v}_x}$$ denote the induced norm. Note that $\norm{\mW}_x = \norm{\mH_x^{1/2}\mW\mH_x^{-1/2}}_2$ and thus $\norm{\cdot}_x$ is sub-multiplicative. Indeed, 
\[ \norm{\mW \mV}_x = \norm{\mH_x^{1/2}\mW \mV\mH_x^{-1/2}}_2 =
\norm{\mH_x^{1/2}\mW \mH_x^{-1/2}\mH_x^{1/2}\mV\mH_x^{-1/2}}_2 \leq \norm{\mW }_x\norm{ \mV}_x.\] 
To further abbreviate our notation we will write  $\norm{\mW}_* \eqdef \norm{\mW}_{x_*} $. 

\subsection{Properties of self-concordant functions}

We now collect some known consequences of self-concordance and develop some additional properties that we need for our main proof.

\begin{lemma}\label{lem:hesscontrol} For all $y \in \cB_x^1$ we have that
\begin{equation} \label{eq:localhess}
 \norm{\mH_{x}^{-1} \mH_{y}}_x,  \;\norm{\mH_{y}^{-1} \mH_{x}}_x \quad \leq \quad
\frac{1}{\left(1 - \norm{y-x}_{x}\right)^2}, 
\end{equation}
and
\begin{equation} \label{eq:localhessdiff}
 \norm{\mI-\mH_{x}^{-1} \mH_{y}}_x, \;  \norm{\mI-\mH_{y}^{-1} \mH_{x}}_x \quad \leq \quad
\frac{1}{\left(1 - \norm{y-x}_{x}\right)^2}-1, 
\end{equation}
\end{lemma}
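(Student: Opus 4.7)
The plan is to convert everything from the weighted operator norm $\norm{\cdot}_x$ into the ordinary operator norm of a similarity-transformed matrix, then read off the bounds directly from the Löwner inequalities that are equivalent to the self-concordance condition~\eqref{eq:selfcon}.

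First I would observe that $\norm{\mW}_x = \norm{\mH_x^{1/2}\mW\mH_x^{-1/2}}_2$ (this is already noted in the ``Further notation'' paragraph). Applying this with $\mW = \mH_x^{-1}\mH_y$ and with $\mW = \mH_y^{-1}\mH_x$ collapses the similarity transformation into
\[
\norm{\mH_x^{-1}\mH_y}_x = \norm{\mM}_2, \qquad \norm{\mH_y^{-1}\mH_x}_x = \norm{\mM^{-1}}_2, \qquad \text{where } \mM \eqdef \mH_x^{-1/2}\mH_y\mH_x^{-1/2}.
\]
Next I would translate~\eqref{eq:selfcon} into Löwner form. Setting $w=\mH_x^{1/2}v$ turns $\norm{v}_y^2/\norm{v}_x^2$ into the Rayleigh quotient $w^\top \mM w / w^\top w$, so squaring~\eqref{eq:selfcon} and letting $r \eqdef \norm{y-x}_x < 1$ gives
\[
(1-r)^2 \mI \;\preceq\; \mM \;\preceq\; \frac{1}{(1-r)^2}\mI.
\]
Inverting this Löwner sandwich yields the same type of bound for $\mM^{-1}$, so the two inequalities in~\eqref{eq:localhess} follow immediately from the spectral radius.

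For~\eqref{eq:localhessdiff}, the same identities give $\norm{\mI-\mH_x^{-1}\mH_y}_x = \norm{\mI-\mM}_2$ and $\norm{\mI-\mH_y^{-1}\mH_x}_x = \norm{\mI-\mM^{-1}}_2$. The eigenvalues of both $\mM$ and $\mM^{-1}$ lie in the interval $\bigl[(1-r)^2,\; (1-r)^{-2}\bigr]$, so the eigenvalues of $\mI-\mM$ (respectively $\mI-\mM^{-1}$) lie in $\bigl[1-(1-r)^{-2},\; 1-(1-r)^2\bigr]$. The one mildly non-trivial step is to verify that the upper side dominates: we need
\[
\frac{1}{(1-r)^2} - 1 \;\geq\; 1-(1-r)^2,
\]
which after rearranging is $\frac{1}{(1-r)^2} + (1-r)^2 \geq 2$, i.e.\ the AM--GM inequality $a + a^{-1} \geq 2$ for $a = (1-r)^2 > 0$. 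This gives $\norm{\mI-\mM}_2 = (1-r)^{-2}-1$, and the same bound for $\norm{\mI-\mM^{-1}}_2$.

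There is no real obstacle here; the only point that requires a moment of care is the AM--GM comparison, which ensures that the ``expansion'' side of the self-concordance sandwich dominates the ``contraction'' side when measuring the deviation from the identity. Everything else is bookkeeping with the identification $\norm{\cdot}_x = \norm{\mH_x^{1/2}\,\cdot\,\mH_x^{-1/2}}_2$.
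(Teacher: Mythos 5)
Your proof is correct, and it is genuinely self-contained where the paper is not: the paper does not prove this lemma at all, it simply cites Theorem 2.2.1 of \citet{Renegar:2001}. Your argument reconstructs that result directly from the paper's own Definition~1.1: the identification $\norm{\mW}_x = \norm{\mH_x^{1/2}\mW\mH_x^{-1/2}}_2$ correctly collapses all four quantities to $\norm{\mM}_2$, $\norm{\mM^{-1}}_2$, $\norm{\mI-\mM}_2$, $\norm{\mI-\mM^{-1}}_2$ with $\mM = \mH_x^{-1/2}\mH_y\mH_x^{-1/2}$; the substitution $w = \mH_x^{1/2}v$ correctly turns the squared sandwich \eqref{eq:selfcon} into the L\"owner bounds $(1-r)^2\mI \preceq \mM \preceq (1-r)^{-2}\mI$ (valid since $r = \norm{y-x}_x < 1$ on $\cB_x^1$); and the L\"owner sandwich is inherited by $\mM^{-1}$, giving \eqref{eq:localhess}. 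For \eqref{eq:localhessdiff} you correctly identify the one point needing care: since $\mM$ is symmetric, $\norm{\mI-\mM}_2$ is the largest absolute eigenvalue of $\mI-\mM$, and your AM--GM comparison $(1-r)^{-2} + (1-r)^2 \geq 2$ shows the upper endpoint $(1-r)^{-2}-1$ dominates $1-(1-r)^2$ — it would be good to state explicitly that you use symmetry of $\mM$ here, and note that your final ``$\norm{\mI-\mM}_2 = (1-r)^{-2}-1$'' should read ``$\leq$'' (the eigenvalue interval gives only an upper bound, which is all the lemma claims). What your route buys is independence from the external reference, making the lemma verifiable from the definition stated in the paper; what the citation buys the authors is brevity, at the cost of asking the reader to check that Renegar's functional definition of self-concordance matches Definition~1.1.
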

\begin{proof}
See Theorem 2.2.1 of \citet{Renegar:2001}.
\end{proof}

\begin{lemma}
If $\norm{x-x_*}_x \leq 1$ then 
  \begin{equation} \label{eq:NewtonstepHstarz}
 \norm{x - x_* - \mH_*^{-1}\nabla f(x)}_{*} \; \leq \; \frac{\norm{x-x_*}_*^2}{1-\norm{x-x_*}_*},
 \end{equation}
 and
 \begin{equation} \label{eq:Hinvegradbnd}
\norm{\mH_*^{-1}\nabla f(x_k)}_*  \; \leq \; \frac{\norm{x_*-x_k}_*}{1-\norm{x_k-x_*}_{*}}.
\end{equation}

\end{lemma}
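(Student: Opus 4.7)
The plan is to derive both bounds from a single integral representation. Since $\nabla f(x_*) = 0$, the fundamental theorem of calculus applied along the segment $y_t \eqdef x_* + t(x-x_*)$, $t \in [0,1]$, gives
\[
\nabla f(x) \;=\; \int_0^1 \mH_{y_t}(x-x_*)\,dt.
\]
Multiplying through by $\mH_*^{-1}$ and rearranging yields the key identity
\[
x - x_* - \mH_*^{-1}\nabla f(x) \;=\; \int_0^1 \left(\mI - \mH_*^{-1}\mH_{y_t}\right)(x-x_*)\,dt,
\]
which reduces the first claim to controlling the matrix $\mI - \mH_*^{-1}\mH_{y_t}$ in the induced local norm $\norm{\cdot}_*$.

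Next, I would take $\norm{\cdot}_*$ inside the integral, use submultiplicativity of the induced norm to peel off $\norm{x-x_*}_*$, and invoke~\eqref{eq:localhessdiff} from Lemma~\ref{lem:hesscontrol} at the point $x_*$. Writing $r \eqdef \norm{x-x_*}_*$, we have $\norm{y_t - x_*}_* = tr \le t < 1$, so Lemma~\ref{lem:hesscontrol} is applicable and gives
\[
\norm{\mI - \mH_*^{-1}\mH_{y_t}}_* \;\leq\; \frac{1}{(1-tr)^2} - 1.
\]
The elementary antiderivative $\int_0^1 (1-tr)^{-2}\,dt = \tfrac{1}{1-r}$ then yields
\[
\norm{x - x_* - \mH_*^{-1}\nabla f(x)}_* \;\leq\; r \int_0^1\!\left(\frac{1}{(1-tr)^2}-1\right)dt \;=\; r\left(\frac{1}{1-r}-1\right) \;=\; \frac{r^2}{1-r},
\]
which is exactly~\eqref{eq:NewtonstepHstarz}.

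For the second bound~\eqref{eq:Hinvegradbnd}, the triangle inequality
\[
\norm{\mH_*^{-1}\nabla f(x)}_* \;\leq\; \norm{x - x_* - \mH_*^{-1}\nabla f(x)}_* + \norm{x-x_*}_*
\]
combined with the first bound gives, after collecting over the common denominator $1-r$, the ratio $\tfrac{r^2 + r(1-r)}{1-r} = \tfrac{r}{1-r}$, which is~\eqref{eq:Hinvegradbnd}.

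The main obstacle I anticipate is reconciling the stated hypothesis $\norm{x-x_*}_x \leq 1$ (local norm at $x$) with the fact that all of the above applies self-concordance in the local norm at $x_*$, i.e., requires $\norm{x-x_*}_* < 1$. The bounds blow up at $r=1$, so the hypothesis must be interpreted strictly. The cleanest fix is to pass between the two local norms using~\eqref{eq:selfcon}: the inequality $\norm{v}_{x_*} \leq \norm{v}_x/(1-\norm{x-x_*}_{x_*})$ shows that the two conditions are comparable in a neighborhood of $x_*$, and the bound~\eqref{eq:selfcon} in the opposite direction lets us convert a hypothesis in $\norm{\cdot}_x$ into one in $\norm{\cdot}_*$ (perhaps with a harmless constant). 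Once $r < 1$ is secured, the argument above goes through verbatim.
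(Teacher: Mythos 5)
Your proof of \eqref{eq:NewtonstepHstarz} coincides with the paper's: the same identity $x - x_* - \mH_*^{-1}\nabla f(x) = \int_0^1 \left(\mI - \mH_*^{-1}\mH_{x_*+t(x-x_*)}\right)(x-x_*)\,dt$, the same application of \eqref{eq:localhessdiff} along the segment, and the same integral $\int_0^1\left((1-tr)^{-2}-1\right)dt = r/(1-r)$ with $r = \norm{x-x_*}_*$. For \eqref{eq:Hinvegradbnd} you take a genuinely (if mildly) different route: the paper bounds $\norm{\mH_*^{-1}\nabla f(x)}_*$ directly from the integral representation using the \emph{other} half of Lemma~\ref{lem:hesscontrol}, namely \eqref{eq:localhess}, via $\int_0^1(1-tr)^{-2}\,dt = (1-r)^{-1}$, whereas you deduce it from \eqref{eq:NewtonstepHstarz} by the triangle inequality, $\norm{\mH_*^{-1}\nabla f(x)}_* \le \frac{r^2}{1-r} + r = \frac{r}{1-r}$. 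The constants agree exactly, so nothing is lost; your version is slightly more economical in that it never needs \eqref{eq:localhess}, while the paper's direct computation keeps the two estimates independent of one another.

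On the hypothesis: you are right that the argument genuinely needs $r < 1$, and the paper's own proof uses exactly this --- the stated condition $\norm{x-x_*}_x \le 1$ is never invoked as written, and downstream (Lemma~\ref{lem:iteratescbnd}, the proof of Theorem~\ref{theo:sc}) the lemma is applied with $x \in \cB_{x_*}^{1/4}$, so $r<1$ holds comfortably. But your proposed repair should be stated more carefully: \eqref{eq:selfcon} with $v = x - x_*$ gives $\norm{x-x_*}_* \le \norm{x-x_*}_x \big/ \bigl(1-\norm{x-x_*}_x\bigr)$, which is below $1$ only when $\norm{x-x_*}_x < \tfrac{1}{2}$; if $\norm{x-x_*}_x$ is merely $\le 1$, then $r$ can exceed $1$ and both right-hand sides become vacuous or undefined. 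So there is no ``harmless constant'' converting the hypothesis as literally stated; the honest fix is to read it as $x \in \cB_{x_*}^1$ (i.e.\ $r<1$), which is how the lemma is actually used, after which your argument goes through verbatim.
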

\begin{proof}
Note that
\begin{eqnarray}
x - x_* - \mH_*^{-1}\nabla f(x) & = & x - x_* -\int_{0}^1\mH_*^{-1}\mH_{x_* +t(x-x_*)} (x-x_*)d t \nonumber \\
& = & \int_{0}^1\left(\mI -\mH_*^{-1}\mH_{x_* +t(x-x_*)}\right)(x-x_*)d t .\label{eq:nbiug7fudff}
\end{eqnarray}
Taking norms on both sides and using the sub-multiplicativity of the induced norm gives
\begin{eqnarray*}
\norm{x - x_* - \mH_*^{-1}\nabla f(x)}_{*} &\overset{\eqref{eq:nbiug7fudff}}{=}&\norm{\int_{0}^1\left(\mI -\mH_*^{-1}\mH_{x_* +t(x-x_*)}\right)(x-x_*)d t}_* \\
& \leq & \int_{0}^1\norm{\mI -\mH_*^{-1}\mH_{x_* +t(x-x_*)}}_* \norm{x-x_*}_*d t   \nonumber \\
& \overset{\eqref{eq:localhessdiff}}{\leq} &  \norm{x-x_*}_*\int_{0}^1\left(\frac{1}{\left(1 - t\norm{x-x_*}_{*}\right)^2}-1\right) d t  \nonumber \\
& = &  \frac{\norm{x-x_*}_*^2}{1 - \norm{x-x_*}_{*}}.
\end{eqnarray*}

The  proof of~\eqref{eq:Hinvegradbnd} follows from~\eqref{eq:localhess} since
\begin{equation*}
\norm{\mH_*^{-1}\nabla f(x)}_*    \leq \norm{x_*-x}_*  \int_{0}^1\norm{\mH_*^{-1}\mH_{x_* +t(x-x_*)}}_*  dt 
 \overset{\eqref{eq:localhess}}{\leq}  \int_{0}^1 \frac{\norm{x_*-x}_*}{(1 - t\norm{x-x_*}_{*})^2} dt  
 = \frac{\norm{x_*-x}_*}{1-\norm{x-x_*}_{*}}.
\end{equation*}
\end{proof}

Using self-concordance we can change the metric in the weighted Frobenius.
\begin{lemma}
	Let $\mW \in \sym^d$.
	For all $y \in \cB_x^1$ the following inequality holds:
	\begin{equation}\label{eq:normchangescx}
		\normfro{\mW}{\mH_{y}} \leq  \frac{\normfro{\mW}{\mH_{x}}}{\left(1 - \norm{y-x}_{x}\right)^2} .
	\end{equation}
	Furthermore, if $x \in \cB_y^{1/2}$ then 
		\begin{equation}\label{eq:normchangescy}
		\normfro{\mW}{\mH_{y}} \leq  \left(\frac{1- \norm{y-x}_{y}}{1-2 \norm{y-x}_{y}}\right)^2 \normfro{\mW}{\mH_{x}} .
	\end{equation}
\end{lemma}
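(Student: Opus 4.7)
The plan is to reduce both inequalities to a single spectral bound relating $\mH_x$ and $\mH_y$, which is exactly what self-concordance provides. For the first inequality, since $\mW \in \sym^d$, I start by writing
\[
\normfro{\mW}{\mH_y} \;=\; \norm{\mH_y^{1/2}\mW\mH_y^{1/2}}_F \;=\; \norm{\mH_y^{1/2}\mH_x^{-1/2}\cdot \mH_x^{1/2}\mW\mH_x^{1/2}\cdot \mH_x^{-1/2}\mH_y^{1/2}}_F.
\]
Applying the sub-multiplicativity $\norm{ABC}_F \leq \norm{A}_2\norm{B}_F\norm{C}_2$, and using that $\mH_x^{-1/2}\mH_y^{1/2}$ is the transpose of $\mH_y^{1/2}\mH_x^{-1/2}$ (so has the same operator norm), one gets
\[
\normfro{\mW}{\mH_y} \;\leq\; \norm{\mH_y^{1/2}\mH_x^{-1/2}}_2^{\,2}\cdot \normfro{\mW}{\mH_x}.
\]

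The next step is to control the factor $\norm{\mH_y^{1/2}\mH_x^{-1/2}}_2^{\,2} = \lambda_{\max}(\mH_x^{-1/2}\mH_y\mH_x^{-1/2})$ using Definition~\ref{def:sc}. The key observation is that for any $u \neq 0$, setting $v = \mH_x^{-1/2}u$ yields $\norm{v}_x = \norm{u}_2$ and $u^\top \mH_x^{-1/2}\mH_y\mH_x^{-1/2} u = \norm{v}_y^2$, so the Rayleigh quotient of $\mH_x^{-1/2}\mH_y\mH_x^{-1/2}$ equals $\norm{v}_y^2/\norm{v}_x^2$. Since $y \in \cB_x^1$, self-concordance~\eqref{eq:selfcon} bounds this by $(1-\norm{y-x}_x)^{-2}$. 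Taking the supremum over $u$ gives $\lambda_{\max}(\mH_x^{-1/2}\mH_y\mH_x^{-1/2}) \leq (1-\norm{y-x}_x)^{-2}$, and inserting this into the previous display yields~\eqref{eq:normchangescx}.

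For the second inequality, my plan is to reduce to the first by converting the $x$-ball condition into a $y$-ball condition through another application of self-concordance. Since $x \in \cB_y^{1/2} \subset \cB_y^1$, the upper bound in~\eqref{eq:selfcon} with the roles of $x$ and $y$ interchanged, applied to the vector $v = y-x$, gives
\[
\norm{y-x}_x \;\leq\; \frac{\norm{y-x}_y}{1-\norm{y-x}_y} \;<\; 1,
\]
so in particular $y \in \cB_x^1$ and~\eqref{eq:normchangescx} is applicable. Substituting the resulting bound
\[
1 - \norm{y-x}_x \;\geq\; \frac{1-2\norm{y-x}_y}{1-\norm{y-x}_y}
\]
into~\eqref{eq:normchangescx} produces~\eqref{eq:normchangescy}.

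The whole argument is essentially routine; there is no serious obstacle once the factorization in the first display is written down. The only care needed is (i) checking that $y \in \cB_x^1$ follows from $x \in \cB_y^{1/2}$ so that Part~1 is applicable in Part~2, and (ii) handling cleanly the asymmetric roles of $\norm{y-x}_x$ and $\norm{y-x}_y$ under self-concordance.
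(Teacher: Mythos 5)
Your proof is correct, and its skeleton is the same as the paper's: the same factorization of the weighted Frobenius norm combined with the sub-multiplicativity bound $\norm{ABC}_F \leq \norm{A}_2\norm{B}_F\norm{C}_2$, giving $\normfro{\mW}{\mH_y} \leq \norm{\mH_x^{-1/2}\mH_y\mH_x^{-1/2}}_2\,\normfro{\mW}{\mH_x}$, and the identical reduction of the second inequality to the first via $\norm{y-x}_x \leq \norm{y-x}_y/(1-\norm{y-x}_y)$, which simultaneously certifies $y \in \cB_x^1$ and yields the factor $\bigl(\tfrac{1-\norm{y-x}_y}{1-2\norm{y-x}_y}\bigr)^2$. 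The one genuine difference lies in how the spectral factor is controlled. The paper rewrites it as the induced local operator norm and invokes Lemma~\ref{lem:hesscontrol}, i.e.\ the bound \eqref{eq:localhess}, which is itself imported from Theorem 2.2.1 of \citet{Renegar:2001} without proof. You instead establish $\lambda_{\max}(\mH_x^{-1/2}\mH_y\mH_x^{-1/2}) \leq (1-\norm{y-x}_x)^{-2}$ directly from Definition~\ref{def:sc}: the substitution $v = \mH_x^{-1/2}u$ turns the Rayleigh quotient of $\mH_x^{-1/2}\mH_y\mH_x^{-1/2}$ into exactly the ratio $\norm{v}_y^2/\norm{v}_x^2$ that \eqref{eq:selfcon} bounds. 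This makes your argument self-contained given only the definition of self-concordance used in the paper, at the cost of a few extra lines, whereas the paper's route is shorter and reuses a lemma it needs elsewhere anyway (e.g.\ for \eqref{eq:Hinvegradbnd} and in Lemma~\ref{lem:Bkplus1contract12}). Both of your flagged points of care --- that $x \in \cB_y^{1/2}$ implies $y \in \cB_x^1$ so Part~1 applies in Part~2, and the asymmetry between $\norm{y-x}_x$ and $\norm{y-x}_y$ --- are handled correctly and match what the paper's proof implicitly relies on.
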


\begin{proof}
	\begin{eqnarray*}
		\normfro{\mW}{\mH_{y}}
		&\leq &
		\normfro{\mW}{\mH_{x}}\norm{\mH_{x}^{-1/2} \mH_{y} \mH_{x}^{-1/2}}_2\\
		&= &	\normfro{\mW}{\mH_{x}}\norm{ \mH_{y} \mH_{x}^{-1}}_x \\
		& \overset{\eqref{eq:localhess}}{\leq} &   \frac{\normfro{\mW}{\mH_{x}}}{\left(1 - \norm{y-x}_{x}\right)^2} .
	\end{eqnarray*}
	The bound~\eqref{eq:normchangescy} follows from the fact that for $x \in \cB_y^{1/2}$  we have that $y \in \cB_x^1$ and the definition of self-concordance~\eqref{eq:selfcon} that
	\[ \norm{y-x}_{x} \leq \frac{ \norm{y-x}_{y}}{1- \norm{y-x}_{y}} \qquad \Rightarrow \qquad \frac{1}{1-\norm{y-x}_x} \leq \frac{1- \norm{y-x}_{y}}{1-2 \norm{y-x}_{y}}.\]

\end{proof}

\subsection{The distance of the iterates}

Next we need an upper bound on $\norm{x_{k+1}-x_*}_{*}^2$.

\begin{lemma} \label{lem:iteratescbnd}  If $x \in \cB_{x_*}^1$ then

\begin{align}
	\norm{x_{k+1} - x_*}_* & \leq \frac{\norm{x_k-x_*}_*^2}{2}\frac{3-2\norm{x_k-x_*}_*}{(1-\norm{x_k-x_*}_*)^2}+\frac{1}{2}\normfro{\mB_k - \mH_*^{-1}}{\mH_*}^2.
\end{align}
Consequently if $x \in \cB_{x_*}^{1/4}$ we have that
\begin{align}
	\norm{x_{k+1} - x_*}_* & \leq 5\norm{x_k-x_*}_*^2+ \frac{1}{2}\normfro{\mB_k - \mH_*^{-1}}{\mH_*}.\label{eq:iteratescbnd14}
\end{align}
% \rob{
%If $\norm{x_k-x_*}_* \leq 1/5$  and $\normfro{\mB_k - \mH_*^{-1}}{\mH_*} \leq 1/5$ then this gives that 
%$$\norm{x_{k+1} - x_*}_*  \leq  \frac{9}{2} \frac{1}{25} + \frac{1}{2} \frac{1}{25}  = \frac{10}{50} < \frac{1}{4}$$ .
%% If $\norm{x_k-x_*}_* \leq 1/4$  and $\normfro{\mB_k - \mH_*^{-1}}{\mH_*} \leq 1/4$ then this gives that 
%%$$\norm{x_{k+1} - x_*}_*  \leq  \frac{9}{2} \frac{1}{16} + \frac{0.5}{16}  = \frac{5}{16} .$$
%}

%\rob{
%What is $\max_{t\leq 1/4}f(t)$ where $f(t) \eqdef \frac{3-2t}{(t-1)^2}$? Taking the derivative
%and using that
%$$ (f/g)' = (f' g - fg' )/g^2$$ 
%$$ f(t)' = \frac{-2(t-1)^2 -2(3-2t)(t-1)}{(t-1)^4} = -2\frac{(t-1)^2 +(3-2t)(t-1)}{(t-1)^4}  =  -2\frac{(t-1) +(3-2t)}{(t-1)^3}  =
% -2\frac{2-t}{(t-1)^3} >0     $$
%for $t<1/4.$ Thus the maximum is at $t =\frac{1}{4}$  which gives $f(1/4) \leq 4.5$ 
% }
\end{lemma}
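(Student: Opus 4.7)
The natural decomposition is to add and subtract the exact Newton step with Hessian at $x_*$. Writing
\[
x_{k+1}-x_* \;=\; \bigl(x_k-x_* - \mH_*^{-1}\nabla f(x_k)\bigr) \;+\; \bigl(\mH_*^{-1} - \mB_k\bigr)\nabla f(x_k),
\]
and applying the triangle inequality in the local norm $\norm{\cdot}_*$ reduces the lemma to bounding the two pieces separately. The first piece is a ``Newton-step residual'' for which equation~\eqref{eq:NewtonstepHstarz} gives the clean bound $\norm{x_k-x_*}_*^2/(1-\norm{x_k-x_*}_*)$; this is exactly the classical quadratic convergence term.

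The second piece measures how far $\mB_k$ is from $\mH_*^{-1}$, and here I would pass to the operator norm induced by $\mH_*$. Since for any $\mW \in \sym^d$ we have $\norm{\mW}_* = \norm{\mH_*^{1/2}\mW\mH_*^{1/2}}_2 \le \norm{\mH_*^{1/2}\mW\mH_*^{1/2}}_F = \normfro{\mW}{\mH_*}$, submultiplicativity yields
\[
\norm{(\mH_*^{-1}-\mB_k)\nabla f(x_k)}_* \;\le\; \normfro{\mB_k-\mH_*^{-1}}{\mH_*}\cdot \norm{\mH_*^{-1}\nabla f(x_k)}_*.
\]
The second factor is controlled by~\eqref{eq:Hinvegradbnd}, giving $\norm{\mH_*^{-1}\nabla f(x_k)}_* \le \norm{x_k-x_*}_*/(1-\norm{x_k-x_*}_*)$. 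I would then apply the AM-GM inequality $ab \le \tfrac{1}{2}a^2+\tfrac{1}{2}b^2$ with $a=\normfro{\mB_k-\mH_*^{-1}}{\mH_*}$ and $b=\norm{x_k-x_*}_*/(1-\norm{x_k-x_*}_*)$ to split off the squared Frobenius term and keep the residual in $\norm{x_k-x_*}_*$.

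Combining the two pieces, the coefficient of $\norm{x_k-x_*}_*^2$ becomes
\[
\frac{1}{1-\norm{x_k-x_*}_*} + \frac{1}{2(1-\norm{x_k-x_*}_*)^2} \;=\; \frac{3-2\norm{x_k-x_*}_*}{2(1-\norm{x_k-x_*}_*)^2},
\]
which is precisely the main inequality. The bound~\eqref{eq:iteratescbnd14} then follows by substituting $\norm{x_k-x_*}_* \le 1/4$, since the displayed rational function is increasing in its argument on $[0,1)$ and evaluates to a constant below $5$ at $1/4$ (giving plenty of slack).

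\textbf{Main obstacle.} There is no genuinely hard step here; the only subtlety is choosing the ``right'' decomposition so that (i) the approximate Newton step term can be matched against the exact Newton step via~\eqref{eq:NewtonstepHstarz} expressed with respect to $\mH_*$ rather than $\mH_k$, and (ii) the AM-GM split produces $\tfrac{1}{2}\normfro{\mB_k-\mH_*^{-1}}{\mH_*}^2$ rather than the linear Frobenius term, since the squared form is what feeds into the Lyapunov recursion in Theorem~\ref{theo:sc}. Centering both error terms at $x_*$ (as opposed to $x_k$) is essential because the induction in the main theorem is carried out entirely in the norm $\norm{\cdot}_*$ to avoid having to change metrics at each iteration.
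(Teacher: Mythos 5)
Your proposal is correct and follows the paper's own proof essentially step for step: the same decomposition through the exact Newton step at $x_*$, the same two bounds \eqref{eq:NewtonstepHstarz} and \eqref{eq:Hinvegradbnd}, the same domination by $\normfro{\mB_k-\mH_*^{-1}}{\mH_*}$, and the same AM-GM split producing the coefficient $\frac{3-2\norm{x_k-x_*}_*}{2(1-\norm{x_k-x_*}_*)^2}$. One cosmetic caveat: the induced norm is $\norm{\mW}_* = \norm{\mH_*^{1/2}\mW\mH_*^{-1/2}}_2$ rather than $\norm{\mH_*^{1/2}\mW\mH_*^{1/2}}_2$; the paper handles this by first factoring $(\mB_k-\mH_*^{-1})\nabla f(x_k) = (\mB_k\mH_*-\mI)\,\mH_*^{-1}\nabla f(x_k)$, after which the congruence bound $\norm{\mH_*^{1/2}\mB_k\mH_*^{1/2}-\mI}_2 \leq \norm{\mH_*^{1/2}\mB_k\mH_*^{1/2}-\mI}_F = \normfro{\mB_k-\mH_*^{-1}}{\mH_*}$ is exactly what is needed, so the inequality you assert is valid as written.
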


\begin{proof}
It starts with
\begin{eqnarray*}
	\norm{x_{k+1} - x_*}_*
	&=&
	\norm{x_k - x_* - \mB_k\nabla f(x_k)}_* \nonumber \\
	&=&
	\norm{x_k - x_* - \mH_*^{-1}\nabla f(x_k)  + (\mH_*^{-1} - \mB_k)\nabla f(x_k)}_{*} \nonumber  \\
	&\leq &
	\norm{x_k - x_* - \mH_*^{-1}\nabla f(x_k)}_*
	+
	\norm{(\mH_*^{-1} - \mB_k)\nabla f(x_k)}_*\nonumber \\
	&\leq &
	\frac{\norm{x_k-x_*}_*^2}{1-\norm{x_k-x_*}_*}
	+
	\norm{(\mB_k-\mH_*^{-1} )\nabla f(x_k)}_* , \label{eq:s9k4ks9ks4}
	\end{eqnarray*}
where we used~\eqref{eq:NewtonstepHstarz} in the last step.
As for the second term we have that
\begin{eqnarray*}
\norm{(\mB_k- \mH_*^{-1})\nabla f(x_k)}_*  & = &
\norm{(\mB_k\mH_*- \mI)\mH_*^{-1}\nabla f(x_k)}_*  \nonumber \\
& \leq &  \norm{\mB_k\mH_*- \mI}_* \norm{\mH_*^{-1}\nabla f(x_k)}_* \nonumber \\
& \overset{\eqref{eq:Hinvegradbnd}}{\leq} &  \normfro{\mB_k - \mH_*^{-1}}{\mH_*}\frac{\norm{x_k-x_*}_{*}}{1- \norm{x_k-x_*}_{*}},
\end{eqnarray*}
where we used that 
\[  \norm{\mB_k\mH_*- \mI}_*  =  \norm{\mH_*^{1/2}\mB_k\mH_*^{1/2}- \mI}_2 \leq  \norm{\mH_*^{1/2}\mB_k\mH_*^{1/2}- \mI}_F =  \normfro{\mB_k- \mH_*^{-1}}{\mH_*}.\]
Finally using that $ab \leq \frac{a^2}{2} + \frac{b^2}{2}$ for all $a,b>0$ with $a = \normfro{\mB_k - \mH_*^{-1}}{\mH_*}$ and $b =\frac{\norm{x_k-x_*}_{*}}{1- \norm{x_k-x_*}_{*}}$ gives
\begin{align}
\norm{(\mB_k- \mH_*^{-1})\nabla f(x_k)}_*  
& \leq  \frac{1}{2}\normfro{\mB_k - \mH_*^{-1}}{\mH_*}^2 + \frac{1}{2}\frac{\norm{x_k-x_*}_{*}^2}{(1- \norm{x_k-x_*}_{*})^2}. \label{eq:sae89js84js}
\end{align}
The above combined with~\eqref{eq:s9k4ks9ks4} gives the result.
\end{proof}

\subsection{The distance of the quasi-Newton matrix}

We  start by establishing a lemma.
\begin{lemma} \label{lem:Bkplus1contract12} If $x \in \cB_{x_*}^{1}$ then
	\begin{equation}\label{eq:Bkplus1contract12}
	\EE{k}{\norm{\mB_{k+1} - \mH_*^{-1}}_{F(\mH_k)}^2}
	\leq
	(1-\rho)
	\norm{\mB_k - \mH_*^{-1}}_{F(\mH_k)}^2
	+d \norm{x_k-x_*}_{*}^2  \frac{(2 - \norm{x_k-x_*}_{*})^2}{(1 - \norm{x_k-x_*}_{*})^4},
	\end{equation}
	where 
	\begin{equation}\label{eq:rhoappen}
		 \rho \eqdef \inf\limits_{x \in \R^d} \lambda_{\min}\left(\E{\mH_{x}^{1/2} \mS(\mS^\top \mH_{x} \mS)^{-1}\mS^\top \mH_{x}^{1/2}}\right).
\end{equation} 
Consequently, if $x \in \cB_{x_*}^{1/4}$, then
		\begin{equation}\label{eq:Bkplus1contract14}
	\EE{k}{\norm{\mB_{k+1} - \mH_*^{-1}}_{F(\mH_k)}^2}
	\leq
	(1-\rho) \norm{\mB_k - \mH_*^{-1}}_{F(\mH_k)}^2
	+10 d \norm{x_k-x_*}_{*}^2.
	\end{equation} 
\end{lemma}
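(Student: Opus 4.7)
The plan is to exploit the projection interpretation of the BFGS update. Writing $\mB_{k+1} = \mG_k + \mA_k \mB_k \mA_k^\top$ with $\mA_k \eqdef \mI - \mG_k\mH_k$, one verifies (via the Gower--Richt\'arik framework) that this is the $F(\mH_k)$-orthogonal projection of $\mB_k$ onto the affine secant set $\cC_k \eqdef \{X\in\sym^d : X\mH_k\mS_k = \mS_k\}$, whose parallel subspace is $\cT_k \eqdef \{Y\in\sym^d:Y\mH_k\mS_k=0\}$. I would introduce $\hat\mB_* \eqdef \mG_k + \mA_k\mH_*^{-1}\mA_k^\top$, the projection of $\mH_*^{-1}$ onto $\cC_k$, and set $\mDelta \eqdef \mH_*^{-1} - \mH_k^{-1}$. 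A direct identity-check gives $\mB_{k+1} - \hat\mB_* = \mA_k(\mB_k - \mH_*^{-1})\mA_k^\top\in\cT_k$, whereas $\hat\mB_* - \mH_*^{-1} = -(\mDelta - \mA_k\mDelta\mA_k^\top)\in\cT_k^{\perp}$ is the projection residual of $\mH_*^{-1}$. These components are $F(\mH_k)$-orthogonal, so the Pythagorean theorem yields
\[
\norm{\mB_{k+1} - \mH_*^{-1}}^2_{F(\mH_k)} = \norm{\mA_k(\mB_k - \mH_*^{-1})\mA_k^\top}^2_{F(\mH_k)} + \norm{\mDelta - \mA_k\mDelta\mA_k^\top}^2_{F(\mH_k)}.
\]

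Taking $\EE{k}{\cdot}$, there are two tasks: contract the first summand by exactly $(1-\rho)$ and bound the second by the stated error term. For the first, the key observation is that $\mH_k^{1/2}\mA_k\mH_k^{-1/2} = \mI - \mP_k$ with $\mP_k \eqdef \mH_k^{1/2}\mS_k(\mS_k^\top\mH_k\mS_k)^{-1}\mS_k^\top\mH_k^{1/2}$ an orthogonal projection. Setting $\tilde\mY \eqdef \mH_k^{1/2}(\mB_k-\mH_*^{-1})\mH_k^{1/2}$, one gets
\[
\norm{\mA_k(\mB_k - \mH_*^{-1})\mA_k^\top}^2_{F(\mH_k)} = \norm{(\mI-\mP_k)\tilde\mY(\mI-\mP_k)}_F^2 \le \norm{(\mI-\mP_k)\tilde\mY}_F^2 = \trace{\tilde\mY^2(\mI-\mP_k)},
\]
and because $\tilde\mY^2\succeq 0$ and $\E{\mP_k}\succeq\rho\mI$ by \eqref{eq:rhoappen}, taking expectation produces exactly $(1-\rho)\trace{\tilde\mY^2} = (1-\rho)\norm{\mB_k-\mH_*^{-1}}^2_{F(\mH_k)}$.

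For the second summand, since $\mDelta$ is independent of $\mS_k$ and $\mY\mapsto\mY-\mA_k\mY\mA_k^\top$ is a projection, it suffices to bound the deterministic quantity $\norm{\mDelta}^2_{F(\mH_k)} = \norm{\mH_k^{1/2}\mH_*^{-1}\mH_k^{1/2}-\mI}_F^2 \le d\,\norm{\mH_k^{1/2}\mH_*^{-1}\mH_k^{1/2}-\mI}_2^2$. The matrix inside is similar to $\mH_*^{-1}\mH_k-\mI$, so Lemma~\ref{lem:hesscontrol} applied at $x=x_*$, $y=x_k$ gives
\[
\norm{\mH_k^{1/2}\mH_*^{-1}\mH_k^{1/2}-\mI}_2 = \norm{\mI-\mH_*^{-1}\mH_k}_*\le\frac{1}{(1-\norm{x_k-x_*}_*)^2}-1 = \frac{\norm{x_k-x_*}_*(2-\norm{x_k-x_*}_*)}{(1-\norm{x_k-x_*}_*)^2},
\]
and squaring yields the claimed error term in \eqref{eq:Bkplus1contract12}. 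The companion statement \eqref{eq:Bkplus1contract14} then follows from the elementary numerical fact that $(2-t)^2/(1-t)^4\le 10$ for $t\in[0,1/4]$.

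The main technical hurdle is keeping the contraction constant exactly $(1-\rho)$ in the first step: a naive triangle inequality from $\mH_k^{-1}$ to $\mH_*^{-1}$ would only deliver a weaker $(1+\epsilon)(1-\rho)$ and would kill the $1-\rho/2$ recurrence needed for Theorem~\ref{theo:sc}. The projection/Pythagorean viewpoint is precisely what avoids this loss, since the cross-term between $\mA_k(\mB_k-\mH_*^{-1})\mA_k^\top$ and $\mDelta-\mA_k\mDelta\mA_k^\top$ vanishes identically in the $F(\mH_k)$-inner product.
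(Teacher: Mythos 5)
Your proposal is correct and, after conjugation by $\mH_k^{1/2}$, is essentially the paper's own argument: your splitting $\mB_{k+1}-\mH_*^{-1}=\mA_k(\mB_k-\mH_*^{-1})\mA_k^\top-\bigl(\mD-\mA_k\mD\mA_k^\top\bigr)$ with $\mD=\mH_*^{-1}-\mH_k^{-1}$ is exactly the paper's decomposition into the terms $(I)$, $(II)$, $(III)$, and your two bounds (the $(1-\rho)$-contraction via $\E{\mZ_k}\succeq\rho\,\mI$, and the bound on $\normfro{\mH_k-\mH_*}{\mH_*^{-1}}$ via Lemma~\ref{lem:hesscontrol} plus the similarity of $\mH_k^{1/2}\mH_*^{-1}\mH_k^{1/2}$ and $\mH_*^{-1/2}\mH_k\mH_*^{-1/2}$) coincide with the paper's steps. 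The only point where you go beyond the paper is the cross term: the paper merely asserts $(III)=0$, whereas your projection/Pythagorean viewpoint supplies an actual proof of that orthogonality.
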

\begin{proof}

	We define the following projection matrix:
	\begin{equation}
	\mZ_k = \mH_k^{1/2}\mS(\mS^\top \mH_k\mS)^{-1}\mS^\top \mH_k^{1/2}.
	\end{equation}
	Hence from~\eqref{eq:BFGS} we have that
	\begin{equation}
	\mB_{k+1} = \mH_k^{-1/2}
	\left(
	\mZ_k + (\mI - \mZ_k)\mH_k^{1/2} \mB_k \mH_k^{1/2}(\mI - \mZ_k)
	\right)
	\mH_k^{-1/2}.
	\end{equation}
	To abbreviate the calculations we define  the linear operator $P : \sym^d \rightarrow \sym^d$:
	\begin{equation}
	P(\mA) \eqdef (\mI - \mZ_k) \mA (\mI - \mZ_k).
	\end{equation}
	
	Using this notation we rewrite $\norm{\mB_{k+1} - \mH_*^{-1}}_{F(\mH_k)}^2$:
	\begin{eqnarray*}
	\norm{\mB_{k+1} - \mH_*^{-1}}_{F(\mH_k)}^2
	&=&
	\norm{\mH_k^{1/2}(\mB_{k+1} - \mH_*^{-1})\mH_k^{1/2}}_{F}^2\\
	&=&
	\norm{\mZ_k + P(\mH_k^{1/2} \mB_k \mH_k^{1/2}) - \mH_k^{1/2}\mH_*^{-1}\mH_k^{1/2}}_{F}^2\\
	&=&
	\norm{P(\mH_k^{1/2} (\mB_k - \mH_*^{-1}) \mH_k^{1/2}) + P(\mH_k^{1/2} \mH_*^{-1} \mH_k^{1/2}) + \mZ_k  - \mH_k^{1/2}\mH_*^{-1}\mH_k^{1/2}}_{F}^2.
	\end{eqnarray*}
	Now note that the following equality holds
	\begin{equation}\label{eq:ZeqIPI}
	\mZ_k = \mI - P(\mI) = \mH_k^{1/2} \mH_k^{-1} \mH_k^{1/2} - P(\mH_k^{1/2} \mH_k^{-1} \mH_k^{1/2}).
	\end{equation}
	Using this equality and the shorthand
	\[\mR_k \eqdef \mH_k^{1/2}(  \mB_k-\mH_*^{-1})\mH_k^{1/2} \in \sym^d \qquad \mbox{and} \qquad
	\mD_k \eqdef \mH_k^{1/2}~(\mH_*^{-1} - \mH_k^{-1})\mH_k^{1/2} \in \sym^d,\]	
	we have that
	\begin{align}
	\norm{\mB_{k+1} - \mH_*^{-1}}_{F(\mH_k)}^2
	&=
	\norm{P(\mH_k^{1/2} (\mB_k - \mH_*^{-1}) \mH_k^{1/2}) + P(\mH_k^{1/2}( \mH_*^{-1} - \mH_k^{-1}) \mH_k^{1/2}) - \mH_k^{1/2}(\mH_*^{-1} - \mH_k^{-1})\mH_k^{1/2}}_{F}^2 \nonumber\\
	&=
	\underbrace{
		\norm{P(\mR_k)}_F^2
	}_{(I)}
	+
	\underbrace{
		\norm{P(\mD_k) - \mD_k}_{F}^2
	}_{(II)}+
	\underbrace{
		2\dotprod{P(\mR_k), P(\mD_k) - \mD_k}
	}_{(III)}.\label{eq:I-II-III}
	\end{align}
	
	One can show that $(III) = 0$. Note that $(I)$ has the following upper-bound:
	\begin{eqnarray}
	\norm{P(\mR_k)}_F^2 & 	= &
	\trace{\mR_k(\mI - \mZ_k)\mR_k(\mI - \mZ_k)}\nonumber\\
	&=&
	\trace{\mR_k(\mI - \mZ_k)\mR_k} 	-
	\trace{\mZ_k\mR_k(\mI - \mZ_k)\mR_k \mZ_k}\nonumber\\
	& \leq &
	\trace{\mR_k(\mI - \mZ_k)\mR_k}. \label{eq:Ibnd2323}
	\end{eqnarray}
	Taking expectation conditioned on $x_k$ now gives
		\begin{eqnarray}
	\EE{k}{\norm{P(\mR_k)}_F^2 }& \leq & \trace{\mR_k(\mI -\E{\mZ_k})\mR_k} \nonumber\\
	& \leq & (1- \lambda_{\min}(\E{\mZ_k})) 	\norm{\mB_k - \mH_*^{-1}}_{F(\mH_k)}^2 \\
	& \overset{\eqref{eq:rhoappen}}{\leq} & (1-\rho) 	\norm{\mB_k - \mH_*^{-1}}_{F(\mH_k)}^2,\label{eq:Ibnd}
	\end{eqnarray}
	where we used that $\trace{\mA\mB} \geq \lambda_{\min}(\mA) \trace{\mB}$ for any symmetric positive semi-definite matrices $\mA$ and $\mB.$
	Furthermore, $(II)$ has the following upper-bound:
	\begin{eqnarray}
	\norm{P(\mD_k) - \mD_k}_{F}^2
	&= &
	\trace{
		\left[
		\mD_k
		-
		(\mI - \mZ_k)
		\mD_k
		(\mI - \mZ_k)
		\right]^2
	}\nonumber \\
	&=&
	\norm{\mD_k}_F^2-
	\trace{
		(\mI - \mZ_k)
		\mD_k(\mI - \mZ_k)
		\mD_k(\mI - \mZ_k)
	}\nonumber \\
	&\leq &
%	\norm{\mD_k}_F^2 - \norm{\mH_*^{-1/2} (\mH_k - \mH_*)\mH_*^{-1/2}}_F^2 \nonumber \\
% 	&= &
 	\normfro{\mH_k - \mH_*}{\mH_*^{-1}}^2,
	\label{eq:needthismuchlater}
	\end{eqnarray}
%\rob{ Where in the last step we used that
%$$	\norm{\mD_k}_F^2  \eqdef \norm{\mH_k^{1/2}~(\mH_*^{-1} - \mH_k^{-1})\mH_k^{1/2}}_F^2
%= \trace{(\mH_*^{-1} - \mH_k^{-1})\mH_k~(\mH_*^{-1} - \mH_k^{-1})\mH_k}
%	   $$
%	   $$  = \trace{\mH_*^{-1}(\mH_k -\mH_* )~\mH_*^{-1}(\mH_k -\mH_* )}  
%	   =  \normfro{\mH_k - \mH_*}{\mH_*^{-1}}^2  $$
%}
	where we used that $	\trace{
		(\mI - \mZ_k)
		\mD_k(\mI - \mZ_k)
		\mD_k(\mI - \mZ_k)
	} \geq 0$ since the matrix within this trace is symmetric positive semi-definite.
	Using the above together with~\eqref{eq:Ibnd} and~\eqref{eq:I-II-III} gives
	\begin{equation}\label{eq:s9dfhdhsfs}
	\EE{k}{\norm{\mB_{k+1} - \mH_*^{-1}}_{F(\mH_k)}^2}
	\leq
	(1-\rho)
	\norm{\mB_k - \mH_*^{-1}}_{F(\mH_k)}^2
	+\normfro{\mH_k - \mH_*}{\mH_*^{-1}}^2.
	\end{equation}

	Now we need to bound the term $\normfro{\mH_k - \mH_*}{\mH_*^{-1}}$ using the properties of self-concordant functions. For $x_k \in \cB_{*}(x_*,1)$, we have 
	\begin{eqnarray*}
	 \normfro{\mH_k - \mH_*}{\mH_*^{-1}}^2 &= & \norm{\mH_*^{-1/2}(\mH_k - \mH_*)\mH_*^{-1/2}}_F^2 \nonumber  \\
	 &\leq & d \norm{\mH_*^{-1/2}(\mH_k - \mH_*)\mH_*^{-1/2}}_2^2 \nonumber  \\
	 & = &   d \norm{\mH_*^{-1}\mH_k - \mI}_{*}^2   \nonumber \\
	 & \overset{\eqref{eq:localhessdiff}}{\leq} & d \left(\frac{1}{\left(1 - \norm{x_k-x_*}_{*}\right)^2}-1\right)^2  \\
	 & = & d \norm{x_k-x_*}_{*}^2 \left(\frac{2 - \norm{x_k-x_*}_{*}}{(1 - \norm{x_k-x_*}_{*})^2}\right)^2.
	\end{eqnarray*}

	For $x \in \cB_{x_*}^{1/4}$ the final bound~\eqref{eq:Bkplus1contract14} follows by using that
	$$\frac{(2 - \norm{x_k-x_*}_{*})^2}{(1 - \norm{x_k-x_*}_{*})^4} \leq  \max_{\frac{1}{4} \geq t\geq 0} \frac{(2 - t)^2}{(1 - t)^4} = 10. $$
%\rob{Indeed
%% If we used 1/2 instead we'd get
%%$$ \frac{(2 - \frac{1}{2})^2}{(\frac{1}{2})^4} =\frac{(\frac{3}{2} )^2}{(\frac{1}{2})^4} = 2^2 3^2 = 36!$$ 
%%or instead
%$$ \frac{(2 - \frac{1}{4})^2}{(\frac{3}{4})^4} =\frac{\frac{7^2}{2^4} }{(\frac{3^4}{2^2})^4} = \frac{2^8}{3^4} \frac{7^2}{2^4} = 2^4  \frac{7^2}{3^4} \leq 10 $$ makes  sense. 
%This follow because
%$$ \left(\frac{(t -2)^2}{(t - 1)^4} \right) ' = \frac{ 2(t-2)(t-1)^4 - 4(t-2)^2(t-1)^3}{(t - 1)^8} =  \frac{ 2(t-2)(t-1) - 4(t-2)^2}{(t - 1)^5}   $$
%$$ = \frac{ 2(t^2 -3t +2)  - 4(t^2-4t +4)}{(t - 1)^5} =\frac{-2t^2 -22t -12 }{(t - 1)^5} >0 ,$$
%since $t-1 <0.$ Thus it is an increasing function  in $t$ and the maximum is achieved at $t=\frac{1}{2}$
%}
\end{proof}

\subsection{Detailed proof of Theorem~\ref{theo:sc}}

%\rob{Here do we have that $\norm{x_k-x_*}_{*} <1/4$ ? And if so, will it remain close with $\norm{x_{k+1}-x_*}_{*} < 1/4$? Instead all we have is~\eqref{eq:iteratescbnd14} which is not tight enough. Need to also assume something about $\mB_0$ being close enough.}

For convenience we repeat the statement of the theorem here.
\begin{theorem} \label{theo:scap}
Let
	\begin{equation}\label{eq:rho2ap} \rho \eqdef \inf\limits_{x \in \R^d} \lambda_{\min}\left(\E{\mH_{x}^{1/2}\mS(\mS^\top  \mH_x \mS)^{-1}\mS^\top \mH_{x}^{1/2}}\right).
	\end{equation}
 Consider the Lyapunov function
\begin{equation}\label{eq:lyapunov}
\Phi_{\sigma}^k  \eqdef  \sigma\norm{\mB_{k} - \mH_*^{-1}}_{F(\mH_*)}^2 +   \norm{x_k -x_*}_*,
\end{equation}
where $\sigma = \frac{3}{ \rho}$.
If $f$ is self-concordant and 
%\rob{Missing induction so that the same holds for $\norm{x_k-x_*}_{*}$. One fix is to assume this bound below holds for $\Phi_{\sigma}^0$, then by contraction it holds for all $\Phi_{\sigma}^k$ and thus for $\norm{x_k-x_*}_{*}$ also. }
\begin{equation}\label{eq:phiboundinitial}\Phi_{\sigma}^0 \leq  \frac{1}{2}\min \left \{ \frac{3}{2} - \frac{1}{2}\sqrt{1+ 8\sqrt{\frac{1-\rho}{1-\frac{2}{3}\rho}}},  \;\;   \rho\frac{2-\rho}{69  d  +5 \rho}  % Previously\rho \frac{2-\rho}{90  d  +10 \rho}
 \right\} ,
\end{equation}
%and 
%\begin{equation}\label{eq:lessthanquarter}
%  \sigma\norm{\mB_{0} - \mH_*^{-1}}_{F(\mH_*)}^2 +   \norm{x_0 -x_*}_*  \; \leq \; \frac{1}{4},
%\end{equation}
then Algorithm~\ref{alg:BFGS} converges linearly according to
\begin{equation}\label{eq:Phirecurap}
\E{\Phi_{\sigma}^{k+1}  }\; \leq \; \left( 1- \frac{\rho}{2}\right)  \E{\Phi_{\sigma}^{k}}.
\end{equation}
Unrolling this recurrence, we get
$
\E{\norm{x_k -x_*}_*} \; \leq \;  \left( 1- \frac{\rho}{2}\right)^k \Phi_{\sigma}^0 .
$
\end{theorem}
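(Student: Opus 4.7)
The proof goes by induction on $k$, aiming at the conditional contraction $\EE{k}{\Phi_\sigma^{k+1}} \le (1-\rho/2)\,\Phi_\sigma^k$, after which taking expectation yields~\eqref{eq:Phirecurap}. Since \eqref{eq:phiboundinitial} in particular forces $\Phi_\sigma^0 \le 1/4$, the inductive invariant $\Phi_\sigma^k \le \Phi_\sigma^0$ keeps $x_k \in \cB_{x_*}^{1/4}$ (using $\norm{x_k-x_*}_* \le \Phi_\sigma^k$ by definition of the Lyapunov), so the simplified forms~\eqref{eq:iteratescbnd14} and~\eqref{eq:Bkplus1contract14} of the two auxiliary lemmas are available at every step.

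The core algebraic step is to combine these two bounds. Because Lemma~\ref{lem:Bkplus1contract12} is stated in the moving metric $F(\mH_k)$ while the Lyapunov lives in the fixed metric $F(\mH_*)$, I would first translate both sides by applying the self-concordance norm-change inequalities~\eqref{eq:normchangescx}--\eqref{eq:normchangescy}; using $\norm{x_k-x_*}_{x_k} \le t/(1-t)$ with $t:=\norm{x_k-x_*}_*$ to convert the local scalar into the fixed one, this yields a bound of the shape $\EE{k}{\normfro{\mB_{k+1}-\mH_*^{-1}}{\mH_*}^2} \le (1-\rho)\alpha(t)\normfro{\mB_k - \mH_*^{-1}}{\mH_*}^2 + 10d\,\beta(t)\,t^2$, for explicit self-concordance factors $\alpha(t),\beta(t)$ both tending to $1$ as $t \downarrow 0$. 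Weighting by $\sigma = 3/\rho$ and adding \eqref{eq:iteratescbnd14} then produces
\begin{equation*}
\EE{k}{\Phi_\sigma^{k+1}} \;\le\; \bigl(\sigma(1-\rho)\alpha(t) + \tfrac{1}{2}\bigr)\normfro{\mB_k-\mH_*^{-1}}{\mH_*}^2 \;+\; \bigl(10d\,\sigma\,\beta(t) + 5\bigr)t^2.
\end{equation*}

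The weight $\sigma = 3/\rho$ is tailor-made so that at $\alpha(t)=1$ the first coefficient collapses to $\sigma(1-\rho/2)-1$, leaving unit slack into which the perturbation $\alpha(t)>1$ can be absorbed. Requiring $\sigma(1-\rho)\alpha(t) + 1/2 \le \sigma(1-\rho/2)$ becomes a scalar condition on $t$, whose extremal solution --- after plugging in the closed form of $\alpha(t)$ (a product of $(1-t)^{\pm 2}$ and $(1-2t)^{\pm 2}$ factors coming from~\eqref{eq:normchangescx}--\eqref{eq:normchangescy}) and solving the resulting quartic-like inequality --- yields exactly the first branch of \eqref{eq:phiboundinitial}. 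For the second coefficient, I would write $t^2 = t\cdot t \le \Phi_\sigma^0 \cdot t$ via the inductive hypothesis; then demanding $(10d\,\sigma\,\beta(t)+5)\Phi_\sigma^0 \le 1-\rho/2$, with $\beta(t)$ controlled by the ceiling imposed by the first branch, yields the second branch $\rho(2-\rho)/(69d + 5\rho)$. Summing, $\EE{k}{\Phi_\sigma^{k+1}} \le (1-\rho/2)\Phi_\sigma^k$, which closes the induction and, after iterating in expectation, gives the theorem.

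The main obstacle I foresee is precisely this norm-change bookkeeping: tracking two distinct self-concordance factors through the whole derivation and then inverting a scalar inequality in $t$ to obtain a closed-form threshold on $\Phi_\sigma^0$ is exactly where the somewhat opaque algebraic expression in the first branch of \eqref{eq:phiboundinitial} originates. A secondary technical point is the mismatch between the almost-sure statement of Lemma~\ref{lem:iteratescbnd} and the conditional-expectation statement of Lemma~\ref{lem:Bkplus1contract12}, but this is handled routinely by applying $\EE{k}{\cdot}$ at the outermost level before adding the two bounds together.
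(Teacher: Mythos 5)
Your proposal follows the paper's proof essentially step for step: the same induction keeping $x_k \in \cB_{x_*}^{1/4}$, the same combination of Lemmas~\ref{lem:iteratescbnd} and~\ref{lem:Bkplus1contract12} after the norm changes \eqref{eq:normchangescx}--\eqref{eq:normchangescy}, the same weighting $\sigma = 3/\rho$, and the same two scalar conditions producing the two branches of \eqref{eq:phiboundinitial} (your ``unit slack'' observation at $\alpha(t)=1$ is exactly the paper's intermediate target $\frac{1-\rho}{(1-t)^2(1-2t)^2}\le 1-\frac{2\rho}{3}$, since $\sigma\rho/3=1$). The paper carries this out with the explicit constant $\beta(t)\le 2.25$ (hence the coefficient $23\sigma d+5$) and solves your ``quartic-like'' inequality in $t$ in its Lemma~\ref{lem:iteratesbnd}, yielding precisely the threshold $t \le \frac{3}{4}-\frac{1}{4}\sqrt{1+8\sqrt{\frac{1-\rho}{1-\frac{2}{3}\rho}}}$, so your route is correct and is the paper's argument.
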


\begin{proof}
We will proof~\eqref{eq:Phirecurap} by induction. 
%We also need the additional induction hypothesis that $ \norm{x_k -x_*}_* \leq \frac{1}{4}$ for all $k.$ Note that for $k=0$ we have from~\eqref{eq:lessthanquarter} that  $ \norm{x_0 -x_*}_* \leq \frac{1}{4}.$ Assuming that $\norm{x_k -x_*}_* \leq \frac{1}{4}$.
Assuming 
\begin{equation}\label{eq:Phiinductproof}
\Phi_{\sigma}^k \leq  \frac{1}{2}\min \left \{ \frac{3}{2} - \frac{1}{2}\sqrt{1+ 8\sqrt{\frac{1-\rho}{1-\frac{2}{3}\rho}}},  \;\;   \rho\frac{2-\rho}{69  d  +5 \rho}  % Previously\rho \frac{2-\rho}{90  d  +10 \rho}
 \right\} ,
\end{equation}  we will now show that it holds for $k+1$ and that~\eqref{eq:Phirecurap} holds. Note that 
due to~\eqref{eq:Phiinductproof} we have that 
\begin{equation}\label{eq:quarterbdxk}
 \norm{x_k -x_*}_* \leq \frac{1}{4}.
\end{equation}
  Indeed, since 
$$ \norm{x_k -x_*}_* \leq \frac{\rho}{2}  \frac{2-\rho}{69  d  +5 \rho} \; \implies \; \norm{x_k -x_*}_* \leq \frac{1}{4},$$
which holds because $0 \leq \rho \leq 1.$ 
Throughout the proof we will use many of the preceding intermediate results, many of which rely on~\eqref{eq:quarterbdxk}.

Taking expectation conditioned on $x_k$, using~\eqref{eq:normchangescy} and then Lemma~\ref{lem:Bkplus1contract12} gives
\begin{eqnarray*}
\EE{k}{\norm{\mB_{k+1} - \mH_*^{-1}}_{F(\mH_*)}^2 }&\leq & \left(\frac{1- \norm{x_k-x_*}_{*}}{1-2 \norm{x_k-x_*}_{*}}\right)^2\norm{\mB_{k+1} - \mH_*^{-1}}_{F(\mH_{k})}^2\nonumber \\
 & \leq &    \left(\frac{1- \norm{x_k-x_*}_{*}}{1-2 \norm{x_k-x_*}_{*}}\right)^2\left( (1-\rho)
	\norm{\mB_{k} - \mH_*^{-1}}_{F(\mH_{k})}^2 + 10 d  \norm{x_k-x_*}_{*}^2 \right) \nonumber\\
	& \leq &    \frac{1-\rho}{(1-2 \norm{x_k-x_*}_{*})^2}
	\norm{\mB_{k} - \mH_*^{-1}}_{F(\mH_{k})}^2  \\
	&& \quad +   10 d \norm{x_k-x_*}_{*}^2 \left(\frac{1- \norm{x_k-x_*}_{*}}{1-2 \norm{x_k-x_*}_{*}}\right)^2.
\end{eqnarray*}
Now using the change of norm bound~\eqref{eq:normchangescx} and that\footnote{This follows by taking the derivative
$$ \frac{d}{dt}\left(\frac{t-1}{2 t-1}\right)^2 = 2 \frac{t-1}{2 t-1} \frac{2t-1 - 2(t-1)}{(2t-1)^2}   =
2 \frac{t-1}{2 t-1}  \frac{1}{(2t-1)^2} >0$$ 
since $t < 1/4.$ Thus it is an increasing function and its maximum is at $t=1/4$. 
%$$
%\max_{0 \leq t \leq \frac{1}{4}} \left(\frac{t- 1}{2 t-1}\right)^2  = \left(\frac{\frac{1}{4}-1}{2\frac{1}{4}-1}\right)^2
%= \left(\frac{\frac{3}{4}}{\frac{1}{2}}\right)^2 = \frac{9}{4} < 3.
%$$
} 	
$$ x_k \in \cB_{*}(x_*,1/4) \quad \implies \quad  \left(\frac{1- \norm{x_k-x_*}_{*}}{1-2 \norm{x_k-x_*}_{*}}\right)^2 <2.25,$$
 in the above gives
\begin{eqnarray}
\EE{k}{\norm{\mB_{k+1} - \mH_*^{-1}}_{F(\mH_*)}^2 }
	& \overset{\eqref{eq:normchangescx} }{\leq} & \frac{1-\rho}{(1 - \norm{x_k-x_*}_{*})^2(1-2 \norm{x_k-x_*}_{*})^2}
	\norm{\mB_{k} - \mH_*^{-1}}_{F(\mH_{*})}^2 \nonumber \\
	&& \quad +   23 d \norm{x_k-x_*}_{*}^2.\label{eq:a983a8j38a3}
	\end{eqnarray}

Turning our attention to the Lyapunov function~\eqref{eq:lyapunov}
and combining~\eqref{eq:a983a8j38a3} with~\eqref{eq:iteratescbnd14} gives
\begin{eqnarray*}
\Phi_{\sigma}^{k+1} & \leq &   \sigma\left( \frac{1-\rho}{(1 - \norm{x_k-x_*}_{*})^2(1-2 \norm{x_k-x_*}_{*})^2} +\frac{1}{2\sigma}\right)
	\norm{\mB_{k} - \mH_*^{-1}}_{F(\mH_{*})}^2 + \left(23\sigma d  +5\right) \norm{x_k-x_*}_{*}^2 .
\end{eqnarray*}

Now we have that
\begin{equation}
\norm{x_k -x_*} \;\leq \; \frac{3}{4} - \frac{1}{4}\sqrt{1+ 8\sqrt{\frac{1-\rho}{1-\frac{2}{3}\rho}}} \quad \implies \quad \frac{1-\rho}{(1 - \norm{x_k-x_*}_{*})^2(1-2 \norm{x_k-x_*}_{*})^2} \leq 1-\frac{2\rho}{3},\label{eq:a39jao9j3a9j3}
\end{equation}
which we prove in Lemma~\ref{lem:iteratesbnd} further down.  Consequently, by imposing the constraint on the left hand side of~\eqref{eq:a39jao9j3a9j3} we have that
\begin{eqnarray}
\Phi_{\sigma}^{k+1} & \leq &   \sigma\left( 1-\frac{2\rho}{3}+\frac{1}{2\sigma}\right)
	\norm{\mB_{k} - \mH_*^{-1}}_{F(\mH_{*})}^2 + \left(23\sigma d  +5\right) \norm{x_k-x_*}_{*}^2 .\label{eq:lasttimerqndom}
\end{eqnarray}
Now choosing $\sigma = \frac{3}{ \rho}$ so that 
\[  \sigma\left( 1-\frac{2\rho}{3}+\frac{1}{2\sigma}\right)
	\norm{\mB_{k} - \mH_*^{-1}}_{F(\mH_{*})}^2 =  \sigma\left( 1-\frac{\rho}{2}\right)
	\norm{\mB_{k} - \mH_*^{-1}}_{F(\mH_{*})}^2,\]
%\[ 1-\frac{2\rho}{3}+\frac{1}{2\sigma} = 1- \frac{\rho}{2} \quad \Leftrightarrow \quad \frac{\rho}{12}  =  \frac{1}{\sigma} \]
and further restricting $\norm{x_k -x_*}$ so that
\begin{equation}
 \norm{x_k-x_*}_{*} \leq \frac{\rho}{2} \frac{2-\rho}{69  d  +5 \rho} \quad \implies \quad \left(23\sigma d  +5\right)\norm{x_k-x_*}_{*}^2 \leq \left(1-\frac{\rho}{2}\right)\norm{x_k-x_*}_{*},
\end{equation}
%\rob{
%Something wrong, should be 
%$$t \leq \frac{1}{2} \frac{2-\rho}{23\sigma d  +5}  =  \frac{1}{2} \frac{2-\rho}{2\times 23 \rho^{-1} d  +5}   $$
%}
we have from~\eqref{eq:lasttimerqndom} that
\begin{eqnarray}
\Phi_{\sigma}^{k+1} & \leq &   \sigma\left( 1-\frac{\rho}{2}\right)
	\norm{\mB_{k} - \mH_*^{-1}}_{F(\mH_{*})}^2 +\left(1-\frac{\rho}{2}\right) \norm{x_k-x_*}_{*} 
	 \;= \; \left(1-\frac{\rho}{2}\right) \Phi_{\sigma}^{k}.  \nonumber
\end{eqnarray}
Finally, since $0 \leq \rho \leq 1$ we have that $ \Phi_{\sigma}^{k+1} \leq  \Phi_{\sigma}^{k}$ 
and thus~\eqref{eq:Phiinductproof} holds with $k+1$ in place of $k$,  which proves the induction.
%
%
%\rob{OLD ENDING}
%Now note that 
%\begin{equation*}
%\norm{x_k -x_*} \;\leq \; \frac{1}{2} -\frac{1}{2}\sqrt{\frac{1-\rho}{1- \frac{2\rho}{3}} } \quad \implies \quad \frac{1-\rho}{(1-2 \norm{x_k-x_*}_{*})^2} \leq 1-\frac{2\rho}{3},
%\end{equation*}
% and thus
%\begin{eqnarray*}
%\Phi_{\sigma}^{k+1} & \leq &   \sigma\left(1-
%\frac{2 \rho }{3} +\frac{1}{2\sigma}\right)
%	\norm{\mB_{k} - \mH_*^{-1}}_{F(\mH_{*})}^2 +\left(9\sigma d  +5\right)\norm{x_k-x_*}_{*}^2 .
%\end{eqnarray*}
%Now choosing $\sigma = \frac{3}{7 \rho}$ and further restricting $\norm{x_k -x_*}$ so that
%\begin{equation}
% \norm{x_k-x_*}_{*} \leq \frac{2-\rho}{9 \rho d  +10} \quad \implies \quad \left(9\sigma d  +5\right)\norm{x_k-x_*}_{*}^2 \leq \left(1-\frac{\rho}{2}\right)\norm{x_k-x_*}_{*}.
%\end{equation}
%With this we have that
%\begin{align}
%\E{\Phi_{\sigma}^{k+1}} & \leq   \sigma\left(1-
%\frac{\rho}{2} \right)
%	\norm{\mB_{k} - \mH_*^{-1}}_{F(\mH_{*})}^2 +\left(1-
%\frac{\rho}{2} \right)\norm{x_k-x_*}_{*}^2  = \left(1- \frac{\rho}{2} \right)\Phi_{\sigma}^{k} . \nonumber 
%\end{align}
%%\rob{***END OLD PROOF ****}

\end{proof}

\begin{lemma} \label{lem:iteratesbnd}
For $t<\frac{1}{2}$ and $\rho <1$ we have that  
$$ t <   \frac{3}{4} - \frac{1}{4}\sqrt{1+ 8\sqrt{\frac{1-\rho}{1-\frac{2}{3}\rho}}}\; \implies \;    \frac{1-\rho}{(1 - t)^2(1-2 t)^2}  \leq 1-\frac{2\rho}{3}. $$
\end{lemma}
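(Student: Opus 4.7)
My plan is to reduce this to an elementary quadratic inequality by peeling off the powers on both sides. Since $t < 1/2$ and $\rho < 1$, both $(1-t)(1-2t)$ and $\sqrt{(1-\rho)/(1-\tfrac{2}{3}\rho)}$ are strictly positive, so the conclusion $\frac{1-\rho}{(1-t)^2(1-2t)^2} \leq 1 - \frac{2\rho}{3}$ is equivalent, after cross-multiplying and taking square roots, to
\[
(1-t)(1-2t) \;\geq\; \sqrt{\frac{1-\rho}{1-\tfrac{2}{3}\rho}}.
\]
Set $c \eqdef \sqrt{(1-\rho)/(1-\tfrac{2}{3}\rho)}$; note $c \in (0,1]$ since $1-\tfrac{2}{3}\rho \geq 1-\rho$. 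The desired inequality then reads $2t^2 - 3t + (1-c) \geq 0$.

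Next I would analyze this quadratic in $t$. Its discriminant is $9 - 8(1-c) = 1 + 8c$, so its two real roots are
\[
t_{\pm} \;=\; \frac{3 \pm \sqrt{1+8c}}{4} \;=\; \frac{3 \pm \sqrt{1+8\sqrt{(1-\rho)/(1-\tfrac{2}{3}\rho)}}}{4}.
\]
In particular, the smaller root $t_-$ is precisely the upper bound on $t$ appearing in the hypothesis. Since the leading coefficient $2$ is positive, the quadratic is nonnegative exactly on $(-\infty, t_-] \cup [t_+, \infty)$. The hypothesis places $t$ strictly below $t_-$, so $2t^2 - 3t + (1-c) \geq 0$ holds, which reverses the chain of equivalences to yield the claim.

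The only thing to double-check is that the square-root step is legitimate, i.e.\ that both sides of $(1-t)(1-2t) \geq c$ are nonnegative before squaring; this follows from $t < 1/2$ (giving $1-2t > 0$ and hence the left side positive) and from the boundedness $0 < c \leq 1$. No step here is deep; the main ``trick'' is recognizing that the awkward-looking expression $\tfrac{3}{4} - \tfrac{1}{4}\sqrt{1+8\sqrt{(1-\rho)/(1-\tfrac{2}{3}\rho)}}$ is exactly the smaller root of the natural quadratic obtained after one squaring, so the bound in the hypothesis has been engineered to make the implication tight.
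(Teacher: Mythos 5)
Your proposal is correct and follows essentially the same route as the paper's proof: both clear denominators and take square roots (legitimate since $(1-t)(1-2t)>0$ for $t<\tfrac{1}{2}$ and $1-\tfrac{2}{3}\rho>0$), reduce the claim to the quadratic inequality $2t^2-3t+(1-\psi)\geq 0$ with $\psi=\sqrt{(1-\rho)/(1-\tfrac{2}{3}\rho)}$, and identify the hypothesis bound as the smaller root $\tfrac{3}{4}-\tfrac{1}{4}\sqrt{1+8\psi}$. The only cosmetic difference is the final step, where you invoke the sign pattern of a quadratic outside its roots while the paper instead notes that $p(t)=1-3t+2t^2$ is decreasing for $t<\tfrac{3}{4}$; these are interchangeable one-line observations.
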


\begin{proof}
% First note that
% $$ \frac{1}{(1 - \norm{x_k-x_*}_{*})^2(1-2 \norm{x_k-x_*}_{*})^2} \leq  \max_{0 \leq t \leq \frac{1}{4}}  \frac{1}{(1 - t)^2(1-2 t)^2} $$
Using that $t<1/2$ and $\rho <1$ we have that 
 $$  \frac{1-\rho}{(1 - t)^2(1-2 t)^2}  \leq 1-\frac{2\rho}{3} \Leftrightarrow $$
 $$ 1-\rho \leq  (1 - t)^2(1-2 t)^2 \left(1-\frac{2\rho}{3}   \right)\Leftrightarrow$$ 
 \begin{equation}\label{eq:2nddegreet}
   \sqrt{\frac{1-\rho}{1-\frac{2}{3}\rho}} \leq  (1 - t)(1-2 t) . 
\end{equation}

%% FAILED simplification
%Now since $ \sqrt{\frac{1-\rho}{1-\frac{2}{3}\rho}} \leq 1$ it suffices to choose $t$ such that 
% \begin{equation}\label{eq:2nddegreetr}
%1 \leq  (1 - t)(1-2 t)  \quad \Leftrightarrow \quad   1 \leq 1  -3t +2t^2 \quad \Leftrightarrow \quad 
%3t \leq  2t^2
%\quad \Leftrightarrow \quad  t \geq \frac{3}{2}.
%\end{equation}

 Let $\psi \eqdef \sqrt{\frac{1-\rho}{1-\frac{2}{3}\rho}} .$
% One of the two values of $t$ for which the above holds to equality is given by 
The above holds to \emph{equality} if 
$$  1-\psi -3t +2t^2  = c +bt + a^2 t. $$
  $$\frac{-b \pm \sqrt{ b^2 -4ac}}{2a} = \frac{3 \pm \sqrt{ 9 -8(1-\psi)}}{4} .$$ 
 $$t =\frac{3}{4} - \frac{\sqrt{9-8(1-\psi)}}{4} = \frac{3}{4} - \frac{\sqrt{1+8\psi}}{4}  = \frac{3}{4} - \frac{1}{4}\sqrt{1+ 8\sqrt{\frac{1-\rho}{1-\frac{2}{3}\rho}}} <\frac{3}{4}. $$
Furthermore~\eqref{eq:2nddegreet} holds for all $t \leq \frac{3}{4} -  \frac{\sqrt{1+8\psi}}{4}  .$ This is because $p(t) \eqdef 1 -3t +2t^2$ is decreasing on the interval $t< \frac{3}{4}$ as can  be verified by taking the derivative
$p'(t)  = 4t -3$. 
\end{proof}

\section{Proof of Theorem~\ref{theo:locallinear}}

For this proof we use the Monotonic option in Algorithm~\ref{alg:BFGS}. Furthermore, for the proof we will use the notation
\[x_{+} = x_k - \mB_k\nabla f(x_k).\]

\subsection{Getting ready for the proof}

We first state and prove four lemmas.

\begin{lemma}

	Let $\mW \in \sym^d$.
	For all $x,y\in \R^d$ the following inequality holds:
	\begin{equation}\label{eq:normchange}
		\normfro{\mW}{\mH_{x}} \leq \left(1 + \frac{L_2}{\mu}\norm{x - y}_2\right)\normfro{\mW}{\mH_{y}}.
	\end{equation}
\end{lemma}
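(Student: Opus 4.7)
The plan is to bound $\normfro{\mW}{\mH_x}$ by $\normfro{\mW}{\mH_y}$ multiplied by a factor involving the operator norm of the change-of-metric $\mH_y^{-1/2}\mH_x\mH_y^{-1/2}$, and then to control that operator norm using Lipschitzness of the Hessian (Assumption~\ref{ass:HessLip}) and strong convexity (Assumption~\ref{ass:strconv}).

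First, I would rewrite
\[
\normfro{\mW}{\mH_x} \;=\; \norm{\mH_x^{1/2}\mW\mH_x^{1/2}}_F \;=\; \norm{(\mH_x^{1/2}\mH_y^{-1/2})\,(\mH_y^{1/2}\mW\mH_y^{1/2})\,(\mH_y^{-1/2}\mH_x^{1/2})}_F,
\]
and apply the standard submultiplicative inequality $\norm{ABC}_F\le \norm{A}_2\,\norm{B}_F\,\norm{C}_2$. Since $\norm{\mH_y^{-1/2}\mH_x^{1/2}}_2=\norm{\mH_x^{1/2}\mH_y^{-1/2}}_2$ and $\norm{\mH_x^{1/2}\mH_y^{-1/2}}_2^{2}=\norm{\mH_y^{-1/2}\mH_x\mH_y^{-1/2}}_2$, this collapses to
\[
\normfro{\mW}{\mH_x} \;\le\; \norm{\mH_y^{-1/2}\mH_x\mH_y^{-1/2}}_2 \cdot \normfro{\mW}{\mH_y}.
\]

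Second, I would write $\mH_y^{-1/2}\mH_x\mH_y^{-1/2}=\mI+\mH_y^{-1/2}(\mH_x-\mH_y)\mH_y^{-1/2}$ and apply the triangle inequality together with submultiplicativity of the spectral norm to obtain
\[
\norm{\mH_y^{-1/2}\mH_x\mH_y^{-1/2}}_2 \;\le\; 1 + \norm{\mH_y^{-1}}_2\,\norm{\mH_x-\mH_y}_2.
\]
Strong convexity gives $\mH_y\succeq\mu\mI$ and hence $\norm{\mH_y^{-1}}_2\le 1/\mu$, while the Lipschitz Hessian assumption yields $\norm{\mH_x-\mH_y}_2\le L_2\norm{x-y}_2$. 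Combining these two bounds produces the advertised factor $1+\tfrac{L_2}{\mu}\norm{x-y}_2$, completing the argument.

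This proof is essentially a manipulation and has no real obstacle; the only mild subtlety is recognizing that one can factor a single $\mH_x^{1/2}\mH_y^{-1/2}$ off each side of the Frobenius norm and then re-express $\norm{\mH_x^{1/2}\mH_y^{-1/2}}_2^{2}$ as the spectral norm of the symmetric quantity $\mH_y^{-1/2}\mH_x\mH_y^{-1/2}$, which is what makes the linear (rather than quadratic) bound in $L_2/\mu$ possible.
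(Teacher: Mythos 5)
Your proof is correct and follows essentially the same route as the paper: bound $\normfro{\mW}{\mH_x}$ by $\norm{\mH_y^{-1/2}\mH_x\mH_y^{-1/2}}_2\,\normfro{\mW}{\mH_y}$, split off the identity via $\mI+\mH_y^{-1/2}(\mH_x-\mH_y)\mH_y^{-1/2}$, and finish with $\norm{\mH_y^{-1}}_2\le 1/\mu$ and $\norm{\mH_x-\mH_y}_2\le L_2\norm{x-y}_2$. The only difference is that you explicitly justify the first change-of-metric inequality via the factorization $\norm{ABC}_F\le\norm{A}_2\norm{B}_F\norm{C}_2$, which the paper states without proof.
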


\begin{proof}
	\begin{eqnarray*}
		\normfro{\mW}{\mH_{x}}
		&\leq &
		\normfro{\mW}{\mH_{y}}\norm{\mH_{y}^{-1/2} \mH_{x} \mH_{y}^{-1/2}}_2\\
		&\leq &
		\normfro{\mW}{\mH_{y}}\left(1 + \norm{\mH_{y}^{-1/2} [\mH_{x} - \mH_{y}] \mH_{y}^{-1/2}}_2\right)\\
		&\leq &
		\normfro{\mW}{\mH_{y}}\left(1 + \norm{\mH_{y}^{-1}}_2\norm{\mH_{x} - \mH_{y}}_2\right)\\
		&\leq &
		\normfro{\mW}{\mH_{y}}\left(1 + \mu^{-1}L_2\norm{x - y}_2\right).
	\end{eqnarray*}
\end{proof}

\begin{lemma}

Let $\mH_* \eqdef \nabla^2 f(x_*).$ It follows that \label{lem:Hessstacontract}
	\begin{align*}
	\norm{x_k - x_* - \mH_*^{-1}\nabla f(x_k)}_2 
	& \leq \frac{L_2}{2\mu} \norm{x_k -x_*}^2_2.
	\end{align*}
\end{lemma}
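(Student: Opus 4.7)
The plan is to reduce the estimate to the standard Newton-residual bound using the fundamental theorem of calculus along the segment $[x_*, x_k]$, combined with Assumption~\ref{ass:strconv} (which gives $\norm{\mH_*^{-1}}_2 \le 1/\mu$) and Assumption~\ref{ass:HessLip} (Lipschitz Hessian).

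First, I would exploit $\nabla f(x_*)=0$ to write
\[\nabla f(x_k)=\nabla f(x_k)-\nabla f(x_*)=\int_0^1 \mH_{x_*+t(x_k-x_*)}(x_k-x_*)\,dt.\]
Factoring out $\mH_*^{-1}$ and using $(x_k-x_*)=\mH_*^{-1}\int_0^1 \mH_*(x_k-x_*)\,dt$, I get the exact identity
\[x_k-x_*-\mH_*^{-1}\nabla f(x_k)=\mH_*^{-1}\int_0^1 \bigl(\mH_*-\mH_{x_*+t(x_k-x_*)}\bigr)(x_k-x_*)\,dt.\]

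Next I would take the Euclidean norm, move it inside the integral, use submultiplicativity, and bound each of the three factors separately:
\begin{itemize}
\item $\norm{\mH_*^{-1}}_2\le 1/\mu$ by $\mu$-strong convexity (Assumption~\ref{ass:strconv});
\item $\norm{\mH_*-\mH_{x_*+t(x_k-x_*)}}_2\le L_2\,t\,\norm{x_k-x_*}_2$ by the $L_2$-Lipschitz Hessian (Assumption~\ref{ass:HessLip});
\item $\norm{x_k-x_*}_2$ appears directly.
\end{itemize}
Putting these together and integrating $\int_0^1 t\,dt=1/2$ yields
\[\norm{x_k-x_*-\mH_*^{-1}\nabla f(x_k)}_2\le \frac{1}{\mu}\cdot\frac{L_2}{2}\norm{x_k-x_*}_2\cdot\norm{x_k-x_*}_2=\frac{L_2}{2\mu}\norm{x_k-x_*}_2^2,\]
which is the desired inequality.

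There is no real obstacle here; the only subtlety is remembering to introduce the identity $\mI=\mH_*^{-1}\mH_*$ inside the integrand so that the difference $\mH_*-\mH_{x_*+t(x_k-x_*)}$ appears and the Lipschitz Hessian assumption can be applied. Everything else is a routine one-line integration.
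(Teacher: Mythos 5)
Your proposal is correct and follows essentially the same route as the paper's proof: both write $\nabla f(x_k)=\int_0^1 \mH_{x_*+t(x_k-x_*)}(x_k-x_*)\,dt$ via the fundamental theorem of calculus, factor out $\mH_*^{-1}$ to expose the difference $\mH_*-\mH_{x_*+t(x_k-x_*)}$, and then apply $\norm{\mH_*^{-1}}_2\le 1/\mu$ (strong convexity) together with the $L_2$-Lipschitz Hessian bound and the integration $\int_0^1 t\,dt=1/2$. There is no gap; your argument matches the paper's proof step for step.
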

\begin{proof}
	By the fundamental theorem of calculus,
	\begin{equation}\label{eq:bi87fg8d} \nabla f(x_k) = \nabla f(x_k)-\nabla f(x_*) =\int_{0}^1\nabla^2 f(x_* + t(x_k-x_*))(x_k -x_*) dt.\end{equation}
	The result now follows from
	\begin{eqnarray*}
	\norm{x_k-x_*-\mH_*^{-1} \nabla f(x_k)}_2 &\overset{\eqref{eq:bi87fg8d}}{=} &
	\norm{\mH_*^{-1} \left(\mH_*-\int_{0}^1\nabla^2 f(x_* + t(x_k-x_*) ) dt\right)(x_k -x_*)}_2 \\
	&\leq & \norm{\mH_*^{-1}}_2 \norm{\mH_*-\int_{0}^1 \nabla^2 f(x_* + t(x_k-x_*)) dt}_2  \norm{x_k -x_*}_2 \\
	&=& \norm{\mH_*^{-1}}_2 \norm{\int_{0}^1  \left[\mH_*-\nabla^2 f(x_* + t(x_k-x_*)) \right]dt}_2  \norm{x_k -x_*}_2 \\
	& \leq & \norm{\mH_*^{-1}}_2 \left[\int_{0}^1\norm{\mH_*-\nabla^2 f(x_* + t(x_k-x_*)) }_2dt \right] \norm{x_k -x_*}_2  \\
	& \overset{\eqref{eq:HessLip}+\eqref{eq:strconv}}{\leq} & \frac{L_2}{\mu}  \left(\int_{0}^1\norm{t(x_k-x_*)}_2 dt \right)\norm{x_k -x_*}_2  \\
	& = & \frac{L_2}{2\mu}  \norm{x_k -x_*}_2^2.
	\end{eqnarray*}
\end{proof}

\begin{lemma}\label{lem:sqrtfbnd}
	\begin{equation}\label{eq:sqrtfbnd}
	\sqrt{f(x_+) - f(x_*)}
	\leq
	\frac{\sqrt{2L_1} L_2}{\mu^2}  \left(f(x_k) - f(x_*)\right)
	+
	\frac{ L_1^{5/2}}{2\sqrt{2}\mu L_2}\norm{ \mB_k-\mH_*^{-1} }_{F(\mH_k)}^2.
	\end{equation}
\end{lemma}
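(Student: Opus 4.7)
The plan is to first reduce to an $L_2$-norm bound on $x_+ - x_*$ via $L_1$-smoothness, then split the iteration error into a pure Newton-step residual (controlled by Lemma~\ref{lem:Hessstacontract}) and a quasi-Newton-matrix error (controlled by the weighted Frobenius norm), and finally convert back to function values via strong convexity plus a single, carefully tuned application of Young's inequality.

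First, since $\nabla f(x_*)=0$ and $\nabla f$ is $L_1$-Lipschitz, standard descent gives $f(x_+) - f(x_*) \leq \tfrac{L_1}{2}\norm{x_+-x_*}_2^2$, so it is enough to bound $\norm{x_+-x_*}_2$ and scale by $\sqrt{L_1/2}$. I would decompose
\[
x_+ - x_* \;=\; \bigl(x_k - x_* - \mH_*^{-1}\nabla f(x_k)\bigr) \;+\; (\mH_*^{-1} - \mB_k)\nabla f(x_k).
\]
Lemma~\ref{lem:Hessstacontract} immediately bounds the first summand by $\tfrac{L_2}{2\mu}\norm{x_k-x_*}_2^2$. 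For the second summand, inserting $\mH_k^{-1/2}\mH_k^{1/2}$ on each side of $\mB_k - \mH_*^{-1}$ and applying submultiplicativity together with $\norm{\cdot}_2 \leq \norm{\cdot}_F$ gives
\[
\norm{(\mB_k-\mH_*^{-1})\nabla f(x_k)}_2 \;\leq\; \norm{\mH_k^{-1/2}}_2 \, \normfro{\mB_k-\mH_*^{-1}}{\mH_k} \, \norm{\mH_k^{-1/2}\nabla f(x_k)}_2 .
\]
Using $\mH_k \succeq \mu\mI$ for the outer factors and $\norm{\nabla f(x_k)}_2 = \norm{\nabla f(x_k)-\nabla f(x_*)}_2 \leq L_1\norm{x_k-x_*}_2$ for the third, this is at most $\tfrac{L_1}{\mu}\normfro{\mB_k-\mH_*^{-1}}{\mH_k}\norm{x_k-x_*}_2$.

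Combining both bounds and multiplying by $\sqrt{L_1/2}$ yields
\[
\sqrt{f(x_+)-f(x_*)} \;\leq\; \frac{L_2\sqrt{L_1/2}}{2\mu}\norm{x_k-x_*}_2^2 \;+\; \frac{L_1\sqrt{L_1/2}}{\mu}\normfro{\mB_k-\mH_*^{-1}}{\mH_k}\norm{x_k-x_*}_2.
\]
Strong convexity then supplies $\norm{x_k-x_*}_2^2 \leq \tfrac{2}{\mu}(f(x_k)-f(x_*))$ for the quadratic term (contributing $\tfrac{L_2\sqrt{L_1}}{\sqrt{2}\mu^2}(f(x_k)-f(x_*))$) and $\norm{x_k-x_*}_2 \leq \sqrt{2/\mu}\sqrt{f(x_k)-f(x_*)}$ for the cross term, which becomes $\tfrac{L_1^{3/2}}{\mu^{3/2}}\normfro{\mB_k-\mH_*^{-1}}{\mH_k}\sqrt{f(x_k)-f(x_*)}$. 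Splitting this product by Young's inequality $ab \leq \tfrac{\epsilon}{2}a^2 + \tfrac{1}{2\epsilon}b^2$ with the tuning $\epsilon = \tfrac{L_1\sqrt{\mu}}{\sqrt{2}L_2}$ is arranged so that the $a^2$-coefficient lands exactly at $\tfrac{L_1^{5/2}}{2\sqrt{2}\mu L_2}$; the reciprocal branch contributes another $\tfrac{L_2\sqrt{L_1}}{\sqrt{2}\mu^2}$ to the $f(x_k)-f(x_*)$ coefficient, and the two contributions sum to $\tfrac{\sqrt{2L_1}\,L_2}{\mu^2}$, matching the claim.

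The only delicate point is the calibration of $\epsilon$ in Young's inequality, which is forced uniquely by the target form of the bound; all remaining ingredients are textbook combinations of $L_1$-smoothness, $\mu$-strong convexity, $\norm{\cdot}_2 \leq \norm{\cdot}_F$, submultiplicativity of the induced norm, and Lemma~\ref{lem:Hessstacontract}.
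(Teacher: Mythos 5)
Your proof is correct and follows essentially the same route as the paper: the identical decomposition $x_+ - x_* = \bigl(x_k - x_* - \mH_*^{-1}\nabla f(x_k)\bigr) + (\mH_*^{-1} - \mB_k)\nabla f(x_k)$, Lemma~\ref{lem:Hessstacontract} for the Newton residual, smoothness and strong convexity to pass between iterates and function values, and a calibrated Young's inequality. The only difference is cosmetic ordering: the paper applies Young's inequality with $c = \tfrac{L_2}{\mu L_1}$ at the level of $\norm{x_+ - x_*}_2$ and only then converts $\norm{\mH_*^{-1}-\mB_k}_2$ to the $F(\mH_k)$-norm, whereas you convert norms first and apply Young's with $\epsilon = \tfrac{L_1\sqrt{\mu}}{\sqrt{2}L_2}$ after passing to function values --- both calibrations yield exactly the stated constants.
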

\begin{proof}
	We start with an upper-bound for $\norm{x_+ - x_*}_2$:
	\begin{eqnarray*}	
	\norm{x_+ - x_*}_2
	&=&
	\norm{x_k - x_* - \mB_k\nabla f(x_k)}_2\\
	&=&
	\norm{x_k - x_* - \mH_*^{-1}\nabla f(x_k)  + (\mH_*^{-1} - \mB_k)\nabla f(x_k)}_2 \\
	&\leq&
	\norm{x_k - x_* - \mH_*^{-1}\nabla f(x_k)}_2
	+
	\norm{(\mH_*^{-1} - \mB_k)\nabla f(x_k)}_2\\
	&\overset{\text{Lemma}~\ref{lem:Hessstacontract} +\eqref{eq:GradLip} }{\leq}&
	\frac{L_2}{2\mu}\norm{x_k - x_*}_2^2
	+
	L_1\norm{\mH_*^{-1} - \mB_k}_2\norm{x_k - x_*}_2.
	\end{eqnarray*}	
	Now using that $ab \leq \frac{c}{2} a^2 + \frac{1}{2c} b^2$ for every $a,b,c>0$ with $a = \norm{x_k - x_*}_2,$	$b = \norm{\mH_*^{-1} - \mB_k}_2$ and $c = \frac{L_2}{\mu L_1}$ in the above gives
	\begin{eqnarray*}
	\norm{x_+ - x_*}_2&\leq &
	\frac{L_2}{\mu}\norm{x_k - x_*}_2^2
	+
	\frac{\mu L_1^2}{2L_2}\norm{\mH_*^{-1} - \mB_k}_2^2\\
	&\leq &
	\frac{L_2}{\mu}\norm{x_k - x_*}_2^2
	+
	\frac{\mu L_1^2}{2L_2}\norm{\mH_*^{-1} - \mB_k}_{F(\mH_k)}^2\norm{\mH_k^{-1}}_2^2\\
	&\leq &
	\frac{L_2}{\mu}\norm{x_k - x_*}_2^2
	+
	\frac{ L_1^2}{2\mu L_2}\norm{\mH_*^{-1} - \mB_k}_{F(\mH_k)}^2.
	\end{eqnarray*}
	Using smoothness~\eqref{eq:GradLip}, the above inequalitym and then strong convexity~\eqref{eq:strconv} in that order, we have that
	\begin{eqnarray*}
	\sqrt{f(x_+) - f(x_*)}
	& \leq &
	\sqrt{\frac{L_1}{2}} \norm{x_+ - x_*}_2 \\
	 &\leq &
	\sqrt{\frac{L_1}{2}} \frac{L_2}{\mu}\norm{x_k - x_*}_2^2
	+
	\frac{ L_1^{5/2}}{2\sqrt{2}\mu L_2}\norm{\mH_*^{-1} - \mB_k}_{F(\mH_k)}^2\\
	& \leq &
	\frac{\sqrt{2L_1} L_2}{\mu^2}  \left(f(x_k) - f(x_*)\right)
	+
	\frac{ L_1^{5/2}}{2\sqrt{2}\mu L_2}\norm{\mH_*^{-1} - \mB_k}_{F(\mH_k)}^2.
	\end{eqnarray*}
\end{proof}

\begin{lemma} \label{lem:Bkplus1contract1}
	\begin{equation}\label{eq:Bkplus1contract1}
\EE{k}{\norm{\mB_{k+1} - \mH_*^{-1}}_{F(\mH_k)}^2}
	\leq
	(1-\rho)
	\norm{\mB_k - \mH_*^{-1}}_{F(\mH_k)}^2
	+
	\frac{2dL_2^2}{\mu^3} (f(x_k) - f(x_*)).
	\end{equation}
\end{lemma}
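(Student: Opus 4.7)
The proof will follow the same structural template as Lemma~\ref{lem:Bkplus1contract12}, with only the final step adapted to use strong convexity and Lipschitz Hessian in place of self-concordance.

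First I would reuse verbatim the algebraic decomposition from the proof of Lemma~\ref{lem:Bkplus1contract12}. Namely, define the projection $\mZ_k \eqdef \mH_k^{1/2}\mS_k(\mS_k^\top \mH_k \mS_k)^{-1}\mS_k^\top \mH_k^{1/2}$, the operator $P(\mA) \eqdef (\mI-\mZ_k)\mA(\mI-\mZ_k)$, and the symmetric matrices $\mR_k \eqdef \mH_k^{1/2}(\mB_k - \mH_*^{-1})\mH_k^{1/2}$ and $\mD_k \eqdef \mH_k^{1/2}(\mH_*^{-1} - \mH_k^{-1})\mH_k^{1/2}$. Using the identity $\mZ_k = \mI - P(\mI)$, the weighted Frobenius norm decomposes as
\[
\norm{\mB_{k+1}-\mH_*^{-1}}_{F(\mH_k)}^2 \;=\; \norm{P(\mR_k)}_F^2 + \norm{P(\mD_k)-\mD_k}_F^2 + 2\dotprod{P(\mR_k),P(\mD_k)-\mD_k},
\]
and the cross term vanishes by exactly the same calculation as before.

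Next I would bound the two remaining terms exactly as in Lemma~\ref{lem:Bkplus1contract12}. Taking conditional expectation and using $\trace{\mA\mB}\geq \lambda_{\min}(\mA)\trace{\mB}$ for symmetric PSD matrices gives $\EE{k}{\norm{P(\mR_k)}_F^2}\leq (1-\rho)\norm{\mB_k-\mH_*^{-1}}_{F(\mH_k)}^2$, where $\rho$ is as in~\eqref{eq:rho}. Discarding the non-negative trace term in $(II)$ yields $\norm{P(\mD_k)-\mD_k}_F^2 \leq \norm{\mD_k}_F^2 = \norm{\mH_k-\mH_*}_{F(\mH_*^{-1})}^2$. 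Combining these gives
\[
\EE{k}{\norm{\mB_{k+1}-\mH_*^{-1}}_{F(\mH_k)}^2} \;\leq\; (1-\rho)\,\norm{\mB_k-\mH_*^{-1}}_{F(\mH_k)}^2 + \norm{\mH_k-\mH_*}_{F(\mH_*^{-1})}^2.
\]

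The only genuinely new step, and the one that departs from Lemma~\ref{lem:Bkplus1contract12}, is bounding the residual $\norm{\mH_k-\mH_*}_{F(\mH_*^{-1})}^2$ using the new assumptions. I would chain three standard inequalities: the Frobenius-to-spectral bound $\norm{\cdot}_F^2\leq d\norm{\cdot}_2^2$, Assumption~\ref{ass:HessLip}, and Assumption~\ref{ass:strconv}. Concretely,
\[
\norm{\mH_k-\mH_*}_{F(\mH_*^{-1})}^2 \leq d\,\norm{\mH_*^{-1}}_2^2 \norm{\mH_k-\mH_*}_2^2 \leq \frac{d L_2^2}{\mu^2}\,\norm{x_k-x_*}_2^2 \leq \frac{2 d L_2^2}{\mu^3}\bigl(f(x_k)-f(x_*)\bigr),
\]
where the last step uses the standard consequence $\tfrac{\mu}{2}\norm{x_k-x_*}_2^2 \leq f(x_k)-f(x_*)$ of $\mu$-strong convexity. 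Substituting this into the previous display yields~\eqref{eq:Bkplus1contract1}.

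The main obstacle, if any, is essentially bookkeeping: making sure the switch of weighting in the Frobenius norm ($F(\mH_k)$ on both sides of the recurrence, $F(\mH_*^{-1})$ for the residual) is handled cleanly. Since the target norm on both sides of the inequality is $F(\mH_k)$, no norm-change lemma is needed here, which makes this lemma simpler than its self-concordant counterpart; the remaining change-of-norm step is deferred to wherever Lemma~\ref{lem:Bkplus1contract1} is used in the overall proof of Theorem~\ref{theo:locallinear}.
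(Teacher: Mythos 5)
Your proposal is correct and matches the paper's proof essentially step for step: the paper likewise recycles the decomposition $\EE{k}{\norm{\mB_{k+1} - \mH_*^{-1}}_{F(\mH_k)}^2} \leq (1-\rho)\norm{\mB_k - \mH_*^{-1}}_{F(\mH_k)}^2 + \normfro{\mH_k - \mH_*}{\mH_*^{-1}}^2$ from the self-concordant lemma and then bounds the residual via $\norm{\cdot}_F^2 \leq d \norm{\cdot}_2^2$, the Lipschitz-Hessian and strong-convexity assumptions, and $\frac{\mu}{2}\norm{x_k - x_*}_2^2 \leq f(x_k) - f(x_*)$, exactly as you do. Your observation that no change-of-norm lemma is needed inside this lemma (it is deferred to the proof of Theorem~\ref{theo:locallinear}) is also consistent with the paper.
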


\begin{proof}
	Recalling~\eqref{eq:s9dfhdhsfs} we have that
	
		\begin{equation}\label{eq:s9dfhdhsfs22}
	\EE{k}{\norm{\mB_{k+1} - \mH_*^{-1}}_{F(\mH_k)}^2 }
	\leq
	(1-\rho)
	\norm{\mB_k - \mH_*^{-1}}_{F(\mH_k)}^2
	+\normfro{\mH_k - \mH_*}{\mH_*^{-1}}^2.
	\end{equation}	
Now note that
	\begin{eqnarray}
	\norm{\mH_*^{-1/2} (\mH_k - \mH_*)\mH_*^{-1/2}}_F^2
	& \leq &
	d \norm{\mH_*^{-1}}_2^2 \norm{\mH_k - \mH_*}_2^2 \nonumber \\
	&\overset{\eqref{eq:strconv} + \eqref{eq:HessLip} }{\leq} &
	\frac{dL_2^2}{\mu^2} \norm{x_k - x_*}_2^2. \label{eq:IIbnd}
	\end{eqnarray}

	Finally, taking expectation over~\eqref{eq:s9dfhdhsfs22} and using the above gives
	\begin{eqnarray*}
	\EE{k}{\norm{\mB_{k+1} - \mH_*^{-1}}_{F(\mH_k)}^2}
	& \leq &
	(1-\rho)
	\norm{\mB_k - \mH_*^{-1}}_{F(\mH_k)}^2
	+
	\frac{dL_2^2}{\mu^2} \norm{x_k - x_*}_2^2\\
	&\leq &
	(1-\rho)
	\norm{\mB_k - \mH_*^{-1}}_{F(\mH_k)}^2
	+
	\frac{2dL_2^2}{\mu^3} (f(x_k) - f(x_*)).
	\end{eqnarray*}
\end{proof}

Now, we consider the following Lyapunov function:
\begin{equation}
\Psi_k = \sqrt{f(x_k) - f(x_*)} + \beta \norm{\mB_k - \mH_*^{-1}}_{F(\mH_*)}^2,
\end{equation}
where  $\beta = \frac{4\sqrt{2}L_1^{5/2}}{\mu L_2 \rho}$.

\subsection{The Proof}

Having established several key lemmas, we are now ready to prove Theorem~\ref{theo:locallinear}.

\begin{proof}
	\begin{eqnarray*}
		\E{\Psi_{k+1}}
		&= &
		\sqrt{f(x_{k+1}) - f(x_*)} + \beta \E{\norm{\mB_{k+1} - \mH_*^{-1}}_{F(\mH_*)}^2}\\
		&\leq &
		\sqrt{f(x_+) - f(x_*)} + \beta \E{\norm{\mB_{k+1} - \mH_*^{-1}}_{F(\mH_*)}^2}\\
		&\overset{\eqref{eq:sqrtfbnd}+\eqref{eq:normchange}}{\leq} &
		\frac{\sqrt{2L_1}L_2}{\mu^2} (f(x_k) - f(x_*))
		+
		\frac{ L_1^{5/2}}{2\sqrt{2}\mu L_2}\norm{ \mB_k-\mH_*^{-1} }_{F(\mH_k)}^2 \\
		&& \qquad +
		\beta\left[1 + \frac{L_2}{\mu}\norm{x_k - x_*}_2\right]^2 \E{\norm{\mB_{k+1} - \mH_*^{-1}}_{F(\mH_k)}^2} \\
		&\overset{\eqref{eq:Bkplus1contract1}}{\leq} &
		\frac{\sqrt{2L_1}L_2}{\mu^2} (f(x_k) - f(x_*))
		+
		\frac{ L_1^{5/2}}{2\sqrt{2}\mu L_2}\norm{ \mB_k-\mH_*^{-1} }_{F(\mH_k)}^2 \\
		&& \qquad +
		\beta\left[1 + \frac{L_2}{\mu}\norm{x_k - x_*}_2\right]^2 \left[
			(1-\rho)
			\norm{\mB_k - \mH_*^{-1}}_{F(\mH_k)}^2
			+
			\frac{2dL_2^2}{\mu^3} (f(x_k) - f(x_*))
		\right] \\
		&\leq &
		\left[
			\frac{\sqrt{2L_1}L_2}{\mu^2}  + \frac{2\beta dL_2^2}{\mu^3}\left[1 + \frac{L_2}{\mu}\norm{x_k - x_*}_2\right]^2
		\right](f(x_k) - f(x_*)) \\
		&& \qquad +
		\left[
			(1-\rho)\left[1 + \frac{L_2}{\mu}\norm{x_k - x_*}_2\right]^4 
			+
			\frac{ L_1^{5/2}}{2\beta\sqrt{2}\mu L_2}\left[1 + \frac{L_2}{\mu}\norm{x_k - x_*}_2\right]^2
		\right]\beta\norm{\mB_k - \mH_*^{-1}}_{F(\mH_*)}^2.
	\end{eqnarray*}
	Now, since $f(x_k) -f(x_*)$ is non-increasing by line~\ref{ln:monotonic} in Algorithm~\ref{alg:BFGS}, we have that $\norm{x_k - x_*}_2 \leq \sqrt{\frac{2}{\mu}(f(x_k) - f(x_*))} \leq  \sqrt{\frac{2}{\mu}(f(x_0) - f(x_*))} \leq \sqrt{\frac{2\cF}{\mu}}$:
	\begin{eqnarray*}
	\E{\Psi_{k+1}}
	&\leq &
	\left[
	\frac{\sqrt{2L_1}L_2}{\mu^2}  + \frac{2\beta dL_2^2}{\mu^3}\left[1 + \frac{\sqrt{2\cF}L_2}{\mu^{3/2}}\right]^2
	\right](f(x_k) - f(x_*))\\
	&& \qquad +
	\left[
	(1-\rho)\left[1 + \frac{\sqrt{2\cF}L_2}{\mu^{3/2}}\right]^4
	+
	\frac{ L_1^{5/2}}{2\beta\sqrt{2}\mu L_2}\left[1 + \frac{\sqrt{2\cF}L_2}{\mu^{3/2}}\right]^2
	\right]\beta\norm{\mB_k - \mH_*^{-1}}_{F(\mH_*)}^2.
	\end{eqnarray*}
	Equation \eqref{eq:1} implies  $\frac{\sqrt{2\cF}L_2}{\mu^{3/2}} \leq 1$ and we get
	\begin{eqnarray*}
	\E{\Psi_{k+1}}
	&\leq &
	\left[
	\frac{\sqrt{2L_1}L_2}{\mu^2}  + \frac{8\beta dL_2^2}{\mu^3}
	\right](f(x_k) - f(x_*)) \\
	&& \quad +
	\left[
	(1-\rho)\left[1 + \frac{15\sqrt{2\cF}L_2}{\mu^{3/2}}\right]
	+
	\frac{ \sqrt{2}L_1^{5/2}}{\beta\mu L_2}
	\right]\beta\norm{\mB_k - \mH_*^{-1}}_{F(\mH_*)}^2\\
	&\leq &
	\left[
	\frac{\sqrt{2L_1}L_2}{\mu^2}  + \frac{8\beta dL_2^2}{\mu^3}
	\right](f(x_k) - f(x_*)) \\	
	&& \quad +
	\left[
	1-\rho +  \frac{15\sqrt{2\cF} L_2}{\mu^{3/2}}
	+
	\frac{ \sqrt{2}L_1^{5/2}}{\beta\mu L_2}
	\right]\beta\norm{\mB_k - \mH_*^{-1}}_{F(\mH_*)}^2.
	\end{eqnarray*}
	Using $\beta = \frac{4\sqrt{2}L_1^{5/2}}{\mu L_2 \rho}$ and using $\frac{15\sqrt{2\cF} L_2}{\mu^{3/2}} \leq\frac{\rho}{4}$, which follows from \eqref{eq:1}, we get
	\begin{align*}
	\E{\Psi_{k+1}}
	\leq
	\left[
	\frac{\sqrt{2L_1}L_2}{\mu^2}  + \frac{32\sqrt{2}dL_1^{5/2}L_2}{\rho\mu^4}
	\right]\sqrt{\cF\cdot (f(x_k) - f(x_*))}+
	\left(1- \frac{\rho}{2}\right)\beta\norm{\mB_k - \mH_*^{-1}}_{F(\mH_*)}^2.
	\end{align*}
	Finally, using  \eqref{eq:1} we get
	\begin{equation}
	\E{\Psi_{k+1}}
	\leq
	\frac{1}{2}\sqrt{(f(x_k) - f(x_*))}
	+
	\left(1- \frac{\rho}{2}\right)\beta\norm{\mB_k - \mH_*^{-1}}_{F(\mH_*)}^2 \leq
	\left(1- \frac{\rho}{2}\right)\Psi_k.
	\end{equation}

\end{proof}

\section{Proof of Theorem~\ref{thm:superlinear} }

\begin{lemma}\label{lem:1}
	$\sqrt{\frac{f(x_{k+1}) - f(x_*)}{f(x_k) - f(x_*)}}$ converges to 0 with probability 1.
\end{lemma}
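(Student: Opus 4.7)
Starting from Lemma~\ref{lem:sqrtfbnd} and using the monotonic option (so that $f(x_{k+1})\le f(x_+)$), and assuming $f(x_k)>f(x_*)$ (otherwise the ratio is zero and the claim is trivial), the plan is to divide through by $\sqrt{f(x_k)-f(x_*)}$ to obtain
\begin{equation*}
\sqrt{\frac{f(x_{k+1})-f(x_*)}{f(x_k)-f(x_*)}}
\;\leq\;
\frac{\sqrt{2L_1}\,L_2}{\mu^2}\sqrt{f(x_k)-f(x_*)}
\;+\;
\frac{L_1^{5/2}}{2\sqrt{2}\,\mu L_2}\cdot
\frac{\norm{\mB_k-\mH_*^{-1}}_{F(\mH_k)}^2}{\sqrt{f(x_k)-f(x_*)}}.
\end{equation*}
The task then reduces to showing that each of the two summands on the right vanishes almost surely as $k\to\infty$.

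The first summand is the easier half. Theorem~\ref{theo:locallinear} gives $\mathbb{E}_k[\Psi_{k+1}]\le(1-\rho/2)\,\Psi_k$, so $\Psi_k/(1-\rho/2)^k$ is a nonnegative supermartingale, which by Doob's convergence theorem tends almost surely to a finite limit. In particular $\Psi_k\to 0$ almost surely, forcing $\sqrt{f(x_k)-f(x_*)}\to 0$ almost surely and killing the first summand.

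For the second summand, the task is to show that the matrix error $\norm{\mB_k-\mH_*^{-1}}_{F(\mH_k)}^2$ decays faster than $\sqrt{f(x_k)-f(x_*)}$ almost surely. The lever here is the stronger contraction inequality in Lemma~\ref{lem:Bkplus1contract1},
\begin{equation*}
\mathbb{E}_k[\norm{\mB_{k+1}-\mH_*^{-1}}_{F(\mH_k)}^2]
\;\leq\;
(1-\rho)\,\norm{\mB_k-\mH_*^{-1}}_{F(\mH_k)}^2
\;+\;
\tfrac{2dL_2^2}{\mu^3}\bigl(f(x_k)-f(x_*)\bigr),
\end{equation*}
whose contraction factor $(1-\rho)$ is strictly smaller than the Lyapunov rate $(1-\rho/2)^2$. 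Combined with the a.s.\ bound $f(x_k)-f(x_*)=O((1-\rho/2)^{2k})$ coming from the previous step, a perturbed-supermartingale (Robbins--Siegmund-style) argument applied to $\norm{\mB_k-\mH_*^{-1}}_{F(\mH_k)}^2/(1-\rho/2)^{2k}$ should yield $\norm{\mB_k-\mH_*^{-1}}_{F(\mH_k)}^2 = O((1-\rho/2)^{2k})$ almost surely, i.e.\ the matrix error and $f(x_k)-f(x_*)$ decay at the \emph{same} a.s.\ rate.

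The hard part is that these estimates only compare \emph{upper} bounds: on a sample path where $\sqrt{f(x_k)-f(x_*)}$ happens to decay strictly faster than $(1-\rho/2)^k$, the ratio might still fail to vanish. To close the argument I would bootstrap using Lemma~\ref{lem:sqrtfbnd} itself on each sample path: whenever the matrix error dominates $u_k^2 = f(x_k)-f(x_*)$, the bound $u_{k+1}\leq C_1 u_k^2+C_2 v_k$ forces $u_{k+1}$ to be of order $v_k$, and the improved contraction of $v_k$ then transfers the faster rate back onto $\sqrt{f(x_k)-f(x_*)}$ in subsequent iterations. Combining this dichotomy with monotonicity of $f(x_k)-f(x_*)$ provided by line~\ref{ln:monotonic} is where I expect the proof really has to do work; everything else is a direct consequence of the two recursions already established for Theorem~\ref{theo:locallinear}.
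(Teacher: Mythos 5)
There is a genuine gap, and it sits exactly where you flagged it. Your reduction starts from Lemma~\ref{lem:sqrtfbnd}, but that lemma is the wrong starting point: it has already applied Young's inequality to the cross term, decoupling the matrix error from $\sqrt{f(x_k)-f(x_*)}$. After dividing, you are stuck with $\norm{\mB_k-\mH_*^{-1}}_{F(\mH_k)}^2\big/\sqrt{f(x_k)-f(x_*)}$, and controlling this requires a pathwise \emph{lower} bound on the function gap relative to the matrix error. Every recursion established for Theorem~\ref{theo:locallinear} is an upper bound in (conditional) expectation; none of them rules out a sample path on which a lucky sketch drives $f(x_k)-f(x_*)$ far below $\norm{\mB_k-\mH_*^{-1}}_{F(\mH_k)}^2$ while $\mB_k$ remains a fixed distance from $\mH_*^{-1}$, and on such a path your bound on the ratio is of order $v_k/u_k$ (in your notation $u_k \eqdef \sqrt{f(x_k)-f(x_*)}$, $v_k \eqdef \norm{\mB_k-\mH_*^{-1}}_{F(\mH_k)}^2$) and is useless. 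The bootstrap dichotomy does not repair this: $u_{k+1}\leq C_1u_k^2+C_2v_k$ constrains $u_{k+1}$ only from above, so nothing ``transfers the faster rate back'' onto subsequent iterates. In effect your route needs $v_k=o(u_k)$ almost surely, i.e.\ the \emph{squared} matrix error decaying strictly faster than $\sqrt{f(x_k)-f(x_*)}$ --- stronger than anything the Lyapunov analysis provides (both quantities contract at comparable expected rates) and likely false in general. The Robbins--Siegmund step is also delicate as stated, since the a.s.\ bound on $u_k^2$ carries a path-dependent constant that cannot simply be inserted into a conditional expectation; but this is secondary to the denominator problem.

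The paper avoids the problem one step earlier: it re-derives the one-step estimate \emph{without} Young's inequality, keeping the cross term as a product. Bounding $\norm{(\mH_*^{-1}-\mB_k)\nabla f(x_k)}_2 \leq L_1\norm{\mH_*^{-1}-\mB_k}_2\norm{x_k-x_*}_2$ via \eqref{eq:GradLip}, using Lemma~\ref{lem:Hessstacontract} for the Newton-like term, and converting every occurrence of $\norm{x_k-x_*}_2$ into $\sqrt{f(x_k)-f(x_*)}$ via strong convexity \eqref{eq:strconv} gives
\begin{equation*}
\sqrt{f(x_{k+1})-f(x_*)}
\;\leq\;
\frac{L_2\sqrt{L_1}}{\sqrt{2}\mu^2}\left(f(x_k)-f(x_*)\right)
+\frac{L_1^{3/2}}{\mu^{3/2}}\,\normfro{\mH_*^{-1}-\mB_k}{\mH_*}\sqrt{f(x_k)-f(x_*)},
\end{equation*}
so dividing by $\sqrt{f(x_k)-f(x_*)}$ leaves the matrix error to the \emph{first} power with no denominator at all: the ratio is at most $a\Psi_k+b\sqrt{\Psi_k}$. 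Taking full expectations, using the geometric decay of $\E{\Psi_k}$ from Theorem~\ref{theo:locallinear} together with Jensen's inequality for $\sqrt{\cdot}$, yields $\E{\,\cdot\,}\leq cq^k$ with $q\in(0,1)$; Markov's inequality and Borel--Cantelli then give almost sure convergence to zero. No pathwise lower bound on the function gap, no supermartingale machinery, and no rate comparison between $u_k$ and $v_k$ is needed --- your first summand's treatment (supermartingale convergence of $\Psi_k$) is correct but likewise unnecessary once the decomposition is fixed.
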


\begin{proof}
	We start with an upper-bound for $\sqrt{f(x_{k+1}) - f(x_*)}$:
	\begin{eqnarray*}
	\sqrt{f(x_{k+1}) - f(x_*)}
	&\leq &
	\sqrt{f(x_{+}) - f(x_*)} \\
	& \leq &
	\sqrt{\frac{L_1}{2}}\norm{x_+ - x_*}_2\\
	&= &
	\sqrt{\frac{L_1}{2}}\norm{x_k - x_* - \mB_k\nabla f(x_k)}_2\\
	&= &
	\sqrt{\frac{L_1}{2}}\norm{x_k - x_* - \mH_*^{-1}\nabla f(x_k)  + (\mH_*^{-1} - \mB_k)\nabla f(x_k)}_2 \\
	&\leq &
	\sqrt{\frac{L_1}{2}}\norm{x_k - x_* - \mH_*^{-1}\nabla f(x_k)}_2
	+
	\sqrt{\frac{L_1}{2}}\norm{(\mH_*^{-1} - \mB_k)\nabla f(x_k)}_2\\
	&\overset{\text{Lemma}~\ref{lem:Hessstacontract} +\eqref{eq:GradLip}}{\leq} &
	\frac{L_2\sqrt{L_1}}{2\sqrt{2}\mu}\norm{x_k - x_*}_2^2
	+
	\frac{L_1^{3/2}}{\sqrt{2}}\norm{\mH_*^{-1} - \mB_k}_2\norm{x_k - x_*}_2 \\
	&\overset{\eqref{eq:strconv}}{\leq} &
	\frac{L_2\sqrt{L_1}}{\sqrt{2}\mu^2}(f(x_k) - f(x_*))
	+
	\frac{L_1^{3/2}}{\sqrt{\mu}}\norm{\mH_*^{-1} - \mB_k}_2\sqrt{f(x_k) - f(x_*)} \\
	&\leq &
	\frac{L_2\sqrt{L_1}}{\sqrt{2}\mu^2}(f(x_k) - f(x_*))
	+
	\frac{L_1^{3/2}}{\mu^{3/2}}\norm{\mH_*^{-1} - \mB_k}_{F(\mH_{*})}\sqrt{f(x_k) - f(x_*)}.
	\end{eqnarray*}
	This leads to
	\begin{eqnarray*}
	\sqrt{\frac{f(x_{k+1}) - f(x_*)}{f(x_k) - f(x_*)}}
	&\leq &
	\frac{L_2\sqrt{L_1}}{\sqrt{2}\mu^2}\sqrt{f(x_k) - f(x_*)}
	+
	\frac{L_1^{3/2}}{\mu^{3/2}}\norm{\mH_*^{-1} - \mB_k}_{F(\mH_{*})}\\
	&\leq &
	a \Psi_k + b\sqrt{\Psi_k},
	\end{eqnarray*}
	where $a = \frac{L_2\sqrt{L_1}}{\sqrt{2}\mu^2}$ and $b = \frac{L_1^{3/2}}{\mu^{3/2}\sqrt{\beta}}$.
	After taking expectation, we get
	\begin{eqnarray*}
	\E{\sqrt{\frac{f(x_{k+1}) - f(x_*)}{f(x_k) - f(x_*)}}}
	&\leq &
	\E{a \Psi_k + b\sqrt{\Psi_k}} \\
	& \leq &
	a\E{\Psi_k}+b\sqrt{\E{\Psi_k}}\\
	&\leq &
	a(1-\rho)^k\Psi_0 + b\sqrt{(1-\rho)^k\Psi_0}\\
	&\leq & cq^k,
	\end{eqnarray*}
	where $q = \sqrt{1-\rho}\in (0,1)$ and $c = a \Psi_0 + b\sqrt{\Psi_0}$.
	Now, we choose arbitrary $\epsilon$ and apply Markov's inequality:
	\begin{align*}
	\prob{\sqrt{\frac{f(x_{k+1}) - f(x_*)}{f(x_k) - f(x_*)}} \geq  \epsilon} \leq \frac{1}{\epsilon} \E{\sqrt{\frac{f(x_{k+1}) - f(x_*)}{f(x_k) - f(x_*)}}}
	\leq \frac{cq^k}{\epsilon}.
	\end{align*}
	Hence, for all $\epsilon > 0$
	\begin{equation}
	\sum_{k=0}^{\infty}\prob{\sqrt{\frac{f(x_{k+1}) - f(x_*)}{f(x_k) - f(x_*)}} \geq  \epsilon} < \infty,
	\end{equation}
	which means that $\sqrt{\frac{f(x_{k+1}) - f(x_*)}{f(x_k) - f(x_*)}}$ converges to $0$ in probability sufficiently quickly and hence it converges to $0$ almost surely.
	
\end{proof}

\begin{lemma}\label{lem:2}
	$\sqrt{f(x_k) - f(x_*)}$ converges to 0 with probability 1.
\end{lemma}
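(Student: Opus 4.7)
The plan is to deduce almost sure convergence of $\sqrt{f(x_k) - f(x_*)}$ directly from the linear decay of the Lyapunov function $\Psi_k$ established in Theorem~\ref{theo:locallinear}, using Markov's inequality together with the first Borel--Cantelli lemma. The key observation is that by the definition of $\Psi_k$,
\[
\sqrt{f(x_k) - f(x_*)} \;\leq\; \Psi_k,
\]
so it suffices to show $\Psi_k \to 0$ almost surely. This is a standard route for upgrading ``linear convergence in expectation'' to ``almost sure convergence,'' and the geometric rate provided by Theorem~\ref{theo:locallinear} will be more than enough to make the relevant tail probability summable.

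Concretely, I would first iterate the one-step contraction from Theorem~\ref{theo:locallinear} to obtain
\[
\E{\Psi_k} \;\leq\; \left(1 - \tfrac{\rho}{2}\right)^k \Psi_0 .
\]
Next, fix an arbitrary $\varepsilon > 0$ and apply Markov's inequality to the nonnegative random variable $\Psi_k$:
\[
\prob{\Psi_k \geq \varepsilon} \;\leq\; \frac{\E{\Psi_k}}{\varepsilon} \;\leq\; \frac{\Psi_0}{\varepsilon}\left(1 - \tfrac{\rho}{2}\right)^k .
\]
Summing over $k$ gives a convergent geometric series, so $\sum_k \prob{\Psi_k \geq \varepsilon} < \infty$, and the first Borel--Cantelli lemma yields $\prob{\Psi_k \geq \varepsilon \text{ i.o.}} = 0$. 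Taking a countable intersection over $\varepsilon = 1/m$ for $m \in \N$ shows $\Psi_k \to 0$ with probability $1$, and combining with $\sqrt{f(x_k) - f(x_*)} \leq \Psi_k$ finishes the proof.

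There is no real obstacle here; the argument is essentially a textbook deduction once Theorem~\ref{theo:locallinear} is in hand. The only point worth flagging is that the theorem's hypothesis requires $f(x_0) - f(x_*) \leq \cF$, so the superlinear claim of Theorem~\ref{thm:superlinear} inherits the same local-convergence assumption; within that regime the monotonic option on line~\ref{ln:monotonic} keeps $x_k \in \cQ$, so the rate $\rho$ appearing in Theorem~\ref{theo:locallinear} is indeed uniform in $k$ and the geometric decay of $\E{\Psi_k}$ is valid for all $k \geq 0$.
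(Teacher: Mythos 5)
Your proof is correct and takes essentially the same route as the paper's: the paper's terse proof likewise deduces almost sure convergence from the linear decay $\E{\Psi_k} \leq \left(1-\tfrac{\rho}{2}\right)^k \Psi_0$ (hence of $\E{\sqrt{f(x_k)-f(x_*)}}$), with the Markov plus Borel--Cantelli mechanics you spell out being exactly those used explicitly in the proof of Lemma~\ref{lem:1}. Your version simply makes the details explicit, including the correct remark that the hypothesis $f(x_0)-f(x_*) \leq \cF$ and the monotonic option are what make the contraction valid for all $k$.
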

\begin{proof}
	$\E{\sqrt{f(x_k) - f(x_*)}}$ converges linearly to 0, thus it converges in probability to 0 sufficiently quickly. Hence, $\sqrt{f(x_k) - f(x_*)}$ converges to 0 almost surely.
\end{proof}

%%%%%%%%%%%%%%

%\section{Proof of Theorem~\ref{prop:svd_sketch}}

\section{Proof of Theorem~\ref{prop:svd_sketch_log}}
\begin{proof}
The proof follows verbatim the proof of Theorem~\ref{prop:svd_sketch}.
\end{proof}

\end{document}